\DeclareMathOperator{\sectional}{sec}
\DeclareMathOperator{\Rico}{Ric_o}
\def\cleardoublepage{\clearpage\if@twoside \ifodd\c@page\else%
         \hbox{}%
     \thispagestyle{empty}%              % Empty header styles
     \newpage%
     \if@twocolumn\hbox{}\newpage\fi\fi\fi}
\theoremstyle{plain}
\newtheorem{theorem}{Theorem}[section]
\newtheorem{definition}[theorem]{Definition}
\newtheorem{lemma}[theorem]{Lemma}
\newtheorem{proposition}[theorem]{Proposition}
\newtheorem{remark}[theorem]{Remark}
\numberwithin{equation}{section}
\theoremstyle{definition}
\newcommand{\R}{\ensuremath{\mathbb{R}}}
\begin{document}

\title[Smoothing effects and infinite time blowup for reaction-diffusion equations]{Smoothing effects and infinite time blowup\\ for reaction-diffusion equations:\\ an approach via Sobolev and Poincar\'e inequalities}

\author{Gabriele Grillo}
\address{\hbox{\parbox{5.7in}{\medskip\noindent{Dipartimento di Matematica,\\
Politecnico di Milano,\\
   Piazza Leonardo da Vinci 32, 20133 Milano, Italy.
   \\[3pt]
        \em{E-mail address: }{\tt
          gabriele.grillo@polimi.it\\ %\it * Corresponding author
          }}}}}%\thanks{\it Corresponding author\rm: Gabriele Grillo}

\author{Giulia Meglioli}
\address{\hbox{\parbox{5.7in}{\medskip\noindent{Dipartimento di Matematica,\\
Politecnico di Milano,\\
   Piazza Leonardo da Vinci 32, 20133 Milano, Italy.
   \\[3pt]
        \em{E-mail address: }{\tt
          giulia.meglioli@polimi.it%\\ \it Corresponding author
          }}}}}

\author{Fabio Punzo}
\address{\hbox{\parbox{5.7in}{\medskip\noindent{Dipartimento di Matematica,\\
Politecnico di Milano,\\
   Piazza Leonardo da Vinci 32, 20133 Milano, Italy. \\[3pt]
        \em{E-mail address: }{\tt
          fabio.punzo@polimi.it}}}}}

\subjclass[2010]{Primary: 35K57. Secondary: 35B44, 58J35, 35K65, 35R01.}
\keywords{Reaction diffusion equations. Riemannian manifolds. Blow-up. Global existence. Diffusions with weights.}

\maketitle

\maketitle              % typeset the title of the contribution

\begin{abstract} We consider reaction-diffusion equations either posed on Riemannian manifolds or in the Euclidean weighted setting, with pow\-er-type nonlinearity and slow diffusion of porous medium type. We consider the particularly delicate case $p<m$ in problem \eqref{problema}, a case presently largely open even when the initial datum is smooth and compactly supported. We prove global existence for L$^m$ data, and that solutions corresponding to such data are bounded at all positive times with a quantitative bound on their L$^\infty$ norm. We also show that on Cartan-Hadamard manifolds with curvature pinched between two strictly negative constants, solutions corresponding to sufficiently large L$^m$ data give rise to solutions that blow up pointwise everywhere in infinite time, a fact that has no Euclidean analogue. The methods of proof are functional analytic in character, as they depend solely on the validity of the Sobolev and of the Poincar\'{e} inequalities.
%and on the fact that the L$^2$ spectrum of $\Delta$ on $M$ is bounded away from zero (namely on the validity of a Poincar\'{e} inequality on $M$).
As such, they are applicable to different situations, among which we single out the case of (mass) weighted reaction-diffusion equation in the Euclidean setting. In this latter setting we also consider, with stronger results for large times, the case of globally integrable weights.
\end{abstract}

\bigskip

\selectlanguage{french}
\begin{abstract}
Nous considérons des équations de réaction-diffusion posées soit sur des variétés Riemanniennes, soit dans le cadre Euclidien pondéré, avec une non-linéarité de type puissance et une diffusion lente de type moyen poreux. Nous considérons le cas particulièrement délicat $p<m$ dans le problème \eqref{problema}, un cas actuellement ouvert même lorsque la donnée initiale est régulière et à support compact. Nous prouvons l'existence globale pour données L$^m$, et que les solutions corréspondantes à ces données sont bornées à tout instant positif par une borne quantitative de leur norme L$^\infty$. Nous montrons également que sur des variétés de Cartan-Hadamard dont la courbure est comprise entre deux constantes strictement négatives, les solutions corréspondantes à des données L$^m$ suffisamment grandes donnent lieu à des solutions qui explosent en tous les points dans un temps infini, un fait qui n'a pas d'analogue Euclidien. Les méthodes de preuve sont d'analyse fonctionnelle, car elles dépendent uniquement de la validité des inégalités de Sobolev et de Poincar\'{e}. %et du fait que le spectre L$^2$ de $Delta$ sur $M$ est limité loin de zéro (c'est-à-dire de la validité d'une inégalité de Poincar\'{e} sur $M$).
En tant que telles, elles sont applicables à différentes situations, parmi lesquelles nous soulignons le cas de l'équation de réaction-diffusion pondérée (en masse) dans le cadre Euclidien. Dans ce dernier cas, nous considérons également, avec des résultats plus forts pour les périodes plus longues, le cas des pondérations intégrables globalement.
\end{abstract}
\bigskip
\bigskip

\selectlanguage{english}

\section{Introduction}

Let $M$ be a complete noncompact Riemannian manifold of infinite volume, whose dimension $N$ will be required throughout the paper to satisfy the bound $N\geq 3$. %, whose smooth Riemannian metric is indicated by $g$.
%with volume density $\sqrt{|det(g)|}$.
Let us consider the following Cauchy problem, for any $T>0$
\begin{equation}\label{problema}
\begin{cases}
\, u_t= \Delta u^m +\, u^p & \text{in}\,\, M\times (0,T) \\
\,\; u =u_0 &\text{in}\,\, M\times \{0\}
\end{cases}
\end{equation}
where $\Delta$ is the Laplace-Beltrami operator. We shall assume throughout this paper that $1<p\,<\,m$ and that the initial datum $u_0$ is nonnegative.
We let L$^q(M)$ be as usual the space of those measurable functions $f$ such that $|f|^q$ is integrable w.r.t. the Riemannian measure $\mu$ and make
the following basic assumptions on $M$, which amount to assuming the validity of both the Poincar\'{e} and the Sobolev inequalities on $M$:
\begin{equation}\label{P}
(\textrm{Poincar\'e\ inequality)}\ \ \ \ \ \|v\|_{L^2(M)} \le \frac{1}{C_p} \|\nabla v\|_{L^2(M)} \quad \text{for any}\,\,\, v\in C_c^{\infty}(M);
\end{equation}
\begin{equation}\label{S}
(\textrm{Sobolev\ inequality)}\ \ \ \ \ \ \|v\|_{L^{2^*}(M)} \le \frac{1}{C_s} \|\nabla v\|_{L^2(M)}\quad \text{for any}\,\,\, v\in C_c^{\infty}(M),
\end{equation}
where $C_p$ and $C_s$ are numerical constants and $2^*:=\frac{2N}{N-2}$. The validity of \eqref{P}, \eqref{S} puts constraints on $M$, and we comment that it is e.g. well known that, on \it Cartan-Hadamard manifolds\rm, namely complete and simply connected manifolds that have everywhere non-positive sectional curvature, \eqref{S} always holds. Furthermore, when $M$ is Cartan-Hadamard and, besides, $\sectional\le -c<0$ everywhere, $\sectional$ indicating sectional curvature, it is known that \eqref{P} holds as well, see e.g. \cite{Grig, Grig3}. Thus, both \eqref{P}, \eqref{S} hold when $M$ is Cartan-Hadamard and sec$\,\le -c<0$ everywhere, a case that strongly departs from the Euclidean situation but covers a wide class of manifolds, including e.g. the fundamental example of the hyperbolic space $\mathbb{H}^n$, namely that Cartan-Hadamard manifold whose sectional curvatures equal -1 everywhere (or the similar case in which $\sectional=-k$ everywhere, for a given $k>0$).

The behaviour of solutions to \eqref{problema} is influenced by competing phenomena. First of all there is a diffusive pattern associated with the so-called \it porous medium equation\rm, namely the equation
\begin{equation}\label{pme}
u_t \,=\, \Delta u^m \quad \textrm{in}\;\; M\times (0,T)\,,
\end{equation}
where the fact that we keep on assuming $m>1$ puts us in the \it slow diffusion case\rm. It is known that when $M={\mathbb R}^n$ and, more generally, e.g. when $M$ is a Cartan-Hadamard manifold, solutions corresponding to compactly supported data have compact support for all time, in contrast with the properties valid for solutions to the heat equation, see \cite{V}. But it is also well-known that, qualitatively speaking, \it negative curvature accelerates diffusions\rm, a fact that is apparent first of all from the behaviour of solutions of the classical heat equation. In fact, it can be shown that the standard deviation of a Brownian particle on the hyperbolic space $\mathbb{H}^n$ behaves \it linearly \rm in time, whereas in the Euclidean situation it is proportional to $\sqrt t$. Similarly, the heat kernel decays exponentially as $t\to+\infty$ whereas one has a power-type decay in the Euclidean situation.

In the Riemannian setting the study of \eqref{pme} has started recently, see e.g. \cite{GIM}, \cite{GMhyp}, \cite{GM2}, \cite{GMPbd}, \cite{GMPrm}, \cite{GMV}, \cite{Pu1},  \cite{VazH}, noting that in some of those papers also the case $m<1$ in \eqref{pme}, usually referred to as the \it fast diffusion \rm case, is studied. Nonlinear diffusion gives rise to speedup phenomena as well. In fact, considering again the particularly important example of the hyperbolic space  $\mathbb{H}^n$ (cf. \cite{VazH}, \cite{GM2}), the $L^\infty$ norm of a solution to \eqref{pme} satisfies $\|u(t)\|_\infty\asymp \left(\frac{\log t}t\right)^{1/(m-1)}$ as $t\to+\infty$, a time decay which is \it faster \rm than the corresponding Euclidean bound. Besides, if the initial datum is compactly supported, the volume $\mathsf{V}(t)$ of the support of the solution $u(t)$ satisfies $\mathsf{V}(t)\asymp t^{1/(m-1)}$ as $t\to+\infty$, while in the Euclidean situation one has $\mathsf{V}(t)\asymp t^{\beta(N,m)}$ with $\beta(N,m)<1/(m-1)$.

The second driving factor influencing the behaviour of solutions to \eqref{problema} is the \it reaction term \rm $u^p$, which has the positive sign and, thus, might drive solutions towards blow-up. This kind of problems has been widely studied in the Euclidean case $M= {\mathbb R}^N$, especially in the case $m=1$ (linear diffusion). The literature for this problem is huge and there is no hope to give a comprehensive review here, hence we just mention that blow-up occurs for all nontrivial nonnegative data when $p\le1+2/N$, while global existence prevails for $p>1+ 2/N$ (for specific  results see e.g. \cite{CFG}, \cite{DL}, \cite{F}, \cite{FI}, \cite{H}, \cite{L}, \cite{Q}, \cite{S}, \cite{W}, \cite{Y}). On the other hand, it is known that when $M=\mathbb{H}^N$ and $m=1$, for all $p>1$ and sufficiently small nonnegative data there exists a global in time solution, see \cite{BPT}, \cite{WY}, \cite{WY2}, \cite{Pu3}.

As concerns the slow diffusion case $m>1$, in the Euclidean setting it is shown in \cite{SGKM} that, when the initial datum is nonnegative, nontrivial and compactly supported, for any $p>1$, all sufficiently large data give rise to solutions blowing up in finite time. Besides, if $p\in\left(1,m+\frac2N\right)$, \it all \rm such solutions blow up in finite time. Finally,
if $p>m+\frac2N$, all sufficiently small data give rise to global solutions. For subsequent, very detailed results e.g. about the type of possible blow-up and, in some case, on continuation after blow-up, see \cite{GV}, \cite{MQV}, \cite{Vaz1} and references quoted therein.

In the Riemannian setting, existence of global solutions and blow-up in finite time for problem \eqref{problema} have been first studied in \cite{Z}, under the assumption that the volume of geodesic balls of radius $R$ grows as $R^{\alpha}$ with $\alpha\geq 2$; this kind of assumption is typically associated to \it nonnegative \rm curvature, thus the opposite situation w.r.t. the one we are studying here, in which the volume of geodesic balls grows at least exponentially as a function of the radius $R$. The results in the setting studied in \cite{Z} are qualitatively similar to the Euclidean ones.

The situation on negatively curved manifolds is significantly different, and the first results in this connection have been shown in \cite{GMPv}, where only the case of nonnegative, compactly supported data is considered. Among the results of that paper, we mention the case that a \it dichotomy phenomenon \rm holds when $p>m$, in the sense that under appropriate curvature conditions, compatible with the assumptions made in the present paper,  all sufficiently small data give rise to solutions existing globally in time, whereas sufficiently large data give rise to solutions blowing up in finite time. Results were only partial when $p<m$, since it has been shown that when $p\in\left(1,\frac{1+m}{2}\right]$ and again under suitable curvature conditions, all solutions corresponding to compactly supported initial data exist globally in time, and blow up everywhere pointwise in infinite time. When $p\in\left(\frac{1+m}{2},m\right)$, precise information on the asymptotic behaviour is not known, since blowup is shown to occur at worse in infinite time, but could in principle occur before.

\subsection {Qualitative statements of main results in the manifold setting}
\medskip
We extend here the results of \cite{GMPv} in two substantial aspects. In fact, we summarize our main results as follows.

\begin{itemize}
\item The methods of \cite{GMPv} rely heavily on explicit \it barrier arguments\rm, that by their very same nature are applicable to compactly supported data only and, in addition, require explicit curvature bounds in order to be applicable. We prove here global existence for L$^m$ data and prove \it smoothing effects \rm for solutions to \eqref{problema}, where by smoothing effect we mean the fact that L$^m$ data give rise to global solutions $u(t)$ such that $u(t)\in \text{L}^\infty$ for all $t>0$, with quantitative bounds on their L$^\infty$ norm. This will be a consequence \it only \rm of the validity of Sobolev and Poincar\'e inequalities \eqref{S}, \eqref{P}, see Theorem \ref{teoesistenza}.

\item As a consequence, combining this fact with some results proved in \cite{GMPv}, we can prove that, on manifolds satisfying e.g. $-c_1\le \textrm{sec}\le -c_2$ with $c_1\ge c_2>0$, thus encompassing the particularly important case of the hyperbolic space $\mathbb{H}^n$ (somewhat weaker lower curvature bounds can be assumed), any solution $u(t)$ to \eqref{problema} corresponding to an initial datum $u_0\in\text{L}^m$ exists globally  and, provided $u_0$ is sufficiently large, it satisfies the property
    \[
    \lim_{t\to+\infty} u(x, t)=+\infty\ \ \ \forall x\in M,
    \]
    namely \it complete blowup in infinite time \rm occurs for such solutions to \eqref{problema} in the whole range $p\in(1,m)$, see Theorem \ref{blowup}.

\end{itemize}

Our results can also be seen as an extension of some of the results proved in \cite{Sacks}. However, the proof of the smoothing estimate given in \cite[Theorem 1.3]{Sacks} is crucially based on the assumption that the measure of the domain where the problem is posed is finite. This is not true in our setting. So, even if we use some general idea introduced in \cite{Sacks}, our proofs and results are in general quite different from those in \cite{Sacks}.

For detailed reference to smoothing effect for linear evolution equations see \cite{D}, whereas we refer to \cite{Vsmooth} for a general treatment of smoothing effects for nonlinear diffusions, and to \cite{BG, GMPo,GM2} for connections with functional inequalities in the nonlinear setting.

We mention phenomena similar to the ones discussed in the present paper occur in qualitatively related but different settings. For example, we mention that solutions to the heat equation with Dirichlet boundary conditions in a twisted tube (namely a straight tube in $\mathbb{R}^3$ whose cross-section is twisted in a given compact region) give rise to smoothing estimates that are \it stronger \rm for large times than the ones corresponding to the untwisted situation, i.e. the geometry improves the smoothing effects, see \cite{KZ1,KZ2,GKP}.

\subsection{Qualitative statements of main results for Euclidean, weighted reaction-diffusion equations}

\bigskip The main result given in Theorem \ref{teoesistenza} depend essentially only on the validity of inequalities \eqref{P} and \eqref{S}, and as such is almost immediately generalizable to different contexts. As a particularly significant situation, we single out the case of Euclidean, mass-weighted reaction diffusion equations. In fact we consider the problem
\begin{equation}\label{problema2}
\begin{cases}
\rho\, u_t= \Delta u^m +\rho\, u^p & \text{in}\,\, \R^N\times (0,T) \\
u\,\,  =u_0 &\text{in}\,\, \R^N\times \{0\},
\end{cases}
\end{equation}
in the Euclidean setting, where $\rho:\R^N\to\R$ is strictly positive, continuous and  bounded, and represents a \it mass density \rm. The problem is naturally posed in the weighted spaces
$$L^q_{\rho}(\R^N)=\left\{v:\R^N\to\R\,\, \text{measurable}\,\,  ,   \,\, \|v\|_{L^q_{\rho}}:=\left(\int_{\R^N} \,v^q\rho(x)\,dx\right)^{1/q}<+\infty\right\},$$

This kind of models originates in a physical model provided in \cite{KR}. There are choices of $\rho$ ensuring that the following analogues of \eqref{P} and \eqref{S} hold:
\begin{equation}\label{P-pesi}
\|v\|_{L^2_{\rho}(\R^N)} \le \frac{1}{C_p} \|\nabla v\|_{L^2(\R^N)} \quad \text{for any}\,\,\, v\in C_c^{\infty}(\R^N)
\end{equation}
and
\begin{equation}\label{S-pesi}
\|v\|_{L^{2^*}_{\rho}(\R^N)} \le \frac{1}{C_s} \|\nabla v\|_{L^2(\R^N)}\quad \text{for any}\,\,\, v\in C_c^{\infty}(\R^N)
\end{equation}
for suitable positive constants. In fact, in order to make a relevant example, if $\rho(x)\asymp |x|^{-a}$ for a suitable $a>0$, it can be shown that \eqref{P-pesi} holds if $a\ge2$ (see e.g. \cite{GMPo} and references therein), whereas also \eqref{S-pesi} is obviously true for any $a>0$ because of the validity of the usual, unweighted Sobolev inequality and of the assumptions on $\rho$. Of course more general cases having a similar nature but where the analogue of \eqref{S-pesi} is not a priori trivial, could be considered, but we focus on that example since it is widely studied in the literature and because of its physical significance.

In \cite{MT, MTS} a large class of nonlinear reaction-diffusion equations, including in particular problem \eqref{problema2} under certain conditions on $\rho$, is investigated. It is proved that a global solution exists, (see \cite[Theorem 1]{MT}) provided that $\rho(x)=|x|^{-a}$ with $a\in (0,2)$,
\[p>m+\frac{2-a}{N-a},\]
and $u_0\geq 0$ is small enough. In addition, a smoothing estimate holds. On the other hand, if $\rho(x)=|x|^{-a}$
 or $\rho(x)=(1+|x|)^{-a}$ with $a\in [0,2)$, $u_0\not\equiv 0$ and
 \[1<p<m+\frac{2-a}{N-a},\] then any nonnegative solution blows up in a suitable sense.
Such results have also been generalized to more general initial data, decaying at infinity with a certain rate (see \cite{MTS}).
Finally, in \cite[Theorem 2]{MT}, it is shown that if $p>m$, $\rho(x)=(1+|x|)^{-a}$ with $a>2$, and $u_0$ is small enough, a global solution exists.

Problem \eqref{problema2} has also been studied in \cite{MP1}, \cite{MP2}, by constructing and using suitable barriers, initial data being continuous and compactly supported. In particular, in \cite{MP1} the case that $\rho(x)\asymp |x|^{-a}$ for $|x|\to+\infty$
with $a\in (0,2)$ is addressed. It is proved that for any $p>1$, if $u_0$ is large enough, then blowup occurs. On the other hand, if $p>\bar p$, for a certain
$\bar p>m$ depending on $m, p$ and $\rho$, and $u_0$ is small enough, then global existence of bounded solutions prevails. Moreover, in \cite{MP2} the case that $a\geq 2$ is investigated. For $a=2$, blowup is shown to occur when $u_0$ is big enough, whereas global existence holds when $u_0$ is small enough. For $a>2$ it is proved that if $p>m$, $u_0\in L^{\infty}_{\rm{loc}}(\mathbb R^N)$ and goes to $0$ at infinity with a suitable rate, then there exists a global bounded solution. Furthermore, for the same initial datum $u_0$, if $1<p<m$, then there exists a global solution, which could blow up as $t\to +\infty$\,.

\smallskip

 Our main results in this setting can be summarized as follows.

\begin{itemize}
\item We prove in Theorem \ref{teoesistenza2} global existence and smoothing effects for solutions to \eqref{problema2}, assuming that the weight $\rho:\R^N\to\R$ is strictly positive, smooth and  bounded, so that \eqref{S-pesi} necessarily holds, and assuming the validity of \eqref{P-pesi}. In particular, L$^m$ data give rise to global solutions $u(t)$ such that $u(t)\in $\,L$^\infty$ for all $t>0$, with quantitative bounds on their L$^\infty$ norm. By constructing a specific, delicate example, we show in Proposition \ref{teosubsolutioncritical} that the bound on the L$^\infty$ norm (which involves a quantity diverging as $t\to+\infty$) is qualitatively sharp, in the sense that there are examples of weights for which our running assumption holds and for which blow-up of solutions in infinite time holds pointwise everywhere (we refer to this property by saying that \it complete blowup in infinite time \rm occurs). We also prove, by similar methods which follow the lines of \cite{Sacks}, different smoothing effects which are stronger for large times, when $\rho$ is in addition assumed to be integrable, see Theorem \ref{teoesistenza3}.
\end{itemize}

Let us mention that the results in \cite{MP2} for $1<p<m$ are improved here  in various directions. In fact, now we consider a larger class of initial data $u_0$, since
we do not require that they are locally bounded; moreover, in \cite{MP2} no smoothing estimates are addressed. Furthermore, the fact that for integrable weights $\rho$ we have global existence of bounded solutions does not have
a counterpart in \cite{MP2}, nor has the blowup results in infinite time.

\subsection{On some open problems} As stated above, the present paper settles the problem of global existence of solutions to problem \eqref{problema} on manifolds $M$ supporting both the Sobolev and the Poincaré inequalities, in the case $1<p<m$ and for data belonging to $L^m(M)$. It is also shown that solutions corresponding to such data are bounded for all $t>0$, with quantitative bounds on the $L^\infty(M)$ norm of solutions for all $t>0$. We also settle the long-time behaviour of solutions to problem \eqref{problema} on manifolds $M$ whose curvature is pinched between two strictly negative constants, where $1<p<m$ and data belong to $L^m(M)$, showing that they blowup pointwise in infinite time. The following questions are however open for further investigation:

\begin{itemize}
\item Does similar results hold for data in Lebesgue spaces $L^q(M)$ with $q\not=m$? The present method of proof does not extend to such data.
\item Do \it all \rm initial data in $L^q(M)$ blow up in infinite time, or the long-time asymptotic of small data is different?
\end{itemize}

\subsection{Organizazion of the paper} In Section \ref{statements} we collect the relevant definitions and state our main results, both in the setting of Riemannian manifolds and in the Euclidean, weighted case. In Section \ref{elliptic} we prove some crucial results for an auxiliary elliptic problem, that will then be used in Section \ref{Lp} to show bounds on the $\text{L}^p$ norms of solutions to certain evolution problems posed on geodesic balls. In Section \ref{proofs} we conclude the proof of our main results for the case of reaction-diffusion problems on manifolds. In Section \ref{weights} we briefly comment on the adaptation to be done to deal with the weighted Euclidean case, and prove the additional results valid in the case of an integrable weight. We also discuss there a delicate example showing that complete blowup in infinite time may occur under the running assumptions.

\section{Preliminaries and statement of main results}\label{statements}

We first define the concept of solution to \eqref{problema} that we shall use hereafter. It will be meant in the very weak, or distributional, sense.

\begin{definition}\label{1}
Let $M$ be a complete noncompact Riemannian manifold of infinite volume. Let $1<p<m$ and $u_0\in{\textrm L}^m(M)$, $u_0\ge0$. We say that the function $u$ is a solution to problem \eqref{problema} in the time interval $[0,T)$ if
$$
u\in L^m(M\times(0,T))\,,%\,\,\, \text{for any}\,\,T>0,
$$
and for any $\varphi \in C_c^{\infty}(M\times[0,T])$ such that $\varphi(x,T)=0$ for any $x\in M$, $u$ satisfies the equality:
\begin{equation*}%\label{3}
\begin{aligned}
-\int_0^T\int_{M} \,u\,\varphi_t\,d\mu\,dt =&\int_0^T\int_{M} u^m\,\Delta\varphi\,d\mu\,dt\,+ \int_0^T\int_{M} \,u^p\,\varphi\,d\mu\,dt \\
& +\int_{M} \,u_0(x)\,\varphi(x,0)\,d\mu.
\end{aligned}
\end{equation*}
\end{definition}

\begin{theorem}\label{teoesistenza}
Let $M$ be a complete, noncompact manifold of infinite volume such that the Poincaré and Sobolev inequalities \eqref{P} and \eqref{S} hold on $M$. Let $1<p<m$ and $u_0\in{\textrm L}^m(M)$, $u_0\ge0$. Then problem \eqref{problema} admits a solution for any $T>0$,  in the sense of Definition \ref{1}. Moreover for any $T>\tau>0$ one has $u\in L^{\infty}(M\times(\tau,T))$ and there exist numerical constants $c_1, c_2>0$, independent of $T$, such that, for all $t>0$ one has

\begin{equation}\begin{aligned}\label{smoothing}
\|u(t)\|_{L^{\infty}(M)}&\le c_1e^{c_2t}\left\{ \|u_{0}\|_{L^m(M)}^{\frac{2m}{2m +N(m-p)}}+\frac{\|u_{0}\|_{L^m(M)}^{\frac{2m}{2m+N(m-1)}}}{t^{\frac{N}{2m+N(m-1)}}}\right\}.
\end{aligned}\end{equation}

Besides, if $q>1$ and $u_0\in L^q(M)\cap L^m(M)$, then there exists $C(q)>0$ such that \begin{equation}\label{Lq}
\|u(t)\|_{L^q(M)}\le  e^{C(q)t}\|u_{0}\|_{L^q(M)}\quad \textrm{for all }\,\, t>0\,.
\end{equation}

\end{theorem}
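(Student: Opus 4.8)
\medskip

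The plan is to argue by approximation on an exhausting sequence of geodesic balls, together with $L^p$–$L^\infty$ smoothing estimates obtained via the Moser iteration machinery powered only by the Sobolev inequality \eqref{S} and the Poincaré inequality \eqref{P}. First I would fix an increasing sequence of geodesic balls $B_n\uparrow M$ and consider the approximate problems $u_t=\Delta u^m+u^p$ in $B_n\times(0,T)$ with homogeneous Dirichlet boundary data and initial datum $u_{0,n}:=\min\{u_0,n\}\chi_{B_n}$, which is bounded and compactly supported. Existence, uniqueness and regularity of a nonnegative solution $u_n$ on each $B_n$ for a short time is standard; the key point is to obtain \emph{a priori} estimates uniform in $n$ that both prevent finite-time blowup and give the quantitative smoothing bound. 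The crucial ingredient here is that because $p<m$, the reaction term is \emph{subcritical} relative to the diffusion at high values of $u$: on the set where $u$ is large, $u^p\le u^m$, so the nonlinearity can be dominated (after using Young's inequality to absorb it) by the diffusive dissipation plus a lower-order term, which is exactly what produces the factor $e^{c_2 t}$ rather than finite-time blowup. This is where the hypothesis $1<p<m$ is used essentially.

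\medskip

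The core estimate, and the step I expect to be the main obstacle, is the $L^m\to L^\infty$ smoothing bound \eqref{smoothing} uniform in $n$. I would proceed in two stages. \emph{Stage one}: bound $\|u_n(t)\|_{L^m}$ in terms of $\|u_{0,n}\|_{L^m}$. Testing the equation against $u_n^{2m-1}$ (formally $mu_n^{m-1}$ times the natural energy test function) and using the Sobolev inequality on $\nabla(u_n^m)$, one gets a differential inequality of the form $\frac{d}{dt}\|u_n\|_{L^m}^m \le -c\,\|u_n^m\|_{L^{2^*}}^{\,2}\cdot(\dots) + \int u_n^{2m-1+p-m}$; splitting the reaction integral over $\{u_n\le 1\}$ and $\{u_n>1\}$, the high part is controlled by the dissipation (since $p<m$) and the low part by $\|u_n\|_{L^m}^m$ itself, yielding $\|u_n(t)\|_{L^m}\le e^{Ct}\|u_{0,n}\|_{L^m}$, hence the $L^q$ bound \eqref{Lq} by the same computation with $m$ replaced by $q$. \emph{Stage two}: run a Moser iteration on $u_n^k$ for $k=m(2^*/2)^j$, $j\to\infty$. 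Each step combines the energy inequality for $u_n^{k}$ with \eqref{S} to pass from an $L^{k}$ bound to an $L^{k\cdot 2^*/2}$ bound with a constant depending on $k$; the reaction term is again split and absorbed using $p<m$ and Young's inequality, while the Poincaré inequality \eqref{P} is what supplies the full coercivity $\|\nabla v\|_{L^2}^2\gtrsim \|v\|_{L^2}^2$ needed to close the iteration with constants that do not degenerate and that produce the explicit time powers $t^{-N/(2m+N(m-1))}$ in \eqref{smoothing}. Summing the geometric series of exponents gives the stated bound; the two terms in the braces arise respectively from the $\|u_{0,n}\|_{L^m}$-contribution when $u_n$ is large (exponent $\tfrac{2m}{2m+N(m-p)}$, reflecting the $u^p$ forcing) and from the pure porous-medium smoothing when $u_n$ is small (exponent $\tfrac{2m}{2m+N(m-1)}$). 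The delicate bookkeeping is tracking how the constant $c_1,c_2$ stay independent of $n$ and $T$; this is the technical heart of the argument, and it is precisely where the self-improving structure of Moser iteration together with the \emph{two} functional inequalities is indispensable.

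\medskip

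Finally, I would pass to the limit $n\to\infty$. The uniform $L^m$ and, for $t\ge\tau>0$, $L^\infty$ bounds give, via standard parabolic regularity for the porous medium operator (local Hölder estimates for $u_n^m$ and equicontinuity), enough compactness to extract a subsequence converging locally uniformly on $M\times(0,T)$ to a limit $u$ which inherits the bounds \eqref{smoothing} and \eqref{Lq} and satisfies the weak formulation of Definition \ref{1}; the initial datum is attained in $L^m$ because of the $L^m$ continuity in time provided by Stage one. Monotonicity of $u_{0,n}$ in $n$ (together with a comparison principle on each fixed ball) makes the convergence monotone, which streamlines the limit passage and guarantees the limit is independent of the exhaustion. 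This yields a global-in-time solution for every $T>0$, completing the proof.
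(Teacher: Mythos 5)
Your overall architecture---exhaustion by balls, uniform a priori bounds, monotone limit---matches the paper, and your Stage one is essentially the paper's Lemma \ref{lemma3}: testing with $u^{q-1}$, using the Poincar\'e inequality \eqref{P} and a Young-type inequality exploiting $p<m$ to get $\|u(t)\|_{L^q}\le e^{C(q)t}\|u_0\|_{L^q}$, which is exactly \eqref{Lq} and supplies the $e^{c_2t}$ factor. Where you genuinely diverge is the $L^m$--$L^\infty$ step: you propose a parabolic Moser iteration driven by \eqref{S}, whereas the paper never iterates. It freezes time and argues elliptically: by an Aronson--B\'enilan estimate (Proposition \ref{prop2}) the profile $w=u(\cdot,t)$ satisfies $-\Delta w^m\le w^p+\frac{1}{(m-1)t}\,w$ in the distributional sense, and a Stampacchia-type level-set estimate (Proposition \ref{prop1}), built so that its constants involve only $\|w\|_{L^1}$-type quantities and never $\mu(B_R)$ (essential here, since $\mu(B_R)\to+\infty$), converts this into an $L^\infty$ bound in terms of $\|w\|_{L^m}$ after sending the auxiliary integrability parameters to infinity. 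The two terms of \eqref{smoothing}, with exponents $\frac{2m}{2m+N(m-p)}$ and $\frac{2m}{2m+N(m-1)}$, fall out directly from the two pieces $w^p$ and $\frac{w}{(m-1)t}$ of that elliptic right-hand side; so in the paper the Sobolev inequality powers the elliptic estimate while Poincar\'e only enters through the $L^q$ propagation, a division of labour somewhat different from the one you describe.

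Your route is in the spirit of \cite{BG} and can plausibly be made to work on infinite-volume manifolds (Sobolev-driven iteration constants do not see $\mu(B_R)$), but the part you yourself call delicate bookkeeping is exactly where the sketch is thinnest: you assert, rather than derive, that the iteration yields the specific two-term bound with a time power only on the second term and with $c_1,c_2$ independent of $T$ and of the ball. Absorbing $u^p$ at each iteration level produces constants growing with the exponent and exponential-in-time factors whose accumulation along the iteration must be tracked, and it is not evident from your outline that the reaction contribution organizes itself into the clean term $\|u_0\|_{L^m}^{2m/(2m+N(m-p))}$ free of any time singularity; this is the step you would have to carry out in full. Two smaller deviations are worth noting: the paper truncates the reaction to $T_k(u^p)$ in the ball problems \eqref{problemapalla}, so the approximations are globally solvable by standard theory and no continuation argument is needed, whereas you keep $u^p$ untruncated and only have short-time existence a priori, so your a priori bounds must be coupled to an explicit continuation argument; and the limit passage in the paper uses only monotonicity in the three approximation parameters, Fatou and monotone convergence, plus an energy estimate giving $u^{(m+1)/2}\in C([\tau,T];L^2)$, so no local parabolic H\"older regularity or compactness extraction is invoked.
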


One may wonder whether the upper bound in \eqref{smoothing} is qualitatively sharp, since its r.h.s. involves a function of time that tends to $+\infty$ as $t\to+\infty$. This is indeed the case, since there is a wide class of situations covered by Theorem \ref{teoesistenza} in which classes of solutions do indeed satisfy $\|u(t)\|_\infty\to+\infty$ as $t\to+\infty$ and show even the much stronger property of \it blowing up pointwise everywhere in infinite time\rm. In fact, as a direct consequence of Theorem \ref{teoesistenza}, of known geometrical conditions for the validity of \eqref{P} and \eqref{S}, and of some results given in \cite{GMPv}, we can prove the following result. We stress that this property has no Euclidean analogue for the corresponding reaction-diffusion problem.

\begin{theorem}\label{blowup}
Let $M$ be a Cartan-Hadamard manifold and let $\sectional$ denote sectional curvature, $\Rico$ denote the Ricci tensor in the radial direction with respect to a given pole $o\in M$. Assume that the following curvature bounds hold everywhere on $M$, for suitable $k_1\ge k_2>0$:

\begin{equation*}%\label{sec}
\Rico(x)\geq - k_1;\ \ \ \sectional\le -k_2\,.
\end{equation*}
Then the results of Theorem \ref{teoesistenza} hold. Besides, consider any nonnegative solution $u$ to \eqref{problema} corresponding to an initial datum $u_0\in{\textrm L}^m(M)$ which is sufficiently large in the sense that $u_0\ge v_0$ for a suitable nonnegative and sufficiently large function $v_0\in C_c^{0}(M)$. Then $u$
satisfies
\[
\lim_{t\to+\infty}u(x,t)=+\infty\ \ \ \forall x\in M.
\]
\end{theorem}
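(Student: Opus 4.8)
The plan is to combine the global existence and smoothing statement of Theorem \ref{teoesistenza} (which guarantees that, under the stated curvature pinching, a global solution $u$ exists and is bounded on $M\times(\tau,T)$ for every $0<\tau<T$) with a comparison argument against a suitable subsolution constructed in \cite{GMPv}. The first step is to check that the hypotheses of Theorem \ref{teoesistenza} are satisfied: since $M$ is Cartan--Hadamard, the Sobolev inequality \eqref{S} holds automatically, and the pinching $\mathrm{sec}\le -k_2<0$ (together with the lower Ricci bound, which is in fact not even needed for Poincaré) yields \eqref{P} by the results cited after \eqref{S}. Hence the conclusions of Theorem \ref{teoesistenza} apply: $u$ is global and, for every $t>0$, $u(t)\in L^\infty(M)$ with the quantitative bound \eqref{smoothing}.

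The core of the argument is the infinite-time blowup, and here I would invoke the barrier construction of \cite{GMPv}. In that paper it is shown that, on a Cartan--Hadamard manifold with $\Rico\ge -k_1$ and $\mathrm{sec}\le -k_2$, for every compactly supported $v_0$ that is large enough there is a subsolution $\underline u$ to \eqref{problema}, with initial datum $v_0$, such that $\underline u(x,t)\to+\infty$ as $t\to+\infty$ for every $x\in M$ (this is exactly the "blow up everywhere pointwise in infinite time" phenomenon established there for compactly supported data in the range $p\in(1,\frac{1+m}{2}]$; the point of the present theorem is to remove the restriction on $p$ and on the datum). Fix such a $v_0\in C_c^0(M)$ with $v_0\le u_0$. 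By a comparison principle for \eqref{problema} between the global solution $u$ and the subsolution $\underline u$, valid because $u_0\ge v_0$, one gets $u(x,t)\ge \underline u(x,t)$ for all $x\in M$ and all $t>0$. Letting $t\to+\infty$ then gives $u(x,t)\to+\infty$ for every $x$, which is the assertion. The ordering of steps is thus: (i) verify the functional inequalities from the curvature bounds; (ii) apply Theorem \ref{teoesistenza} to obtain a global, locally bounded solution; (iii) recall from \cite{GMPv} the existence of an everywhere-diverging compactly supported subsolution under the stated curvature conditions; (iv) compare and pass to the limit.

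The main obstacle is step (iv), the comparison between the constructed subsolution and the solution produced by Theorem \ref{teoesistenza}: solutions here are only very weak (distributional) and are obtained as limits of approximations on geodesic balls, so one cannot invoke a textbook comparison principle directly. The clean way around this is to exploit the fact that, by construction in the proof of Theorem \ref{teoesistenza}, $u$ arises as the monotone limit of solutions $u_R$ of approximate problems on balls $B_R$ with, say, zero boundary data and truncated initial data; one compares $\underline u$ (restricted to $B_R$, where it is a subsolution vanishing on $\partial B_R$ for the relevant times once $R$ is large compared to the support of $v_0$ — or more carefully, one uses that $\underline u$ is itself compactly supported at each fixed time by finite speed of propagation for the porous medium part) with $u_R$ on $B_R$ via the comparison principle for the nondegenerate approximate equations, and then lets $R\to+\infty$. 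A secondary point requiring care is matching the precise curvature hypotheses and the precise form of the subsolution to what is actually proved in \cite{GMPv}; since the theorem statement explicitly says "combining with some results proved in \cite{GMPv}," I would simply cite the relevant proposition there for the existence and divergence of $\underline u$, and concentrate the written proof on the verification of hypotheses and on making the limiting comparison rigorous.
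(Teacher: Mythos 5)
Your proposal is correct and follows essentially the same route as the paper: verify that the curvature bounds yield \eqref{P} and \eqref{S}, apply Theorem \ref{teoesistenza} for global existence (which is precisely what rules out the possible finite explosion time left open in \cite{GMPv}), invoke the barrier lower bound from \cite{GMPv} for large compactly supported data, and conclude by comparison with the datum $v_0\le u_0$. The extra care you devote to justifying the comparison step through the ball approximations $u_R$ is a detail the paper passes over silently, but it is the same argument, not a different one.
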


Observe that, as it will appear from the proof, for the function $v_0$ in Theorem \ref{blowup} we require that $v_0>0$ in a geodesic ball $B_R$ with $R>0$ and $m:=\inf_{B_R}v_0$ both sufficiently large.

\subsection{Weighted reaction-diffusion equations in the Euclidean space}\label{weight}

As mentioned in the Introduction, the methods used in proving Theorem \ref{teoesistenza} are general enough, being based on functional inequalities only, to be easily generalized to different contexts. We single out here the one in which reaction-diffusion equations are considered in the Euclidean setting, but in which diffusion takes place in a medium having a nonhomogeneous density, see e.g. \cite{KR}, \cite{MT}, \cite{MTS}, \cite{MTS2} and references quoted therein.

We consider a \it weight \rm $\rho:\R^N\to\R$ such that
\begin{equation}\label{rho2}
\rho \in C(\R^N)\cap L^{\infty}(\R^N), \ \ \rho(x)>0 \,\, \text{for any}\,\, x\in \R^N,
\end{equation}
and the associated weighted Lebesgue spaces $$L^q_{\rho}(\R^N)=\{v:\R^N\to\R\,\, \text{measurable}\,\,  |   \,\, \|v\|_{L^q_{\rho}}<+\infty\},$$
where $ \|v\|_{L^q_{\rho}}:=\int_{\R^N} \rho(x)\,|v(x)|^q\,dx.$
Moreover, we assume that $\rho$ is such that the weighted Poincar\'{e} inequality \eqref{P-pesi} holds. By construction and by the  assumptions in \eqref{rho2} it follows that the weighted Sobolev inequality \eqref{S-pesi} also holds, as a consequence of the usual Sobolev inequality in $\R^N$ and of \eqref{rho2}.

Moreover, we let $u_0:\R^N\to\R$ be such that
\begin{equation*}%\label{2b}
\,\,\, u_0\in L^m_{\rho}(\R^N),\,\,\,\,\, u_0(x)\ge 0 \,\, \text{for a.e.}\,\, x\in \R^N
\end{equation*}
and consider, for any $T>0$ and for any $1\,<\,p\,<\,m,$ problem \eqref{problema2}.

The definition of solution we use will be again the very weak one, adapted to the present case.

\begin{definition}\label{2}
Let $1<p<m$ and $u_0\in{\textrm L}_{\rho}^m(\mathbb R^N)$, $u_0\ge0$. Let the weight $\rho$ satisfy \eqref{rho2}. We say that the function $u$ is a solution to problem \eqref{problema2} in the interval $[0, T)$ if
$$
u\in L^m_{\rho}(\mathbb R^N\times(0,T))\,\,\,
$$
and for any $\varphi \in C_c^{\infty}(\mathbb R^N\times[0,T])$ such that $\varphi(x,T)=0$ for any $x\in \mathbb R^N$, $u$ satisfies the equality:
\begin{equation}\label{3a}
\begin{aligned}
-\int_0^T\int_{\mathbb{R}^N} \,u\,\varphi_t\,\rho(x)\,dx\,dt =&\int_0^T\int_{\mathbb R^N} u^m\,\Delta \varphi\,dx\,dt\,+ \int_0^T\int_{\mathbb R^N} \,u^p\,\varphi\,\rho(x)\,dx\,dt \\
& +\int_{\mathbb R^N} \,u_0(x)\,\varphi(x,0)\,\rho(x)\,dx.
\end{aligned}
\end{equation}
\end{definition}

\begin{theorem}\label{teoesistenza2}

Let $\rho$ satisfy \eqref{rho2} and assume that the weighted Poincaré inequality \eqref{P-pesi} holds. Let $1<p<m$ and $u_0\in{\textrm L}_{\rho}^m(\mathbb R^N),$ $u_0\ge0$. Then problem \eqref{problema2} admits a solution for any $T>0$,  in the sense of Definition \ref{2}. Moreover for any $T>\tau>0$ one has $u\in L^{\infty}(\mathbb R^N\times(\tau,T))$ and there exist numerical constants $c_1$, $c_2>0$, independent of $T$, such that, for all $t>0$ one has

\begin{equation}\begin{aligned}\label{smoothingweight}
\|u(t)\|_{L^{\infty}(\R^N)}&\le c_1e^{c_2t}\left\{ \|u_{0}\|_{L^m_{\rho}(\R^N)}^{\frac{2m}{2m +N(m-p)}}+\frac{\|u_{0}\|_{L^m_{\rho}(\R^N)}^{\frac{2m}{2m+N(m-1)}}}{t^{\frac{N}{2m+N(m-1)}}}\right\}.
\end{aligned}\end{equation}

Besides, if $q>1$ and $u_0\in L^q_{\rho}(\mathbb R^N)\cap L^m_{\rho}(\mathbb R^N)$, then there exists $C(q)>0$ such that \begin{equation*}%\label{Lqweight}
\|u(t)\|_{L^q_{\rho}(\mathbb R^N)}\le  e^{C(q)t}\|u_{0}\|_{L^q_{\rho}(\mathbb R^N)}\quad \textrm{ for all }\,\, t>0\,.
\end{equation*}

Finally, there are examples of weights satisfying the assumptions of the present Theorem and such that sufficiently large initial data $u_0$ give rise to solutions $u(x,t)$ blowing up pointwise everywhere in infinite time, i.e. such that $\lim_{t\to+\infty}u(x,t)=+\infty$ for all $x\in \mathbb R^N$, so that in particular $\|u(t)\|_\infty\to+\infty$ as $t\to+\infty$ and hence the upper bound in \eqref{smoothingweight} is qualitatively sharp. One can take e.g. $\rho\asymp |x|^{-2}$ as $|x|\to+\infty$ for this to hold.

\end{theorem}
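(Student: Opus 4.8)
The plan is to transfer the proof of Theorem \ref{teoesistenza} verbatim to the weighted setting. The entire argument for Theorem \ref{teoesistenza} uses the manifold $M$ only through the Poincar\'e inequality \eqref{P}, the Sobolev inequality \eqref{S}, the fact that $\mu$ is a Radon measure of infinite total mass, and standard parabolic machinery (approximation on geodesic balls, energy estimates, Moser/De Giorgi–type iteration, compactness). Replacing $d\mu$ by $\rho(x)\,dx$, the Laplace–Beltrami operator by $\Delta$, and \eqref{P}, \eqref{S} by \eqref{P-pesi}, \eqref{S-pesi}, every step goes through unchanged; the only subtlety is the mismatch between the weighted measure appearing in the zeroth-order and time-derivative terms of \eqref{problema2} and the unweighted Lebesgue measure appearing in the diffusion term. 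This is exactly the structure of problem \eqref{3a}, and the estimates are arranged precisely so that $\|\nabla(\cdot)\|_{L^2(\mathbb R^N)}$ (unweighted) is controlled and then fed into \eqref{P-pesi}, \eqref{S-pesi} to recover weighted $L^2$ and $L^{2^*}$ norms; the reaction term $u^p$ carries the weight $\rho$, so testing the equation against powers of $u$ produces weighted integrals on both sides and the $p<m$ structure is used as in the manifold case. First I would repeat the construction of approximate solutions $u_R$ on balls $B_R$ with homogeneous Dirichlet data, derive the uniform $L^q_\rho$ bound \eqref{Lq}-type estimate by multiplying by $u_R^{q-1}$ and using that $\rho$ is bounded to absorb constants, then run the iteration scheme that upgrades $L^m_\rho$ control to the $L^\infty$ smoothing estimate \eqref{smoothingweight}, and finally pass to the limit $R\to\infty$ using monotonicity in $R$ and the uniform bounds.

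For the last assertion — qualitative sharpness via complete blowup in infinite time — the plan is to exhibit a concrete radial weight $\rho(x)\asymp |x|^{-2}$ as $|x|\to\infty$ (smoothed near the origin so that \eqref{rho2} holds) for which \eqref{P-pesi} is valid, and to construct an explicit subsolution to \eqref{problema2} that grows without bound at every point. Since $\rho\asymp|x|^{-2}$ is the borderline case for the weighted Poincar\'e inequality, the first task is to verify \eqref{P-pesi} for this $\rho$ (this is classical, e.g. via a Hardy-type inequality, and is cited from \cite{GMPo}). The second and main task is the subsolution: I would look for a self-similar or separated-variables profile, e.g. of the form $\underline u(x,t) = \eta(t)\,\psi(|x|)$ or a travelling-front ansatz, chosen so that the slow diffusion $\Delta \underline u^m$ is dominated by $\rho\,\underline u^p - \rho\,\underline u_t$; the decay $\rho\sim|x|^{-2}$ should make the weighted reaction term strong enough, relative to the weighted time derivative, that $\eta(t)\to+\infty$ is forced while the spatial profile stays pointwise positive everywhere. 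Comparing $\underline u$ with a solution $u$ arising from any $u_0 \ge \underline u(\cdot,0)$ via the comparison principle for \eqref{problema2} then yields $\lim_{t\to+\infty} u(x,t)=+\infty$ for all $x$, which in particular forces $\|u(t)\|_\infty\to+\infty$ and shows the time-growing factor in \eqref{smoothingweight} cannot be removed.

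The main obstacle will be the construction and verification of the explicit infinite-time-blowup subsolution: one must choose the profile and the time-amplitude so that all the competing terms (the degenerate diffusion $\Delta u^m$ with $m>1$, the weighted reaction $\rho u^p$ with $p<m$, and the weighted transient $\rho u_t$) balance in the correct direction of inequality simultaneously at every $(x,t)$, including the delicate regions $|x|\to\infty$ and near the origin where $\rho$ has been regularized. This is the "specific, delicate example" flagged as Proposition \ref{teosubsolutioncritical} in the introduction, and I expect the computation — not the idea — to be where the real work lies; the rest of the theorem is a routine, if lengthy, transcription of the manifold proof with $d\mu \rightsquigarrow \rho\,dx$.
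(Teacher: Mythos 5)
For the existence, smoothing and $L^q_\rho$ statements your plan coincides with the paper's proof: the authors prove Theorem \ref{teoesistenza2} precisely by repeating, step by step, the proof of Theorem \ref{teoesistenza}, with \eqref{P-pesi}, \eqref{S-pesi} replacing \eqref{P}, \eqref{S} and with the weighted Aronson--B\'enilan estimate (Proposition \ref{prop2a}) replacing Proposition \ref{prop2}; one small correction is that the boundedness of $\rho$ in \eqref{rho2} is what yields \eqref{S-pesi} from the unweighted Sobolev inequality, while the $L^q_\rho$ estimate is obtained from \eqref{P-pesi} exactly as in Lemma \ref{lemma3}, not by ``absorbing constants'' through $\|\rho\|_\infty$.

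The genuine gap is the last assertion, which is exactly the content of Proposition \ref{teosubsolutioncritical} and which you leave unconstructed. Moreover, one of the two ans\"atze you propose would fail. A separated-variables subsolution $\underline u=\eta(t)\psi(|x|)$ with a fixed, everywhere-positive profile cannot be compared with the admissible data: since $\rho\asymp|x|^{-2}$ is not integrable, constants (and more generally profiles bounded below) are not dominated by any $u_0\in L^m_\rho$, so $\psi$ must decay; but then at large $|x|$ the required inequality $\rho\,(\eta'\psi-\eta^p\psi^p)\le \eta^m\Delta\psi^m$ breaks down, because with $\psi^m\sim|x|^{-\gamma}$ the right-hand side decays like $|x|^{-\gamma-2}$ while the left-hand side only like $|x|^{-2-\gamma/m}$ (and vanishes identically in the harmonic case $\gamma=N-2$), so the reaction cannot control the transient uniformly in space at any fixed large time. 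Pointwise blowup at \emph{every} $x$ must instead be produced by an expanding support, i.e.\ your ``travelling-front'' alternative, and this is what the paper does: $w=C(T+t)^{\alpha}\bigl[1-\mathfrak{s}(x)\,(T+t)^{-\beta}/a\bigr]_{+}^{1/(m-1)}$ with $\mathfrak{s}(x)=\log|x|$ outside $B_e$ and a quadratic cap inside $B_e$ (so that $w^m\in C^1$ across $\partial B_e$), with $0<\alpha<\tfrac1{m-1}$, $\beta=\tfrac{\alpha(m-1)+1}{2}$ as in \eqref{alphabeta}, and $C$, $a$, $T$ large as in \eqref{98b}. The point you would need to supply is that under \eqref{95b} the factor $1/\rho\asymp r^2$ exactly cancels the $r^{-2}$ produced by differentiating $\log r$ twice, so $\tfrac1\rho\Delta w^m$ has time-dependent but spatially bounded coefficients and the subsolution inequality reduces to a one-variable inequality $\sigma(t)F-\delta(t)-\gamma(t)F^{\frac{p+m-2}{m-1}}\le0$ for $F\in(0,1)$, which the choice of $\alpha,\beta$ and $C^{m-1}/a$ large makes valid; comparison with any solution whose (compactly supported, sufficiently large) datum dominates $w(\cdot,0)$ then gives $\lim_{t\to+\infty}u(x,t)=+\infty$ for every $x$.
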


In the case of \it integrable weights \rm one can adapt the methods of \cite{Sacks} to prove a stronger result.

\begin{theorem}\label{teoesistenza3}
Let $\rho$ satisfy \eqref{rho2} and $\rho\in L^1(\mathbb R^N)$. %Assume that the weighted Poincaré inequality  \eqref{P-pesi} holds.
Let $1<p<m$ and $u_0\in{\textrm L}_{\rho}^1(\mathbb R^N)$, $u_0\ge0$. Then problem \eqref{problema2} admits a solution for any $T>0$,  in the sense of Definition \ref{2}. Moreover for any $T>\tau>0$ one has $u\in L^{\infty}(\mathbb R^N\times(\tau,T))$ and there exists $C=C(m,p,N,\|\rho\|_{L^1(\mathbb R^N)})>0$, independent of the initial datum $u_0$, such that, for all $t>0$, one has
\begin{equation}\label{absolute}
\|u(t)\|_{L^{\infty}(\mathbb R^N)}\le C \left\{1+\left[\frac{1}{(m-1)t}\right]^{\frac{1}{m-1}}  \right\}.
\end{equation}

\begin{remark}
\rm \begin{itemize}
\item The bound \eqref{absolute} cannot be replaced by a similar one in which the r.h.s. is replaced by $\frac{C}{(m-1)t}$, that would entail $\|u(t)\|_\infty\to0$ as $t\to+\infty$, as customary e.g. in the case of solutions to the Porous Medium Equation posed in bounded, Euclidean domains (see \cite{V}). In fact, it is possible that \it stationary, bounded solutions \rm to \eqref{problema2} exist, provided a positive bounded solution $U$ to the equation
\begin{equation}\label{nonlin}
-\Delta U=\rho U^a
\end{equation}
exists, where $a=p/m<1$. If this fact holds, $V:=U^\frac1m$ is a stationary, bounded, positive solution to the differential equation in \eqref{problema2}, whose $L^\infty$ norm is of course constant in time. In turn, a celebrated results of \cite{BK} entails that positive, bounded solutions to \eqref{nonlin} exist if e.g. $\rho\asymp |x|^{-2-\epsilon}$ for some $\epsilon>0$ as $|x|\to+\infty$ (in fact, a full characterization of the weights for which this holds is given in \cite{BK}), a condition which is of course compatible with the assumptions of Theorem \ref{teoesistenza3}.
\item Of course, the bound \eqref{smoothingweight}, which gives stronger information when $t\to0$, continues to hold under the assumptions of Theorem \ref{teoesistenza3}.
\end{itemize}
\end{remark}

\end{theorem}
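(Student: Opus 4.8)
The plan is to follow the scheme already used to establish Theorem \ref{teoesistenza2}, working on an exhausting sequence of balls $B_n\subset\mathbb R^N$ with homogeneous Dirichlet conditions, proving uniform $L^\infty$ bounds independent of $n$, and then passing to the limit; the novelty here is that integrability of $\rho$ lets us replace the weighted $L^m$ (or $L^1$) quantity appearing in \eqref{smoothingweight} by an \emph{absolute} constant, at the price of a bound that only decays to a constant as $t\to+\infty$. First I would set up the approximate problems: for each $n$, solve
\[
\rho\,u_t=\Delta u^m+\rho\,u^p\ \text{in }B_n\times(0,T),\qquad u=0\ \text{on }\partial B_n,\qquad u(0)=u_{0,n},
\]
with $u_{0,n}$ a bounded truncation of $u_0$, obtaining nonnegative solutions $u_n$ by standard parabolic theory for nondegenerate approximations and monotone limits, exactly as in the unweighted case. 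The key is then an a priori estimate for $u_n$ that does not see $\|u_{0,n}\|_{L^1_\rho}$.

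The heart of the argument is an energy/Moser iteration on the weighted $L^k_\rho$ norms combined with the \emph{unweighted} Sobolev inequality \eqref{S} (valid on $\mathbb R^N$) and the crucial observation that, because $\rho\in L^1(\mathbb R^N)$, one has the embedding-type bound $\|v\|_{L^r_\rho}\le \|\rho\|_{L^1}^{1/r-1/s}\,\|v\|_{L^s_\rho}$ for $r<s$ by Hölder, so that low weighted norms are controlled by high ones up to a constant depending only on $\|\rho\|_{L^1}$. Concretely, testing the equation with $(u_n^m)^{k-1}$ (suitably), integrating by parts, using $u^p\le u^m$ where $u\ge1$ and $u^p\le 1$ where $u\le1$ (here $p<m$ is used in an essential way, to \emph{absorb} the reaction term into the diffusion term plus a bounded remainder of size $\|\rho\|_{L^1}$), and invoking \eqref{S}, one derives a differential inequality for $y_k(t):=\|u_n(t)\|_{L^{km}_\rho}^{km}$ of the form
\[
\frac{d}{dt}y_k \le -c\, k\, y_k^{1+\theta_k}\,\big(\text{with a correction}\big)+C\,k\,\|\rho\|_{L^1},
\]
whose structure is that of the porous-medium smoothing ODE; the smoothing-from-$L^1$-data phenomenon gives, after iterating $k\to\infty$ in the Moser scheme, a bound of the shape $\|u_n(t)\|_\infty\le C\big(1+[(m-1)t]^{-1/(m-1)}\big)$ with $C$ depending only on $m,p,N,\|\rho\|_{L^1}$ and \emph{not} on $u_{0,n}$. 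The $L^1$-independence is exactly the mechanism by which the Benilan–Crandall type universal bound for the PME (solutions instantaneously forget the size of the datum) survives the presence of the subcritical reaction $u^p$, $p<m$.

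Having the uniform bound \eqref{absolute} for the $u_n$, I would pass to the limit $n\to\infty$: monotonicity in $n$ (comparison on nested balls with ordered data) gives a pointwise limit $u$, the uniform $L^\infty$ bound on $(\tau,T)$ plus the equation give enough compactness (local in $x$, uniform in $t\in[\tau,T]$) to pass to the limit in the weak formulation \eqref{3a}, and for the initial datum one uses $u_{0,n}\uparrow u_0$ in $L^1_\rho$. Existence of the solution for any $T>0$ follows since the bound is $T$-independent. Finally, the last sentence of the statement — that \eqref{smoothingweight} still holds — is immediate because the approximation scheme and test-function manipulations for Theorem \ref{teoesistenza2} apply verbatim (they only used \eqref{P-pesi}, \eqref{S-pesi} and $u_0\in L^m_\rho$, and $L^1_\rho\cap L^\infty$ data truncations lie in $L^m_\rho$), so one simply runs both estimates on the $u_n$ and keeps the better one near $t=0$.

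The main obstacle I anticipate is making the absorption of the reaction term quantitative and \emph{uniform in the Moser exponent $k$}: one must check that the constant $C$ accumulated through the infinitely many iteration steps stays finite and depends on $\|\rho\|_{L^1}$ but not on the datum, which requires carefully tracking how the $\|\rho\|_{L^1}$-factors enter at each step (via the Hölder embedding $L^r_\rho\hookrightarrow L^s_\rho$) and verifying that the resulting series/product of constants converges — this is where the hypothesis $p<m$ and $\rho\in L^1$ must be used in tandem, and where the estimate genuinely differs from the one in Theorem \ref{teoesistenza2}. A secondary technical point is justifying the formal test-function computations on $B_n$ for the degenerate equation, which is handled as usual by first working with the nondegenerate regularization $u_t=\Delta(u^m+\varepsilon u)+\ldots$ and letting $\varepsilon\downarrow 0$.
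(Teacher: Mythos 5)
There is a genuine gap at the heart of your argument: the datum-independent bound with the rate $t^{-1/(m-1)}$ is asserted, not derived, and the mechanism you propose for it does not close. Your plan rests on the differential inequality $\frac{d}{dt}y_k \le -c\,k\,y_k^{1+\theta_k}+C\,k\,\|\rho\|_{L^1}$, obtained by ``absorbing'' the reaction via $u^{p+k-1}\le u^{m+k-1}+1$ into the diffusion term. But after testing with $u^{k-1}$ the dissipation is $\frac{4mk(k-1)}{(m+k-1)^2}\int|\nabla u^{\frac{m+k-1}{2}}|^2\,dx$, while the term to be absorbed is $k\int u^{m+k-1}\rho\,dx$, and the weighted Sobolev plus H\"older (finite measure $\rho\,dx$) only give $\int u^{m+k-1}\rho\,dx\le \|\rho\|_{L^1}^{2/N}C_s^{-2}\int|\nabla u^{\frac{m+k-1}{2}}|^2dx$; hence absorption would require $k\,\|\rho\|_{L^1}^{2/N}\lesssim C_s^{2}$, which fails as soon as $k$ is large, i.e.\ exactly along the Moser iteration. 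Replacing absorption by Young's inequality produces a full power of $\int u^{(m+k-1)2^*/2}\rho\,dx$, which cannot be dominated by the Sobolev quantity raised to the power $2/2^*<1$. You flag this uniformity-in-$k$ issue as the ``main obstacle'', but resolving it \emph{is} the proof: without it there is no reason the iterated constant is independent of $\|u_{0}\|_{L^1_\rho}$, and a genuine $L^1$-smoothing scheme would in any case yield a bound proportional to a power of $\|u_0\|_{L^1_\rho}$ with a different time exponent (compare the exponents in \eqref{smoothingweight}), not the absolute estimate \eqref{absolute}.

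The paper obtains \eqref{absolute} by a different mechanism, which your proposal never invokes: the Aronson--B\'enilan inequality $-\Delta u^m(\cdot,t)\le \rho\,u^p(\cdot,t)+\frac{\rho}{(m-1)t}\,u(\cdot,t)$ (Proposition \ref{prop2a}) converts the problem, at each fixed time, into an elliptic one; a Stampacchia/De Giorgi level-set lemma in which the Chebyshev step is replaced by the trivial bound $\mu_\rho(A_k)\le\|\rho\|_{L^1}$ for \emph{every} level $k$ (Lemma \ref{lemma1pesi}, Lemma \ref{prop1-pesi}) then gives an $L^\infty$ bound whose constants depend only on $C_s$, $s$ and $\|\rho\|_{L^1}$; finally the residual dependence on $\|u(t)\|_{L^q_\rho}$ is eliminated by letting the elliptic integrability exponents $r_1,r_2\to\infty$ (Proposition \ref{teo2pesi}), which is where the independence of the initial datum actually comes from, the factor $\gamma=\frac1{(m-1)t}$ producing the $t^{-1/(m-1)}$ rate. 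Your approximation-and-monotone-limit skeleton, and your observation that $\rho\in L^1$ lets lower weighted norms be controlled by higher ones, are consistent with the paper; but to complete your route you would have to either prove the claimed $k$-uniform differential inequality (which, as written, is false in the naive form) or import the Aronson--B\'enilan/elliptic step, i.e.\ essentially switch to the paper's argument. A minor additional point: in the final passage to the limit the paper also needs the uniform $L^1_\rho$ bound and the estimate on $\int_0^1\int u^{p+\varepsilon}\rho\,dx\,dt$ to make sense of the weak formulation \eqref{3a} for data only in $L^1_\rho$; your sketch should account for the integrability of the nonlinear terms, since $u_0\notin L^m_\rho$ in general.
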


\section{Auxiliary results for elliptic problems}\label{elliptic}

Let $x_0,x \in M$ be given. We denote by $r(x)=\textrm{dist}\,(x_0,x)$ the Riemannian distance between $x_0$ and $x$. Moreover, we let $$B_R(x_0):=\{x\in M, \textrm{dist}\,(x_0,x)<R\}$$ be the geodesics ball with center $x_0 \in M$ and radius $R > 0$. Let $x_0\in M$ any fixed reference point. We set $B_R\equiv B_R(x_0)\,.$ As mentioned above, we denote by $\mu$ the Riemannian measure on $M$.

For any given function $v$, we define for any $k\in\R^+$
\begin{equation*}%\label{31}
T_k(v):=\begin{cases} &k\quad \text{if}\,\,\, v\ge k \\ &v \quad \text{if}\,\,\, |v|< k \\ &-k\quad \text{if}\,\,\, v\le -k\end{cases}\,\,.
\end{equation*}

For every $R>0, k>0,$ consider the problem
\begin{equation}\label{problemapalla}
\begin{cases}
\, u_t= \Delta u^m +\, T_k(u^p) & \text{in}\,\, B_R\times (0,+\infty) \\
u=0 &\text{in}\,\, \partial B_R\times (0,+\infty)\\
u=u_0 &\text{in}\,\, B_R\times \{0\}, \\
\end{cases}
\end{equation}
where $u_0\in L^\infty(B_R), u_0\geq 0$.
Solutions to problem \eqref{problemapalla} are meant in the weak sense as follows.

\begin{definition}\label{4}
Let $p<m$. Let $u_0\in L^\infty(B_R), u_0\geq 0$. We say that a nonnegative function $u$ is a solution to problem \eqref{problemapalla} if
$$
u\in L^{\infty}(B_R\times(0,+\infty)), u^m\in L^2\big((0, T); H^1_0(B_R)\big) \quad \textrm{ for any }\, T>0,
$$
and for any $T>0, \varphi \in C_c^{\infty}(B_R\times[0,T])$ such that $\varphi(x,T)=0$ for every $x\in B_R$, $u$ satisfies the equality:
\begin{equation*}
\begin{aligned}
-\int_0^T\int_{B_R} \,u\,\varphi_t\,d\mu\,dt =&- \int_0^T\int_{B_R} \langle \nabla u^m, \nabla \varphi \rangle \,d\mu\,dt\,+ \int_0^T\int_{B_R} \,T_k(u^p)\,\varphi\,d\mu\,dt \\
& +\int_{B_R} \,u_0(x)\,\varphi(x,0)\,d\mu.
\end{aligned}
\end{equation*}
\end{definition}

We also consider elliptic problems of the type
\begin{equation}\label{pbella}
\begin{cases}
-\Delta u &= f \quad \textrm{ in }\,\, B_R\\
\;\quad u & = 0 \quad \textrm{ in }\,\, \partial B_R\,,
\end{cases}
\end{equation}
with $f\in L^q(B_R)$ for some $q>1$.

\begin{definition}\label{defpbell}
We say that $u\in H^1_0(B_R), u\geq 0$ is a weak subsolution to problem \eqref{pbella} if
\[\int_{B_R}\langle \nabla u, \nabla \varphi \rangle\, d\mu \leq \int_{B_R} f\varphi\, d\mu,\]
for any $\varphi\in H^1_0(B_R), \varphi\geq 0$\,.
\end{definition}

The following proposition contains an estimate in the spirit of the celebrated $L^\infty$ estimate of Stampacchia (see, e.g., \cite{KS}, \cite{BC} and references therein). However, the obtained bound and the proof are different. This is due to the fact that we need an estimate independent of the measure of $B_R$, in order to let $R\to +\infty$ when we apply such estimate in the proof of global existence for problem \eqref{problema} (see Remark \ref{remark2} below). Indeed recall that, obviously, since $M$ has infinite measure, $\mu(B_R)\to +\infty$ as $R\to +\infty$.

\begin{proposition}\label{prop1}
Let $f_1\in L^{m_1}(B_R)$ and $f_2\in L^{m_2}(B_R)$ where $m_1>\frac N 2$, $m_2 >\frac{N}{2}\,.$  Assume that $v\in H_0^1(B_R)$, $v\ge 0$ is a subsolution to problem
\begin{equation}\label{25}
\begin{cases}
-\Delta v = (f_1+f_2) & \text{in}\,\, B_R\\
v=0 &\text{on}\,\, \partial B_R
\end{cases}.
\end{equation}
in the sense of Definition \ref{defpbell}. Let $\bar k>0$. Then

\begin{equation}\label{eqa3}
\|v\|_{L^{\infty}(B_R)}\le \left\{C_1\|f_1\|_{L^{m_1}(B_R)}+ C_2\|f_2\|_{L^{m_2}(B_R)} \right\}^{\frac{1}{s}} \|v\|_{L^{1}(B_R)}^{\frac{s-1}{s}}+\bar k,
\end{equation}
where
\begin{equation}\label{37b}
s=1+\frac{2}{N}-\frac{1}{l}\,,
\end{equation}
\begin{equation}\label{l}
 \frac{N}{2}<l<\min\{m_1\,,m_2\},
 \end{equation}
  \begin{equation}\label{barC}
 \overline C_1=\left(\frac{s}{s-1}\right)^{s} \frac{1}{C_s^2}\left(\frac{2}{\bar k}\right)^{\frac{1}{l}-\frac{1}{m_1}}\,,
\quad
\overline C_2=\left(\frac{s}{s-1}\right)^{s} \frac{1}{C_s^2}\left(\frac{2}{\bar k}\right)^{\frac{1}{l}-\frac{1}{m_2}}\,,
 \end{equation}
 and
\begin{equation}\label{38}
\begin{aligned}
C_1=\overline C_1\,\|v\|_{L^1(B_R)}^{\frac{1}{l}-\frac{1}{m_1}}, \ \ \ C_2=\overline C_2\,\|v\|_{L^1(B_R)}^{\frac{1}{l}-\frac{1}{m_2}}\,.
\end{aligned}
\end{equation}
\end{proposition}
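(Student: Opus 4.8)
The plan is to run a Stampacchia-type truncation argument, but arranged so that the Lebesgue measure $\mu(B_R)$ never enters the estimate — only the Sobolev constant $C_s$ and the $L^1$ norm of $v$ do. For $k\ge\bar k$ set $A_k=\{x\in B_R: v(x)>k\}$ and use $(v-k)_+\in H^1_0(B_R)$ as a test function in the subsolution inequality of Definition \ref{defpbell}. This gives
\[
\int_{B_R}|\nabla (v-k)_+|^2\,d\mu \le \int_{A_k}(f_1+f_2)(v-k)_+\,d\mu .
\]
On the left-hand side I would apply the Sobolev inequality \eqref{S} to $(v-k)_+$, obtaining $C_s^2\|(v-k)_+\|_{L^{2^*}}^2$ as a lower bound. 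On the right-hand side I would estimate $f_i$ on $A_k$ by Hölder: since $m_i>l$, write $\|f_i\|_{L^l(A_k)}\le \|f_i\|_{L^{m_i}(B_R)}\,\mu(A_k)^{\frac1l-\frac1{m_i}}$, and then bound $\mu(A_k)$ \emph{not} by the measure of the ball but via Chebyshev at level $\bar k$: $\mu(A_k)\le \mu(A_{\bar k})\le \bar k^{-1}\|v\|_{L^1(B_R)}$ (this is where the factors $(2/\bar k)^{1/l-1/m_i}$ and $\|v\|_{L^1}^{1/l-1/m_i}$ in \eqref{barC}, \eqref{38} come from — the $2$ presumably absorbs a dyadic step, see below). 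Combining, and using Hölder once more to pair $\|f_i\|_{L^l}$ with $\|(v-k)_+\|_{L^{2^*}}$ and a power of $\mu(A_k)$, one reaches an inequality of the form
\[
\bigl(C_s^2 - \text{(small)}\bigr)\,\|(v-k)_+\|_{L^{2^*}}^2 \le \Bigl(C_1\|f_1\|_{L^{m_1}}+C_2\|f_2\|_{L^{m_2}}\Bigr)\,\|(v-k)_+\|_{L^1(A_k)}\,\mu(A_k)^{\theta}
\]
for a suitable exponent $\theta$ tied to $s$ via \eqref{37b}.

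Next I would set up the standard nonlinear iteration. Define $\varphi(k)=\|(v-k)_+\|_{L^1(B_R)}$ (or $=\mu(A_k)$, according to which normalization makes the bookkeeping cleanest) for $k\ge\bar k$; this is nonincreasing. From the display above, using $\|(v-k)_+\|_{L^1}\ge (h-k)\mu(A_h)$ for $h>k\ge\bar k$ and interpolating the $L^1$ and $L^{2^*}$ norms of $(v-k)_+$, one derives a recursive inequality
\[
\varphi(h)\le \frac{C\,\bigl(C_1\|f_1\|_{L^{m_1}}+C_2\|f_2\|_{L^{m_2}}\bigr)}{(h-k)^{\gamma}}\,\varphi(k)^{1+\varepsilon}
\]
with $\gamma,\varepsilon>0$ determined by $N$, $l$ and $2^*$; the exponent structure is exactly what forces $s=1+\tfrac2N-\tfrac1l$ as in \eqref{37b}. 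Then I would invoke the classical lemma (Stampacchia's lemma, as in \cite{KS}) that such a recursion implies $\varphi$ vanishes for $k\ge \bar k + d$, where $d^{\gamma}\asymp \bigl(C_1\|f_1\|_{L^{m_1}}+C_2\|f_2\|_{L^{m_2}}\bigr)\varphi(\bar k)^{\varepsilon}$, and $\varphi(\bar k)\le \|v\|_{L^1(B_R)}$. Unwinding the exponents, $d$ is precisely $\bigl\{C_1\|f_1\|_{L^{m_1}}+C_2\|f_2\|_{L^{m_2}}\bigr\}^{1/s}\|v\|_{L^1}^{(s-1)/s}$, and $\|v\|_{L^\infty}\le \bar k + d$, which is \eqref{eqa3}.

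The step I expect to be the main obstacle — and the one requiring the most care — is tracking the exponents so that the \emph{numerical} constant in front is genuinely independent of $R$ and of $\mu(B_R)$. The point is that the only place the domain's measure could intrude is through estimating $\mu(A_k)$; replacing the naive $\mu(A_k)\le\mu(B_R)$ by the Chebyshev bound at the fixed threshold $\bar k$ is what saves the argument, at the cost of the extra factors $\|v\|_{L^1}^{1/l-1/m_i}$ and the $\bar k$-dependent prefactors in \eqref{barC}–\eqref{38}. One must also check that the Hölder exponents are admissible, i.e. that the choice $\tfrac N2 < l < \min\{m_1,m_2\}$ makes $1+\tfrac2N-\tfrac1l>1$ so the iteration exponent $1+\varepsilon$ genuinely exceeds $1$ (this is where $m_1,m_2>N/2$ is used), and that $(v-k)_+$ is a legitimate test function, which holds since $v\in H^1_0(B_R)$ and $v\ge0$. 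Everything else is the routine Stampacchia machinery; the novelty is purely in the measure-free normalization.
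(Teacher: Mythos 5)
Your proposal follows the paper's strategy almost verbatim in its main body: you test the subsolution inequality of Definition \ref{defpbell} with the truncation $(v-k)_+$ (the paper's $G_k(v)$), apply the Sobolev inequality on the left, H\"older on the right, and---exactly as in the paper---replace the naive bound $\mu(A_k)\le\mu(B_R)$ by monotonicity plus Chebyshev at the fixed level $\bar k$, $\mu(A_k)\le\mu(A_{\bar k})\le \bar k^{-1}\|v\|_{L^1(B_R)}$, which is precisely what makes the final estimate independent of $R$; this yields the level-set inequality $\int_{B_R}(v-k)_+\,d\mu\le C\,\mu(A_k)^{s}$ for $k\ge\bar k$, with $s$ as in \eqref{37b}. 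The only genuine divergence is the last step: the paper converts this into the $L^\infty$ bound via the ``continuous'' version of the argument (Lemmas \ref{lemma0} and \ref{lemma1}), differentiating $g(k)=\int_{B_R}(v-k)_+\,d\mu$, using $g'(k)=-\mu(A_k)$ and integrating the resulting differential inequality, which is what produces the exact factor $\frac{s}{s-1}$ (raised to the power $s$) in \eqref{barC}; you instead invoke the classical two-level Stampacchia iteration. That route does work, but two details need care. First, you should take $\varphi(k)=\|(v-k)_+\|_{L^1(B_R)}$ (not $\mu(A_k)$): then $\varphi(h)\le C(h-k)^{-s}\varphi(k)^{s}$ and the dyadic lemma gives $\varphi(\bar k+d)=0$ with $d\le 2^{s/(s-1)}C^{1/s}\|v\|_{L^1}^{(s-1)/s}$, i.e. exactly the form \eqref{eqa3}; with $\varphi=\mu(A_k)$ the recursion has exponent $\gamma=1$ and unwinds to a bound of the shape $C\|v\|_{L^1}^{s-1}\bar k^{1-s}$, which is not the claimed one. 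Second, your word ``precisely'' overreaches: the dyadic lemma yields the prefactor $2^{s/(s-1)}$ in place of $\frac{s}{s-1}$, so it does not reproduce the constants $\overline C_1,\overline C_2$ of \eqref{barC} literally, only up to a harmless numerical factor---to get the stated constants one should use the ODE-type level-set lemma as the paper does. (Minor point: no absorption ``$C_s^2-\varepsilon$'' is needed; the right-hand side is linear in $\|(v-k)_+\|_{L^{2^*}}$ while the left is quadratic, so one simply divides, as in \eqref{34}.) For the way the proposition is used later in the paper only the form of the bound and its $R$-independence matter, so your argument suffices for that purpose.
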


\begin{remark}\label{remark2}
\rm If in Proposition \ref{prop1} we further assume that there exists a constant $k_0>0$ such that
\[\max(\|v \|_{L^1(B_R)}, \|f_1 \|_{L^{m_1}(B_R)},\|f_2 \|_{L^{m_2}(B_R)}) \leq k_0 %\quad \|f_1 \|_{L^{m_1}(B_R)}\leq k_0, \quad \|f_2 \|_{L^{m_2}(B_R)}\leq k_0
\quad \textrm{ for all }\,\, R>0, \]
then from \eqref{eqa3}, we infer that the bound from above on $\|v\|_{L^{\infty}(B_R)}$ is independent of $R$. This fact will
have a key role in the proof of global existence for problem \eqref{problema}.
\end{remark}

\subsection{Proof of Proposition \ref{prop1}}
Let us first define
\begin{equation}\label{21}
G_k(v):=v-T_k(v) \,\,,
\end{equation}
\begin{equation*}%\label{22}
g(k):= \int_{B_R}|G_k(v)|\,d\mu.
\end{equation*}
For any $R>0$, for $v\in L^1(B_R)$, we set
\begin{equation}\label{23ab}
A_k:= \{x\in B_R:\,|v(x)|>k\}.
\end{equation}
%We will consider the cases $D=B_R$  and $D=M$\,.
We first state two technical Lemmas.
\begin{lemma}\label{lemma0}
Let $v\in L^1(B_R)$. Then $g(k)$ is differentiable almost everywhere in $(0,+\infty)$ and $$g'(k)=-\mu(A_{k}).$$
\end{lemma}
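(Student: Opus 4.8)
The plan is to recognise $g$ as the once-integrated distribution function of $|v|$ and then invoke standard real-variable facts. First I would record that, directly from the definition of $T_k$, for every $x\in B_R$ one has $|G_k(v)(x)|=(|v(x)|-k)_+$, so that
\[
g(k)=\int_{B_R}\big(|v|-k\big)_+\,d\mu .
\]
Since $(|v(x)|-k)_+=\int_k^{+\infty}\chi_{\{s<|v(x)|\}}\,ds$, Tonelli's theorem gives the representation
\[
g(k)=\int_k^{+\infty}\mu(A_s)\,ds ,
\]
with $A_s=\{x\in B_R:|v(x)|>s\}$ as in \eqref{23ab}. Here the hypothesis $v\in L^1(B_R)$ is used exactly to guarantee that $\mu(A_s)<+\infty$ for every $s>0$ (Chebyshev) and that $s\mapsto\mu(A_s)$ is integrable on $(0,+\infty)$ — indeed $\int_0^{+\infty}\mu(A_s)\,ds=\|v\|_{L^1(B_R)}<+\infty$ — so in particular $g(k)<+\infty$ for all $k>0$.

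Then I would conclude by the fundamental theorem of calculus: since $s\mapsto\mu(A_s)$ belongs to $L^1_{\mathrm{loc}}((0,+\infty))$, the function $k\mapsto g(k)=\int_k^{+\infty}\mu(A_s)\,ds$ is locally absolutely continuous on $(0,+\infty)$ and differentiable at every Lebesgue point of $\mu(A_\cdot)$, hence almost everywhere, with $g'(k)=-\mu(A_k)$ there. (Being monotone nonincreasing, $\mu(A_\cdot)$ is continuous outside an at most countable set, and at each of its continuity points the value of the integrand is recovered as the derivative of its primitive.)

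If one prefers to bypass Tonelli, the same conclusion follows from a direct computation of the incremental quotient. For $h>0$, splitting $B_R$ according to whether $|v|\le k$, $k<|v|\le k+h$, or $|v|>k+h$, one finds
\[
g(k)-g(k+h)=\int_{\{k<|v|\le k+h\}}(|v|-k)\,d\mu+h\,\mu(A_{k+h}),
\]
whence, since the first integral lies between $0$ and $h\,(\mu(A_k)-\mu(A_{k+h}))$,
\[
\mu(A_{k+h})\ \le\ \frac{g(k)-g(k+h)}{h}\ \le\ \mu(A_k);
\]
letting $h\to0^+$ and using that $s\mapsto\mu(A_s)$ is right-continuous (because $A_{k+h}\uparrow A_k$ as $h\downarrow 0$, by continuity of $\mu$ from below) together with the a.e.\ differentiability of the monotone function $g$ yields $g'(k)=-\mu(A_k)$ a.e. There is no genuine obstacle here: this is a routine measure-theoretic statement, and the only ingredient actually required is $v\in L^1(B_R)$, needed to ensure finiteness and local integrability of $\mu(A_\cdot)$.
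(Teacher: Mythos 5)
Your argument is correct. The paper itself omits the proof, stating it is identical to the one in \cite{BC}, and your second, incremental-quotient argument is exactly that standard proof: the identity $g(k)-g(k+h)=\int_{\{k<|v|\le k+h\}}(|v|-k)\,d\mu+h\,\mu(A_{k+h})$, the two-sided bound by $h\,\mu(A_{k+h})$ and $h\,\mu(A_k)$, continuity of $\mu$ from below, and Lebesgue's a.e.\ differentiability of the monotone function $g$. Your first route via the layer-cake formula $g(k)=\int_k^{+\infty}\mu(A_s)\,ds$ is an equally valid (and slightly stronger) variant, since it yields local absolute continuity of $g$ in addition to the a.e.\ identity $g'(k)=-\mu(A_k)$; the use of $v\in L^1(B_R)$ to guarantee $\mu(A_s)<+\infty$ and integrability of $s\mapsto\mu(A_s)$ is exactly where the hypothesis enters.
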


We omit the proof since it is identical to the one given in \cite{BC}.

\begin{lemma}\label{lemma1}
Let $v\in L^1(B_R)$. Let $\overline k>0$. Suppose that there exist $C>0$ and $s>1$ such that
\begin{equation}\label{23}
g(k)\le C\mu(A_k)^{s} \quad \text{for any}\,\,k\ge \bar k.
\end{equation}
Then $v\in L^{\infty}(B_R)$ and
\begin{equation}\label{eqa1}
\|v\|_{L^{\infty}(B_R)}\le C^{\frac{1}{s}}\frac{s}{s-1}\|v\|_{L^{1}(B_R)}^{1-\frac{1}{s}}+\bar k.
\end{equation}
\end{lemma}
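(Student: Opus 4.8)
The plan is to run a Stampacchia-type argument in its continuous, differential-inequality form on the function $g$. First I would record the elementary properties of $g(k)=\int_{B_R}(|v|-k)_+\,d\mu$ that are needed: it is nonnegative and nonincreasing, it satisfies $g(k)\le\|v\|_{L^1(B_R)}$ for every $k\ge0$ (so in particular $g(\bar k)\le\|v\|_{L^1(B_R)}$), and — since $0\le g(k)-g(h)\le(h-k)\,\mu(A_k)$ for $k<h$, while $\mu(A_k)\le\bar k^{-1}\|v\|_{L^1(B_R)}<+\infty$ for $k\ge\bar k$ — it is Lipschitz, hence absolutely continuous, on $[\bar k,+\infty)$. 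Together with Lemma \ref{lemma0}, which gives $g'(k)=-\mu(A_k)$ for a.e. $k$, this makes the fundamental theorem of calculus available on $[\bar k,+\infty)$.

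Next I would rewrite the hypothesis $g(k)\le C\,\mu(A_k)^{s}$ as the differential inequality
\[
-g'(k)=\mu(A_k)\ \ge\ C^{-1/s}\,g(k)^{1/s}\qquad\text{for a.e. }k\ge\bar k .
\]
Setting $k^{*}:=\sup\{k\ge\bar k:\ g(k)>0\}\in[\bar k,+\infty]$, on every compact subinterval of $[\bar k,k^{*})$ the map $g$ is Lipschitz and bounded away from $0$, so $k\mapsto g(k)^{1-1/s}$ is absolutely continuous there, with derivative $\big(1-\tfrac1s\big)g(k)^{-1/s}g'(k)\le-\big(1-\tfrac1s\big)C^{-1/s}$ for a.e. such $k$. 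Integrating from $\bar k$ to $k\in[\bar k,k^{*})$ yields
\[
0\le g(k)^{1-1/s}\le g(\bar k)^{1-1/s}-\frac{s-1}{s}\,C^{-1/s}(k-\bar k),
\]
so $k-\bar k\le\frac{s}{s-1}\,C^{1/s}\,g(\bar k)^{1-1/s}$ for all $k<k^{*}$, whence, using $g(\bar k)\le\|v\|_{L^1(B_R)}$,
\[
k^{*}\le\bar k+\frac{s}{s-1}\,C^{1/s}\,\|v\|_{L^1(B_R)}^{1-\frac1s}.
\]

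Finally I would translate this back: by continuity and monotonicity of $g$ one has $g(k)=0$ for every $k\ge k^{*}$, and $g(k)=0$ says precisely that $(|v|-k)_+=0$ $\mu$-a.e., i.e. $|v|\le k$ a.e.; hence $v\in L^{\infty}(B_R)$ with $\|v\|_{L^{\infty}(B_R)}\le k^{*}$, which is exactly \eqref{eqa1}. I expect the only genuinely delicate point — the (mild) crux of the argument — to be the regularity bookkeeping: verifying that $g$ is absolutely continuous so that the differential inequality may legitimately be integrated, and that the chain rule for $g^{1-1/s}$ holds a.e. on $\{g>0\}$; both follow from the Lipschitz bound above and Lemma \ref{lemma0}. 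Should one prefer to bypass this, the same estimate is obtained by the classical discrete Stampacchia iteration: with levels $k_{n}=\bar k+d(1-2^{-n})$ and $d:=\frac{s}{s-1}C^{1/s}\|v\|_{L^1(B_R)}^{1-1/s}$, the hypothesis produces a recursion forcing $g(k_{n})\to0$, hence $\|v\|_{L^\infty(B_R)}\le\bar k+d$; I would keep the continuous version as it is shorter.
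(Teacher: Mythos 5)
Your proposal is correct and follows essentially the same route as the paper: you turn the hypothesis into the differential inequality $-g'(k)\ge C^{-1/s}g(k)^{1/s}$ via Lemma \ref{lemma0}, integrate $g^{1-1/s}$ from $\bar k$, and read off the level $k_0=\bar k+\frac{s}{s-1}C^{1/s}\|v\|_{L^1(B_R)}^{1-1/s}$ beyond which $g$ vanishes. Your extra bookkeeping (Lipschitz continuity of $g$ on $[\bar k,\infty)$ and the definition of $k^*$) only makes explicit what the paper's integration step uses implicitly, so no further changes are needed.
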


\begin{remark}%\label{remark1}
\rm Observe that if $C$ in \eqref{23} does not depend on $R$ and, for some $k_0>0$,
\[\| v\|_{L^1(B_R)}\leq k_0 \quad \textrm{ for all }\,\, R>0,\]
then, in view of the estimate \eqref{eqa1},  the bound on $\|v\|_{L^{\infty}(B_R)}$ is independent of $R$.
\end{remark}

\begin{proof}[Proof of Lemma \ref{lemma1}]
Thanks to Lemma \ref{lemma0} together with hypotheses \eqref{23} we have that
$$
g'(k)=-\mu(A_{k})\le -\left [C^{-1}\,g(k)\right]^{\frac 1{s}},
$$
hence
$$
g(k) \le C\,[-g'(k)]^{s}.
$$
Integrating between $\bar k$ and $k$ we get
\begin{equation}\label{24b}
\int_{\bar k}^k\left(-\frac 1{C^{\frac 1 s}}\right )\, d\tau \ge \int_{\bar k}^k g'(\tau)\,g(\tau)^{-\frac{1}{s}} \, dg,
\end{equation}
that is:
$$
-C^{-\frac 1 s}( k-\bar k) \ge \frac{s}{s-1} \left [g(k)^{1-\frac 1 s} - g(\bar k)^{1-\frac 1 s}  \right].
$$
Using the definition of $g$, this can be rewritten as
$$
\begin{aligned}
g(k)^{1-\frac 1 s} &\le g\left(\bar k\right)^{1-\frac 1 s} - \frac{s-1}{s}\,C^{-\frac 1 s} (k-\bar k)\,\\
&\le \|v\|_{L^1(B_R)}^{1-\frac 1 s} - \frac{s-1}{s}\,C^{-\frac 1 s} (k-\bar k) \quad \text{for any}\,\, k>\bar k.
\end{aligned}
$$
Choose $$k=k_0=C^{\frac 1 s}\|v\|_{L^1(B_R)}^{1-\frac 1 s}\frac{s}{s-1}+\bar k,$$ and substitute it in the last inequality. Then
$g(k_0)\le0.$
Due to the definition of $g$ this is equivalent to
$$
\int_{B_R}|G_{k_0}(v)|\,d\mu =0 \,\,\,\iff \,\,\, |G_{k_0}(v)|=0 \,\,\,\iff \,\,\, |v|\le k_0.
$$
%Then in particular this is true for the supremum of $v$ hence we write,
As a consequence we have
\begin{equation*}%\label{24}
\|v\|_{L^{\infty}(B_R)}\le k_0 = \frac{s}{s-1}C^{\frac 1 s}\|v\|_{L^1(B_R)}^{1-\frac 1 s}+\bar k.
\end{equation*}
\end{proof}

\begin{proof}[Proof of Proposition \ref{prop1}]
Take $G_k(v)$ as in \eqref{21} and $A_k$ as in \eqref{23ab}. From now one we write, with a slight abuse of notation, $$\|f\|_{L^q(B_R)}=\|f\|_{L^q}\,.$$ Since $G_k(v)\in H^1_0(B_R)$ and $G_k(v)\geq 0$, we can take $G_k(v)$ as test function in problem \eqref{25}. Then, by means of \eqref{S}, we get
\begin{equation}\label{32}
\begin{aligned}
\int_{B_R}\nabla u\cdot \nabla G_k(v)\, d\mu &\ge \int_{A_k}|\nabla v|^2\,d\mu \\
&\ge \int_{B_R}|\nabla G_k(v)|^2\,d\mu \\
&\ge C_s^2\left(\int_{B_R}| G_k(v)|^{2^*}\,d\mu\right )^{\frac{2}{2^*}}\,.
\end{aligned}
\end{equation}
If we now integrate on the right hand side of \eqref{25}, thanks to H\"older inequality, we get
\begin{equation}\label{33}
\begin{aligned}
\int_{B_R}(f_1+f_2)\,G_k(v)\,d\mu &= \int_{A_k}f_1\,G_k(v)\,d\mu + \int_{A_k}f_2\,G_k(v)\,d\mu \\
&\le\left(\int_{A_k}|G_k(v)|^{2^*}\,d\mu\right)^{\frac{1}{2^*}}\left [\left(\int_{A_k}|f_1|^{\frac{2N}{N+2}}\,d\mu\right)^{\frac{N+2}{2N}} + \left(\int_{A_k}|f_2|^{\frac{2N}{N+2}}\,d\mu\right)^{\frac{N+2}{2N}} \right]\\
&\le\left(\int_{B_R}|G_k(v)|^{2^*}\,d\mu\right)^{\frac{1}{2^*}}\left [\|f_1\|_{L^{m_1}}\mu(A_k)^{\frac{N+2}{2N}\left(1-\frac{2N}{m_1(N+2)}\right)}\right.\\
&\left.+ \|f_2\|_{L^{m_2}}\mu(A_k)^{\frac{N+2}{2N}\left(1-\frac{2N}{m_2(N+2)}\right)} \right]\,.
\end{aligned}
\end{equation}
Combining \eqref{32} and \eqref{33} we have
\begin{equation}\label{34}
\begin{aligned}
C_s^2\left(\int_{B_R}|G_k(v)|^{2^*}\,d\mu\right)^{\frac{1}{2^*}}
&\le\left [\|f_1\|_{L^{m_1}}\mu(A_k)^{\frac{N+2}{2N}\left(1-\frac{2N}{m_1(N+2)}\right)}\right.\\&\left.+ \|f_2\|_{L^{m_2}}\mu(A_k)^{\frac{N+2}{2N}\left(1-\frac{2N}{m_2(N+2)}\right)} \right]\,.
\end{aligned}
\end{equation}
Observe that
\begin{equation}\label{35}
\int_{B_R}|G_k(v)|\,d\mu\le \left(\int_{B_R}|G_k(v)|^{2^*}\,d\mu\right)^{\frac{1}{2^*}} \mu(A_k)^{\frac{N+2}{2N}}\,.
\end{equation}
We substitute \eqref{35} in \eqref{34} and we obtain
\begin{equation*}%\label{36}
\int_{B_R}|G_k(v)|\,d\mu\le\frac{1}{C_s^2}\left[\|f_1\|_{L^{m_1}}\mu(A_k)^{1+\frac{2}{N}-\frac{1}{m_1}}+\|f_2\|_{L^{m_2}}\mu(A_k)^{1+\frac{2}{N}-\frac{1}{m_2}}\right].
\end{equation*}
Using the definition of $l$ in \eqref{l}, for any $k\ge \overline k$, we can write
\begin{equation}\label{37}
\begin{aligned}
\int_{B_R}|G_k(v)|\,d\mu&\le\frac{1}{C_s^2}\,\mu(A_k)^{1+\frac{2}{N}-\frac{1}{l}}\left[\|f_1\|_{L^{m_1}}\mu(A_{\overline k})^{\frac{1}{l}-\frac{1}{m_1}}+\|f_2\|_{L^{m_2}}\mu(A_{\overline k})^{\frac{1}{l}-\frac{1}{m_2}}\right]
\end{aligned}
\end{equation}
Set
\begin{equation*}%\label{39}
C=\frac{1}{C_s^2}\left[\|f_1\|_{L^{m_1}}\left(\frac{2}{\bar k}\|v\|_{L^1(B_R)}\right)^{\frac{1}{l}-\frac{1}{m_1}}+\|f_2\|_{L^{m_2}}\left(\frac{2}{\bar k}\|v\|_{L^1(B_R)}\right)^{\frac{1}{l}-\frac{1}{m_2}}\right]\,.
\end{equation*}
Hence, by means of Chebychev inequality, \eqref{37} reads, for any $k\ge \bar k$,
\begin{equation}\label{37a}
\int_{B_R}|G_k(v)|\,d\mu \le C\,\mu(A_k)^{s}\,,
\end{equation}
where $s$ has been defined in \eqref{37b}. Now, \eqref{37a} corresponds to the hypotheses of Lemma \ref{lemma1}, hence the thesis of such lemma follows and we have
\begin{equation*}
\|v\|_{L^{\infty}}\le  \frac{s}{s-1}C^{\frac{1}{s}}\,\|v\|_{L^1}^{1-\frac{1}{s}}+\bar k\,.
\end{equation*}
Then the thesis follows thanks to \eqref{38}.
\end{proof}

\section{$L^q$ and smoothing estimates}\label{Lp}

\begin{lemma}\label{lemma3}
Let $1<p< m$. Let $M$ be such that inequality \eqref{P} holds. Suppose that $u_0\in L^{\infty}(B_R)$, $u_0\ge0$. Let $u$ be the solution of problem \eqref{problemapalla}. Then, for any $1<q<+\infty$, for some constant $C=C(q)>0$,
one has
\begin{equation}\label{47}
\|u(t)\|_{L^q(B_R)} \le e^{C(q) t}\|u_0\|_{L^q(B_R)}\quad \textrm{ for all }\,\, t>0\,.
\end{equation}
\end{lemma}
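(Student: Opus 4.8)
The plan is to establish \eqref{47} by an energy estimate that produces a linear Gr\"onwall inequality for $t\mapsto\|u(t)\|_{L^q(B_R)}^q$. Formally, one multiplies the equation in \eqref{problemapalla} by $q\,u^{q-1}$ and integrates over $B_R$; since $u=0$ on $\partial B_R$ there is no boundary term, and one is led to
\[
\frac{d}{dt}\int_{B_R} u^q\,d\mu \;=\; -\,\frac{4mq(q-1)}{(q+m-1)^2}\int_{B_R}\bigl|\nabla u^{\frac{q+m-1}{2}}\bigr|^2\,d\mu \;+\; q\int_{B_R} u^{q-1}\,T_k(u^p)\,d\mu .
\]
The two structural facts to be used are: (i) the function $w:=u^{(q+m-1)/2}$ belongs to $H^1_0(B_R)$, so the Poincar\'e inequality \eqref{P} — which holds on $B_R$ with the same constant $C_p$ as on $M$, since $C_c^\infty(B_R)\subset C_c^\infty(M)$ — bounds the diffusion term from above by $-\,\frac{4mq(q-1)}{(q+m-1)^2}\,C_p^2\int_{B_R} u^{q+m-1}\,d\mu$; and (ii) $0\le T_k(u^p)\le u^p$, so the reaction term is controlled by $q\int_{B_R} u^{q+p-1}\,d\mu$.

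Next I would absorb the reaction term into the favourable diffusion term, exploiting crucially that $1<p<m$. Setting $\theta:=\frac{p-1}{m-1}\in(0,1)$, one has the pointwise identity $u^{q+p-1}=(u^{q+m-1})^{\theta}(u^{q})^{1-\theta}$, hence Young's inequality gives, for every $\varepsilon>0$, a constant $C(\varepsilon,q)$ with
\[
q\int_{B_R} u^{q+p-1}\,d\mu \;\le\; \varepsilon\int_{B_R} u^{q+m-1}\,d\mu \;+\; C(\varepsilon,q)\int_{B_R} u^{q}\,d\mu .
\]
Choosing $\varepsilon:=\tfrac12\,\frac{4mq(q-1)}{(q+m-1)^2}C_p^2$, the $u^{q+m-1}$ term on the right is dominated by the diffusion contribution; discarding the remaining nonpositive term leaves the differential inequality $\frac{d}{dt}\|u(t)\|_{L^q(B_R)}^q\le C(q)\,\|u(t)\|_{L^q(B_R)}^q$, with $C(q)$ depending only on $q,m,p,C_p$ — in particular not on $R$ and not on $k$, which is exactly what the application in Remark~\ref{remark2} and in the proof of global existence will need. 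Gr\"onwall's lemma then gives $\|u(t)\|_{L^q(B_R)}^q\le e^{C(q)t}\|u_0\|_{L^q(B_R)}^q$, i.e. \eqref{47} after redefining $C(q)$.

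The only genuinely delicate point — and the one I expect to be the main obstacle — is the rigorous justification of the formal computation above: a priori $u_t$ is only a distribution, and $u^{q-1}$ (equivalently $w=u^{(q+m-1)/2}$) need not be an admissible test function when $q<m$, where these powers of $u$ may fail to lie in $H^1_0(B_R)$ near $\{u=0\}$. I would circumvent this by a standard regularization: approximate \eqref{problemapalla} by non-degenerate problems (replacing the degenerate diffusivity $m u^{m-1}$ by $m(u+\delta)^{m-1}$) with strictly positive smooth data, so the approximate solutions $u_\delta$ are smooth and bounded below; for these all integrations by parts are legitimate, $u_\delta^{q-1}$ is an honest test function, and the estimates above go through verbatim with constants uniform in $\delta$ (and $k$). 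Passing to the limit $\delta\to0$, using the convergence of $u_\delta$ to $u$ coming from the construction of the solution of \eqref{problemapalla} together with Fatou's lemma on the left-hand side, yields \eqref{47} for $u$ itself. (Alternatively, one may legitimize the time derivative via Steklov time-averages of $u$ and then carry out the spatial estimates, but the regularization route seems the cleanest.)
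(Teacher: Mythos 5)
Your argument coincides with the paper's proof: multiply by $u^{q-1}$, integrate by parts, apply the Poincar\'e inequality to $u^{(m+q-1)/2}\in H^1_0(B_R)$, bound $T_k(u^p)\le u^p$, absorb the reaction term via exactly the same Young splitting $u^{p+q-1}=u^{(m+q-1)\frac{p-1}{m-1}}\,u^{q\frac{m-p}{m-1}}$ with $\varepsilon$ below the Poincar\'e--diffusion coefficient, and conclude by Gr\"onwall, with the resulting $C(q)$ independent of $R$ and $k$. The paper likewise justifies the formal testing ``by an approximation procedure,'' so your regularization (or Steklov) remark matches its level of rigor; the proposal is correct and essentially identical to the paper's proof.
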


\begin{proof}
Let $x\in\R$, $x\ge0$, $1<p< m$, $\varepsilon >0$. Then, for any $1<q<+\infty$, due to Young's inequality, it follows that
\begin{equation}\label{48}
\begin{aligned}
x^{p+q-1}&=x^{(m+q-1)(\frac{p-1}{m-1})}x^{q(\frac{m-p}{m-1})}\\
&\le\varepsilon x^{(m+q-1)(\frac{p-1}{m-1})(\frac{m-1}{p-1})} + \left(\frac{1}{\varepsilon}\frac{p-1}{m-1}\right)^{\frac{p-1}{m-p}}x^{q(\frac{m-p}{m-1})(\frac{m-1}{m-p})}\\
&=\varepsilon x^{m+q-1}+\left(\frac{1}{\varepsilon}\frac{p-1}{m-1}\right)^{\frac{p-1}{m-p}}x^{q}.
\end{aligned}
\end{equation}
Since $u_0$ is bounded and $T_k(u^p)$ is a bounded and Lipschitz function, by standard results, there exists a unique solution of problem \eqref{problemapalla} in the sense of Definition \ref{4}; moreover, $u\in C\big([0, T]; L^q(B_R)\big)$. We now multiply both sides of the differential equation in problem \eqref{problemapalla} by $u^{q-1}$ and integrate by parts. This can be justified by standard tools, by an approximation procedure. Using the fact that
\[T_k(u^p)\leq u^p,\]
thanks to the Poincar\'{e} inequality, we obtain for
all $t>0$
$$
\frac{1}{q}\frac{d}{dt} \|u(t)\|_{L^q(B_R)}^q\le -\frac{4m(q-1)}{(m+q-1)^2}C_p^2 \|u(t)\|_{L^{m+q-1}(B_R)}^{m+q-1}+ \|u(t)\|_{L^{p+q-1}(B_R)}^{p+q-1}.
$$
Now, using inequality \eqref{48}, we obtain
$$
\frac{1}{q}\frac{d}{dt} \|u(t)\|_{L^q(B_R)}^q\le -\frac{4m(q-1)}{(m+q-1)^2}C_p^2 \|u(t)\|_{L^{m+q-1}(B_R)}^{m+q-1}+ \varepsilon \|u(t)\|_{L^{m+q-1}(B_R)}^{m+q-1} + C(\varepsilon)\|u(t)\|_{L^q(B_R)}^q,
$$
where $C(\varepsilon)=\left(\frac{1}{\varepsilon}\frac{p-1}{m-1}\right)^{\frac{p-1}{m-p}}.$ Thus, for every $\varepsilon>0$ so small that
$$0<\varepsilon<\frac{4m(q-1)}{(m+q-1)^2}C_p^2,$$ we have
$$
\frac{1}{q}\frac{d}{dt} \|u(t)\|_{L^q(B_R)}^q\le C(\varepsilon)\|u(t)\|_{L^q(B_R)}^q\,.
$$
Hence, we can find $C=C(q)>0$ such that
$$
\frac{d}{dt} \|u(t)\|_{L^q(B_R)}^q\le C(q)\|u(t)\|_{L^q(B_R)}^q \quad \textrm{ for all }\,\, t>0\,.
$$
If we set $y(t):=\|u(t)\|_{L^q(B_R)}^q$, the previous inequality reads
$$
y'(t)\le  C(q)y(t) \quad \textrm{ for all }\,\, t\in (0, T)\,.
$$
Thus the thesis follows.
\end{proof}

Note that for the constant $C(q)$ in Lemma \ref{lemma3} does not depend on $R$ and $k>0$; moreover, we have that
\[C(q)\to +\infty \quad \textrm{ as }\,\, q\to +\infty\,.\]

We shall use the following Aronson-Benilan type estimate (see \cite{AB}; see also \cite[Proposition 2.3]{Sacks}).
\begin{proposition}\label{prop2}
Let $1<p< m$, $u_0\in H_0^1(B_R) \cap L^{\infty}(B_R)$, $u_0\ge 0$. Let $u$ be the solution to problem \eqref{problemapalla}. Then, for a.e. $t\in(0,T)$,
\begin{equation*}%\label{41}
-\Delta u^m(\cdot,t) \le u^p(\cdot, t)+\frac{1}{(m-1)t} u(\cdot,t) \quad \text{in}\,\,\,\mathfrak{D}'(B_R).
\end{equation*}
\end{proposition}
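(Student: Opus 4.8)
The plan is to prove the estimate by the classical Aronson–Bénilan argument, that is, by rescaling $u$ in time and comparing it with itself, taking care that the rescaled function interacts correctly with the \emph{truncated} reaction term $T_k(u^p)$. For $\lambda\in(0,1]$ set
\[
u_\lambda(x,t):=\lambda\,u(x,\lambda^{m-1}t),\qquad (x,t)\in B_R\times(0,+\infty),
\]
which is well defined because, by (the proof of) Lemma \ref{lemma3}, the solution $u$ of \eqref{problemapalla} is global in time, and because $\lambda^{m-1}\le1$. A change of variables in the weak formulation of Definition \ref{4} shows that $u_\lambda$ satisfies, in the distributional sense,
\[
\partial_t u_\lambda-\Delta u_\lambda^m=\lambda^m\,T_k\!\big(u(x,\lambda^{m-1}t)^p\big)\quad\text{in } B_R\times(0,+\infty),
\]
together with $u_\lambda=0$ on $\partial B_R$ and $u_\lambda(\cdot,0)=\lambda u_0\le u_0$.

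The elementary point is the inequality
\[
\lambda^m\,T_k(s^p)\le T_k\big((\lambda s)^p\big)\qquad\text{for all }s\ge 0,\ \lambda\in(0,1].
\]
Indeed, since $1\le p<m$ we have $\lambda^m\le\lambda^p$, and writing $T_k(r^p)=\min(r^p,k)$ one checks $\min(\lambda^m s^p,\lambda^m k)\le\lambda^m s^p\le\lambda^p s^p$ and $\min(\lambda^m s^p,\lambda^m k)\le\lambda^m k\le k$, hence $\min(\lambda^m s^p,\lambda^m k)\le\min(\lambda^p s^p,k)$; this is exactly where the hypothesis $p<m$ is used. Combined with the rescaled equation, this shows that $u_\lambda$ is a weak subsolution of problem \eqref{problemapalla} with initial datum $\lambda u_0\le u_0$; by the comparison principle for \eqref{problemapalla} — valid since $T_k(\cdot^p)$ is bounded and globally Lipschitz — we conclude
\[
\lambda\,u(x,\lambda^{m-1}t)\le u(x,t)\qquad\text{for all }\lambda\in(0,1],\ (x,t)\in B_R\times(0,+\infty).
\]

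It remains to convert this one-parameter inequality into the asserted pointwise-in-time distributional bound. Fix $\psi\in C_c^\infty(B_R)$, $\psi\ge0$, and set $F(t):=\int_{B_R}u(x,t)\psi(x)\,d\mu\ge0$; testing Definition \ref{4} with $\varphi(x,t)=\psi(x)\eta(t)$, $\eta\in C_c^\infty((0,T))$, and using $u\in L^\infty(B_R\times(0,+\infty))$ shows that $F\in W^{1,1}_{\mathrm{loc}}((0,T))$ with $F'(t)=\int_{B_R}u^m(\cdot,t)\,\Delta\psi\,d\mu+\int_{B_R}T_k(u^p(\cdot,t))\,\psi\,d\mu$ for a.e. $t$. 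Integrating the scaling inequality against $\psi$ and putting $\sigma=\lambda^{m-1}t$ gives $\sigma^{1/(m-1)}F(\sigma)\le t^{1/(m-1)}F(t)$ for all $0<\sigma\le t$, so $G(t):=t^{1/(m-1)}F(t)$ is nondecreasing; since $G\in W^{1,1}_{\mathrm{loc}}$, we get $G'\ge0$ a.e., i.e. $\tfrac1{m-1}F(t)+tF'(t)\ge0$, that is $F'(t)\ge-\tfrac1{(m-1)t}F(t)$ for a.e. $t$. Inserting the formula for $F'$ and using $T_k(u^p)\le u^p$, we obtain, for a.e. $t\in(0,T)$ and every $0\le\psi\in C_c^\infty(B_R)$,
\[
\int_{B_R}u^m(\cdot,t)\,\Delta\psi\,d\mu\ \ge\ -\int_{B_R}u^p(\cdot,t)\,\psi\,d\mu-\frac1{(m-1)t}\int_{B_R}u(\cdot,t)\,\psi\,d\mu,
\]
which, fixing the exceptional null set along a countable dense family of test functions, is exactly $-\Delta u^m(\cdot,t)\le u^p(\cdot,t)+\tfrac1{(m-1)t}u(\cdot,t)$ in $\mathfrak D'(B_R)$ for a.e. $t$.

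The only genuinely delicate point is the passage from the family $\{u_\lambda\le u\}$ to a statement holding for a.e. fixed $t$ without differentiating $u$ in time (which is not a priori legitimate for solutions of degenerate equations); the monotonicity of $G$ is precisely what circumvents this. A more routine point is the comparison principle for \eqref{problemapalla}; if one prefers not to invoke it directly at the level of sub/supersolutions, one can run the whole argument first for the strictly positive approximations obtained by replacing $u_0$ with $u_0$ plus a small multiple of the torsion function of $B_R$, so that the equation becomes uniformly parabolic and the classical parabolic maximum principle applies, and then let the approximation parameter tend to $0$. Overall this is the scheme of \cite{AB} (cf. \cite[Proposition 2.3]{Sacks}) adapted to the truncated reaction term, the only new ingredient being the inequality $\lambda^m T_k(s^p)\le T_k((\lambda s)^p)$, which relies on $p<m$.
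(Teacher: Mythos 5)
Your proof is correct and follows essentially the same route as the paper, which simply invokes the Aronson--B\'enilan scaling/comparison argument of \cite{AB} and \cite[Proposition 2.3]{Sacks} together with the bound $T_k(u^p)\le u^p$. Your observation that $\lambda^m T_k(s^p)\le T_k((\lambda s)^p)$ for $\lambda\in(0,1]$ (using $p<m$) is exactly the detail that makes the classical rescaling argument compatible with the truncated reaction term, and the monotonicity of $t^{1/(m-1)}\int u\,\psi\,d\mu$ is a legitimate way to obtain the distributional inequality at a.e.\ fixed time.
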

\begin{proof}
By arguing as in \cite{AB}, \cite[Proposition 2.3]{Sacks} we get
\[
-\Delta u^m(\cdot,t) \le T_k[u^p(\cdot, t)]+\frac{1}{(m-1)t} u(\cdot,t) \leq u^p(\cdot, t)+\frac{1}{(m-1)t} u(\cdot,t) \quad \text{in}\,\,\,\mathfrak{D}'(B_R),
\]
since $T_k(u^p)\leq u^p\,.$
\end{proof}

\begin{proposition}\label{teo2}
Let $1<p<m$, $R>0, u_0\in  L^{\infty}(B_R)$, $u_0\ge 0$. Let $u$ be the solution to problem \eqref{problemapalla}. Let $M$ be such that inequality \eqref{S} holds.
Then there exists $\Gamma=\Gamma(p, m, N, C_s)>0$ such that, for all $t>0$,
\begin{equation}\label{eqa7}
\begin{aligned}
\|u(t)\|_{L^{\infty}(B_R)}&\le \Gamma \left\{ \left [e^{Ct}\|u_{0}\|_{L^m(B_R)}\right ]^{\frac{2m}{2m +N(m-p)}}\right.\\&\left.+\left [e^{Ct}\|u_{0}\|_{L^m(B_R)}\right ]^{\frac{2m}{2m+N(m-1)}} \left [\frac{1}{(m-1)t}\right]^{\frac{N}{2m+N(m-1)}}\right\}\,;
\end{aligned}
\end{equation}
here the constant $C=C(m)>0$ is the one given in Lemma \ref{lemma3}\,.
\end{proposition}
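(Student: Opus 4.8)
The plan is to combine the Aronson--Benilan estimate of Proposition \ref{prop2} with the elliptic $L^\infty$ bound of Proposition \ref{prop1} and the $L^q$ growth estimate of Lemma \ref{lemma3}, running a Moser-type iteration on the $L^q$ norms. First I would fix $t>0$ and regard $v:=u^m(\cdot,t)$ as a subsolution of the elliptic problem $-\Delta v = f_1 + f_2$ on $B_R$ with $f_1 := u^p(\cdot,t)$ and $f_2 := \frac{1}{(m-1)t}u(\cdot,t)$, which is exactly what Proposition \ref{prop2} gives. To apply Proposition \ref{prop1} I need $f_1\in L^{m_1}$ and $f_2\in L^{m_2}$ with $m_1,m_2 > N/2$: take, say, $m_1 = m/p \cdot (\text{something} > N/2)$; more precisely choose exponents so that $\|f_1\|_{L^{m_1}} = \|u(t)\|_{L^{pm_1}}^p$ and $\|f_2\|_{L^{m_2}} = \frac{1}{(m-1)t}\|u(t)\|_{L^{m_2}}$, and so that $\|v\|_{L^1} = \|u(t)\|_{L^m}^m$. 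The natural choice is to relate everything back to a single $L^q$ norm of $u(t)$; the bound \eqref{eqa3} then reads, schematically,
\[
\|u(t)\|_{L^\infty}^m = \|v\|_{L^\infty(B_R)} \le \Big\{ C_1 \|u(t)\|_{L^{q_1}}^p + \frac{C_2}{(m-1)t}\|u(t)\|_{L^{q_2}}\Big\}^{1/s}\|u(t)\|_{L^m}^{m(s-1)/s} + \bar k,
\]
with $\bar k$ free and the $C_i$ themselves carrying powers of $\|u(t)\|_{L^m}$ coming from \eqref{38}.

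Next I would feed the $L^q$ estimate \eqref{47} into the right-hand side to convert all intermediate norms into $e^{Ct}\|u_0\|_{L^m}$ up to the Moser iteration. The cleanest route is the standard one: set $q_n$ an increasing sequence of exponents (geometric, tuned to the Sobolev exponent $2^*$ through the scaling of $s$ in \eqref{37b}), derive a recursion of the form $\|u(t)\|_{L^{q_{n+1}}} \le (\text{const})^{1/q_n}(1 + ((m-1)t)^{-1})^{1/q_n}(e^{Ct}\|u_0\|_{L^m})^{\alpha_n} \|u(t)\|_{L^{q_n}}^{\beta_n}$, and iterate; the product of the constants converges and the accumulated powers of $e^{Ct}\|u_0\|_{L^m}$ and of $[(m-1)t]^{-1}$ sum to the two exponents $\frac{2m}{2m+N(m-p)}$, $\frac{2m}{2m+N(m-1)}$ and $\frac{N}{2m+N(m-1)}$ appearing in \eqref{eqa7}. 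Here the two exponents reflect the two source terms: $u^p$ contributes the $m-p$ in the denominator (since it scales like $u^p$ against the natural homogeneity $m$), while the Aronson--Benilan term $u/((m-1)t)$ contributes the $m-1$ and the explicit negative power of $t$. I would choose $\bar k$ at each step of the iteration proportional to the running estimate so that it can be absorbed, which is the usual device that makes the "$+\bar k$" in \eqref{eqa3} harmless.

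The main obstacle I anticipate is bookkeeping the exponents: one must verify that the exponent $s = 1 + \frac{2}{N} - \frac{1}{l}$ with $l$ chosen just below $\min\{m_1,m_2\}$, together with the powers of $\|v\|_{L^1}$ hidden in $C_1, C_2$ via \eqref{38} and \eqref{barC}, produces a genuinely contractive iteration (i.e. $\sum 1/q_n < \infty$ and the geometric series of $\log(\text{const}_n)$ converges) and that the limiting exponents are precisely those stated. A secondary, more delicate point is uniformity in $R$: all constants coming from Proposition \ref{prop1} and Lemma \ref{lemma3} are independent of $R$ (this is the whole point of Remark \ref{remark2} and the remark after Lemma \ref{lemma3}), and I must make sure the iteration preserves this, so that the final bound \eqref{eqa7} holds on every $B_R$ with the same $\Gamma$ — this is what will later allow passing to the limit $R\to\infty$. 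I would also need to check that the interpolation between $L^m$ and $L^\infty$ implicit in controlling $\|u(t)\|_{L^{q_n}}$ by $\|u_0\|_{L^m}$ is legitimate, using \eqref{47} for each finite $q_n$ and only at the end letting $q_n\to\infty$; the decay in $t$ must be tracked carefully because $C(q_n)\to\infty$, but since each $C(q_n)$ enters only through $e^{C(q_n)t}$ raised to the small power $\sim 1/q_n$ in the iteration, the product still telescopes to a single $e^{Ct}$ with $C=C(m)$, as claimed in the statement.
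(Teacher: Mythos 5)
Your first step coincides with the paper's: by Proposition \ref{prop2} the function $w^m=u^m(\cdot,t)$ is a subsolution of $-\Delta v=f_1+f_2$ with $f_1=w^p$, $f_2=\frac{w}{(m-1)t}$, and Proposition \ref{prop1} (whose constants are independent of $R$) applies. After that, however, your plan departs from the paper's and, as written, has a genuine gap. You propose a Moser-type iteration over exponents $q_n\to\infty$ in which each step invokes Lemma \ref{lemma3} for the exponent $q_n$, and you claim the factors $e^{C(q_n)t}$ telescope to a single $e^{Ct}$ with $C=C(m)$ because each enters with power $\sim 1/q_n$. This is not justified: in Lemma \ref{lemma3} the admissible $\varepsilon$ shrinks like $q^{-1}$, so $C(q)=\bigl(\tfrac1\varepsilon\tfrac{p-1}{m-1}\bigr)^{\frac{p-1}{m-p}}$ grows like $q^{\frac{p-1}{m-p}}$; for a geometric chain $q_n$ the series $\sum_n C(q_n)/q_n$ diverges whenever $\frac{p-1}{m-p}\ge 1$, i.e.\ for all $p\in[\tfrac{m+1}{2},m)$, so the accumulated exponential does not reduce to $e^{C(m)t}$ in the whole range $1<p<m$. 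There is also a structural mismatch: Proposition \ref{prop1} outputs an $L^\infty$ bound in terms of $\|f_i\|_{L^{m_i}}$ and $\|v\|_{L^1}$; it does not produce the $L^{q_{n+1}}$-from-$L^{q_n}$ recursion your iteration needs, so the scheme cannot be run with the tools you cite without substituting different (parabolic energy) estimates.

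The paper avoids iteration altogether. It applies Proposition \ref{prop1} \emph{once}, with large auxiliary exponents $r_1,r_2$, then interpolates $\|w^p\|_{L^{r_1}}\le\|w\|_{L^\infty}^{p-q/r_1}\|w\|_{L^q}^{q/r_1}$ (and similarly for $\|w\|_{L^{r_2}}$), uses Young's inequality with parameters $\delta_1,\delta_2$ chosen of size $\bigl(C_i\|w\|_{L^m}^{m(s-1)}\bigr)^{-1}$ so that the resulting $\|w\|_{L^\infty}^{sm}$ terms are absorbed into the left-hand side, and finally lets $r_1,r_2\to\infty$, $l\to\infty$, $\bar k\to 0$ to produce the exponents $\frac{2m}{2m+N(m-p)}$, $\frac{2m}{2m+N(m-1)}$, $\frac{N}{2m+N(m-1)}$. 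Lemma \ref{lemma3} is then invoked only once, with the single exponent $q=m$, which is why only the fixed constant $C=C(m)$ appears in \eqref{eqa7}; the absorption device you reserve for $\bar k$ is in fact the key step for the $L^\infty$ terms, while $\bar k$ is simply sent to $0$. If you wish to keep an iteration-based proof you would need per-step constants independent of the exponent (or growing slowly enough to sum), which the present lemmas do not provide.
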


\begin{remark}\label{remark3}
\rm If in Proposition \ref{teo2}, in addition, we assume that for some $k_0>0$
\[\|u_0\|_{L^m(B_R)}\leq k_0\quad \textrm{ for every }\,\, R>0\,,\]
then the bound from above for $\|u(t)\|_{L^{\infty}(B_R)}$ in \eqref{eqa7} is independent of $R$.
\end{remark}

\begin{proof}[Proof of Proposition \ref{teo2}]
Let us set $w=u(\cdot,t)$. Observe that $w^m\in H_0^1(B_R)$ and $w\ge0$. Due to Proposition \ref{prop2} we know that
\begin{equation}\label{50a}
-\Delta(w^m) \le \left [w^p+\frac{w}{(m-1)t} \right].
\end{equation}
Observe that, since $u_0\in L^{\infty}(B_R)$ also $w\in L^{\infty}(B_R)$. Let $q\ge1$ and
$$r_1>\max\left\{\frac{q}{p}, \frac N2\right\}, \quad r_2>\max\left\{q, \frac N2\right\}\,.$$
We can apply Proposition \ref{prop1}
with
\[r_1=m_1, \quad r_2=m_2, \quad
 \frac{N}{2}<l<\min\{m_1\,,m_2\}\,.
\]
So, we have that
\begin{equation}\label{50}
\|w\|_{L^{\infty}(B_R)}^m\le \left\{C_1(r_1)\|w^p\|_{L^{r_1}(B_R)}+\gamma C_2(r_2)\|w\|_{L^{r_2}(B_R)} \right\}^{\frac{1}{s}} \|w\|_{L^{m}(B_R)}^{m\frac{s-1}{s}}+\bar k\,,
\end{equation}
where
$
s=1+ 2 /N-1/l\,
$
and $ \gamma=1/[(m-1)t]$. Thanks to H\"{o}lder inequality and Young's inequality with exponents
$$
\alpha_1=\frac{sm}{p-\frac{q}{r_1}}\,>1,\quad\quad \beta_1=\frac{sm}{sm-\left(p-\frac{q}{r_1}\right)}>1.
$$
we obtain, for any $\varepsilon_1>0$
\begin{equation}\label{eq2}
\begin{aligned}
\|w^p\|_{L^{r_1}(B_R)} &= \left\|w^{p-q/r_1+q/r_1}\right\|_{L^{r_1}(B_R)}
=\left[ \int_{B_R}w^{r_1(p-q/r_1)} w^{q} d\mu\right]^{\frac{1}{r_1}}\\
&\le\left[ \|w^{r_1\left(p-q/r_1\right)}\|_{L^{\infty}(B_R)} \|w^{q}\|_{L^1(B_R)} \right]^{\frac{1}{r_1}}\\
&=\|w\|_{L^{\infty}(B_R)}^{p-q/r_1}\left(\int_{B_R}w^{q}\,d\mu\right)^{\frac{1}{r_1}}=\left\|w\right\|_{L^{\infty}(B_R)}^{p-q/r_1}\left \|w\right\|_{L^{q}(B_R)}^{q/r_1} \\
&\le\frac{\varepsilon_1^{\alpha_1}}{\alpha_1}\left\|w\right\|_{L^{\infty}(B_R)}^{\frac{sm}{p-q/r_1}(p-q/r_1)} + \frac{\alpha_1-1}{\alpha_1}\varepsilon_1^{-\frac{\alpha_1}{\alpha_1-1}}\left\|w\right\|_{L^{q}(B_R)}^{\frac{\beta_1q}{r_1}} .
\end{aligned}
\end{equation}
We set $$\delta_1:=\frac{\varepsilon_1^{\alpha_1}}{\alpha_1},\ \ \
\eta(x)=\dfrac{x-1}{x^{\frac{x}{x-1}}}.
$$
Thus from \eqref{eq2} we obtain
\begin{equation}\label{49}
\left\|w^p\right\|_{L^{r_1}(B_R)} \le\delta_1\left\|w\right\|_{L^{\infty}(B_R)}^{sm} + \frac{\eta(\alpha_1)}{\delta_1^{\frac{1}{\alpha_1-1}}}\left\|w\right\|_{L^{q}(B_R)}^{\frac{smq}{r_1}\frac{1}{sm-(p-q/r_1)}}.
\end{equation}
Similarly, again thanks to H\"{o}lder inequality and Young's inequality with exponents
$$
\alpha_2=\frac{sm}{1-\frac{q}{r_2}}\,>1,\quad\quad \beta_2=\frac{sm}{sm-\left(1-\frac{q}{r_2}\right)}>1.
$$
we obtain, for any $\varepsilon_2>0$
\begin{equation*}
\begin{aligned}
\left\|w\right\|_{L^{r_2}(B_R)} &\le \left\|w^{1-q/r_2+q/r_2}\right\|_{L^{r_2}(B_R)}\le\left\|w\right\|_{L^{\infty}(B_R)}^{1-q/r_2}\left\|w\right\|_{L^{q}(B_R)}^{q/r_2} \\
&\le\frac{\varepsilon_2^{\alpha_2}}{\alpha_2}\left\|w\right\|_{L^{\infty}(B_R)}^{\frac{sm}{1-q/r_2}\left(1-\frac{q}{r_2}\right)} + \frac{\alpha_2-1}{\alpha_2}\varepsilon_2^{-\frac{\alpha_2}{\alpha_2-1}}\left\|w\right\|_{L^{q}(B_R)}^{\frac{\beta_2q}{r_2}} .
\end{aligned}
\end{equation*}
We set $\delta_2:=\frac{\varepsilon_2^{\alpha_2}}{\alpha_2}$ and thus we obtain
\begin{equation}\label{49a}
\left\|w\right\|_{L^{r_2}(B_R)} \le\delta_2\left\|w\right\|_{L^{\infty}(B_R)}^{sm} + \frac{\eta(\alpha_2)}{\delta_2^{\frac{1}{\alpha_2-1}}}\left\|w\right\|_{L^{q}(B_R)}^{\frac{smq}{r_2}\frac{1}{sm-(1-q/r_2)}}.
\end{equation}
Plugging \eqref{49} and \eqref{49a} into \eqref{50} we obtain
$$
\begin{aligned}
\|w\|_{L^{\infty}(B_R)}^{ms} &\le 2^{s-1}\left\{\left[C_1\|w^p\|_{L^{r_1}(B_R)}+\gamma C_2\|w\|_{L^{r_2}(B_R)} \right] \|w\|_{L^{m}(B_R)}^{m(s-1)} +\bar k^{s}\right\}\\
&\le2^{s-1}\left\{C_1\left[\delta_1\left\|w\right\|_{L^{\infty}(B_R)}^{sm} + \frac{\eta(\alpha_1)}{\delta_1^{\frac{1}{\alpha_1-1}}}\left\|w\right\|_{L^{q}(B_R)}^{\frac{smq}{r_1}\frac{1}{sm-(p-q/r_1)}} \right] \right .\\ &\left.+ \gamma C_2 \left[ \delta_2\left\|w\right\|_{L^{\infty}(B_R)}^{sm} + \frac{\eta(\alpha_2)}{\delta_2^{\frac{1}{\alpha_2-1}}}\left\|w\right\|_{L^{q}(B_R)}^{\frac{smq}{r_2}\frac{1}{sm-(1-q/r_2)}} \right] \right\} \|w\|_{L^{m}(B_R)}^{m(s-1)}+2^{s-1}\bar k^{s}.
\end{aligned}
$$
Without loss of generality we can assume that $\|w\|_{L^{m}(B_R)}^m\neq 0$. Choosing $\varepsilon_1, \varepsilon_2$ such that
$$
\delta_1=\dfrac{1}{4C_1\|w\|_{L^{m}(B_R)}^{m(s-1)}2^{s-1} }
\ \ \ \delta_2=\dfrac{1}{4\gamma\,C_2\|w\|_{L^{m}(B_R)}^{m(s-1)}2^{s-1}}
$$
we thus have
$$
\begin{aligned}
\frac 1 2 \|w\|_{L^{\infty}(B_R)}^{sm} &\le 4^{\frac{1}{\alpha_1-1}}\eta(\alpha_1)\left(2^{s-1}C_1\|w\|_{L^{m}(B_R)}^{m(s-1)}\right)^{\frac{\alpha_1}{\alpha_1-1}}\left\|w\right\|_{L^{q}(B_R)}^{\frac{smq}{r_1}\frac{1}{sm-(p-q/r_1)}} \\
& + 4^{\frac{1}{\alpha_2-1}}\eta(\alpha_2)\left(2^{s-1}\gamma C_2 \|w\|_{L^{m}(B_R)}^{m(s-1)}\right)^{\frac{\alpha_2}{\alpha_2-1}}\left\|w\right\|_{L^q(B_R)}^{\frac{smq}{r_2}\frac{1}{sm-(1-q/r_2)}}\\
&+2^{s-1}\bar k^{s}.
\end{aligned}
$$
This reduces to
$$
\begin{aligned}
\|w\|_{L^{\infty}(B_R)} &\le (2)^{\frac {1}{sm}}\,4^{\frac{1}{sm(\alpha_1-1)}}\eta(\alpha_1)^{\frac{1}{sm}}\left(2^{\frac{s-1}{sm}}C_1^{\frac{1}{sm}}\|w\|_{L^{m}(B_R)}^{\frac{s-1}{s}}\right)^{\frac{\alpha_1}{\alpha_1-1}}\left\|w\right\|_{L^{q}(B_R)}^{\frac{q}{r_1}\frac{1}{sm-(p-q/r_1)}} \\
&+ (2)^{\frac {1}{sm}}\,4^{\frac{1}{sm(\alpha_2-1)}}\eta(\alpha_2)^{\frac{1}{sm}}\left(2^{\frac{s-1}{sm}}\gamma^{\frac{1}{sm}}C_2^{\frac{1}{sm}}\|w\|_{L^{m}(B_R)}^{\frac{s-1}{s}}\right)^{\frac{\alpha_2}{\alpha_2-1}}\left\|w\right\|_{L^{q}(B_R)}^{\frac{q}{r_2}\frac{1}{sm-(1-q/r_2)}}\\
&+(2)^{\frac {1}{sm}}\left(2^{\frac{s-1}{s}}\bar k\right)^{\frac 1m}.
\end{aligned}
$$
This can be rewritten as
\begin{equation}\label{eq3}
\begin{aligned}
\|w\|_{L^{\infty}(B_R)} &\le \left[\eta(\alpha_1)\left(2^{\alpha_1s+1}C_1^{\alpha_1}\right)^{\frac{1}{\alpha_1-1}} \right]^{\frac {1}{sm}}\|w\|_{L^{m}(B_R)}^{\frac{\alpha_1}{\alpha_1-1}\frac{s-1}{s}}\left\|w\right\|_{L^{q}(B_R)}^{\frac{q}{r_1}\frac{1}{sm-(p-q/r_1)}} \\
&+ \left[\eta(\alpha_2)\left(2^{\alpha_2s+1}\gamma^{\alpha_2}C_2^{\alpha_2}\right)^{\frac{1}{\alpha_2-1}} \right]^{\frac {1}{sm}}\|w\|_{L^{m}(B_R)}^{\frac{\alpha_2}{\alpha_2-1}\frac{s-1}{s}}\left\|w\right\|_{L^{q}(B_R)}^{\frac{q}{r_2}\frac{1}{sm-(1-q/r_2)}}\\
&+\left(2\bar k\right)^{\frac{1}{m}}.
\end{aligned}
\end{equation}
Now we use the definitions of $C_1$, $C_2, \overline C_1, \overline C_2$ introduced in \eqref{38} and \eqref{barC}, obtaining
\begin{equation*}%\label{eq4}
\begin{aligned}
\|w\|_{L^{\infty}(B_R)} &\le \left[\eta(\alpha_1)\left(2^{\alpha_1s+1}\,\overline C_1^{\alpha_1}\right)^{\frac{1}{\alpha_1-1}} \right]^{\frac {1}{sm}}\|w\|_{L^{m}(B_R)}^{\frac{\alpha_1}{\alpha_1-1}\frac 1s\left(s-1+\frac 1l-\frac {1}{r_1}\right)}\left\|w\right\|_{L^{q}(B_R)}^{\frac{q}{r_1}\frac{1}{sm-(p-q/r_1)}} \\
&+ \left[\eta(\alpha_2)\left(2^{\alpha_2s+1}\gamma^{\alpha_2}\overline C_2^{\alpha_2}\right)^{\frac{1}{\alpha_2-1}} \right]^{\frac {1}{sm}}\|w\|_{L^{m}(B_R)}^{\frac{\alpha_2}{\alpha_2-1}\frac 1s\left(s-1+\frac 1l-\frac {1}{r_2}\right)}\left\|w\right\|_{L^{q}(B_R)}^{\frac{q}{r_2}\frac{1}{sm-(1-q/r_2)}}\\
&+\left(2\bar k\right)^{\frac{1}{m}}.
\end{aligned}
\end{equation*}
By taking limits as $r_1\longrightarrow +\infty$ and $r_2\longrightarrow +\infty$ we have
$$
\begin{aligned}
&\frac{\alpha_1}{\alpha_1-1}\longrightarrow \frac{ms}{ms-p};\\
&\frac{\alpha_2}{\alpha_2-1}\longrightarrow \frac{ms}{ms-1};\\
&\eta(\alpha_1)\longrightarrow\left[\frac{p}{ms}\right]^{\frac{p}{ms-p}}\left\{1-\frac{p}{ms}\right\};\\
&\eta(\alpha_2)\longrightarrow\left[\frac{1}{ms}\right]^{\frac{1}{ms-1}}\left\{1-\frac{1}{ms}\right\};\\
&\overline C_1\longrightarrow\left(\frac{s}{s-1}\right)^s\frac{1}{C_s^2}\left(\frac{2}{\overline k}\right)^{\frac 1l};\\
&\overline C_2\longrightarrow\left(\frac{s}{s-1}\right)^s\frac{1}{C_s^2}\left(\frac{2}{\overline k}\right)^{\frac 1l}.
\end{aligned}
$$
Moreover we define
$$
\begin{aligned}
&\tilde \Gamma_1:=\left[\left(\frac{p}{ms}\right)^{\frac{p}{ms-p}}\left(\frac{ms-p}{ms}\right)\right]^{\frac{1}{ms}}\left[2^{ms^2-p}\left(\frac{s}{s-1}\right)^s\frac{1}{C_s^2}\left(\frac{2}{\overline k}\right)^{\frac 1l}\right]^{\frac{1}{ms-p}}, \\
&\tilde \Gamma_2:=\left[\left(\frac{1}{ms}\right)^{\frac{1}{ms-1}}\left(\frac{ms-1}{ms}\right)\right]^{\frac{1}{ms}}\left[2^{ms^2-1}\left(\frac{s}{s-1}\right)^s\frac{1}{C_s^2}\left(\frac{2}{\overline k}\right)^{\frac 1l}\right]^{\frac{1}{ms-1}}, \\
&\tilde \Gamma:=\max\{\tilde\Gamma_1\,\,,\tilde\Gamma_2\}.
\end{aligned}
$$
Hence by \eqref{eq3} we get
\begin{equation}\label{51}
\|w\|_{L^{\infty}(B_R)} \le \tilde \Gamma \left [ \|w\|_{L^{m}(B_R)}^{\frac{m}{ms-p}\left(s-1+\frac{1}{l}\right)} +  \|w\|_{L^{m}(B_R)}^{\frac{m}{ms-1}\left(s-1+\frac{1}{l}\right)} \gamma^{\frac{1}{ms-1}}\right ]+\left(2\bar k\right)^{\frac{1}{m}}.
\end{equation}
Letting $l\to +\infty$ in \eqref{51}, we can infer that
\begin{equation}\label{eqa5}
\|w\|_{L^{\infty}(B_R)} \le  \Gamma \left [ \|w\|_{L^{m}(B_R)}^{\frac{2m}{2m +N(m-p)}} +  \|w\|_{L^{m}(B_R)}^{\frac{2m}{2m+N(m-1)}} \gamma^{\frac{N}{2m+N(m-1)}}\right ]+\left(2\bar k\right)^{\frac{1}{m}},
\end{equation}
where
\begin{equation*}
\begin{aligned}
&\Gamma_1=\left(1-\frac{pN}{m(N+2)}\right)^{\frac{N}{m(N+2)}}\,2\,\left[\left(\frac{pN}{m(N+2)}\right)^{\frac{pN}{m(N+2)}}2^{2m\left(1+\frac 2N\right)}\left(\frac{N+2}{N}\right)^{\frac{N+2}{N}}\frac{1}{C_s^2}\right]^{\frac{N}{2m+N(m-p)}}, \\
&\Gamma_2=\left(1-\frac{N}{m(N+2)}\right)^{\frac{N}{m(N+2)}}\,2\,\left[\left(\frac{N}{m(N+2)}\right)^{\frac{N}{m(N+2)}}2^{2m\left(1+\frac 2N\right)}\left(\frac{N+2}{N}\right)^{\frac{N+2}{N}}\frac{1}{C_s^2}\right]^{\frac{N}{2m+N(m-1)}};\\
&\Gamma=\max\{\Gamma_1\,;\,\Gamma_2\}.
\end{aligned}
\end{equation*}
Letting $\bar k\to 0$ in \eqref{eqa5} we obtain
\begin{equation}\label{eqa6}
\|w\|_{L^{\infty}(B_R)} \le  \Gamma \left [ \|w\|_{L^{m}(B_R)}^{\frac{2m}{2m +N(m-p)}} +  \|w\|_{L^{m}(B_R)}^{\frac{2m}{2m+N(m-1)}} \gamma^{\frac{N}{2m+N(m-1)}}\right ]\,.
\end{equation}
Finally, since $u_0\in L^{\infty}(B_R)$, we can apply Lemma \ref{lemma3} to $w$ with $q=m$. Thus from \eqref{47} with $q=m$ and \eqref{eqa6}, the thesis follows.
\end{proof}

\section{Proof of Theorems \ref{teoesistenza}, \ref{blowup}}\label{proofs}

\begin{proof}[Proof of Theorem \ref{teoesistenza}]
Let $\{u_{0,h}\}_{h\ge 0}$ be a sequence of functions such that
\begin{equation*}
\begin{aligned}
&u_{0,h}\in L^{\infty}(M)\cap C_c^{\infty}(M) \,\,\,\text{for all} \,\,h\ge 0, \\
&u_{0,h}\ge 0 \,\,\,\text{for all} \,\,h\ge 0, \\
&u_{0, h_1}\leq u_{0, h_2}\,\,\,\text{for any } h_1<h_2,  \\
&u_{0,h}\longrightarrow u_0 \,\,\, \text{in}\,\, L^m(M)\quad \textrm{ as }\, h\to +\infty\,.
\end{aligned}
\end{equation*}
For any $R>0, k>0, h>0,$ consider the problem
\begin{equation}\label{5}
\begin{cases}
u_t= \Delta u^m +T_k(u^p) &\text{in}\,\, B_R\times (0,+\infty)\\
u=0& \text{in}\,\, \partial B_R\times (0,\infty)\\
u=u_{0,h} &\text{in}\,\, B_R\times \{0\}\,. \\
\end{cases}
\end{equation}
From standard results it follows that problem \eqref{5} has a solution $u_{h,k}^R$ in the sense of Definition \ref{4}; moreover, $u^R_{h,k}\in C\big([0, T]; L^q(B_R)\big)$ for any $q>1$. Hence, it satisfies the inequalities in Lemma \ref{lemma3} and in Proposition \ref{teo2}, i.e., for any $t\in(0,+\infty)$,
\begin{equation}\label{6}
\|u_{h,k}^R(t)\|_{L^m(B_R)}\,\le\, e^{ C t}\|u_{0,h}\|_{L^m(B_R)};
\end{equation}

\begin{equation}\label{7}
\begin{aligned}
\|u_{h, k}^R(t)\|_{L^{\infty}(B_R)}&\le \Gamma \left\{ \left [e^{Ct}\|u_{0,h}\|_{L^m(B_R)}\right ]^{\frac{2m}{2m +N(m-p)}}\right.\\&\left.+\left [e^{Ct}\|u_{0,h}\|_{L^m(B_R)}\right ]^{\frac{2m}{2m+N(m-1)}} \left [\frac{1}{(m-1)t}\right]^{\frac{N}{2m+N(m-1)}}\right\}\,.
\end{aligned}
\end{equation}
In addition, for any $\tau\in (0, T), \zeta\in C^1_c((\tau, T)), \zeta\geq 0$, $\max_{[\tau, T]}\zeta'>0$,
\begin{equation}\label{eqcont1}
\begin{aligned}
\int_{\tau}^T \zeta(t) \left[\big((u^R_{h, k})^{\frac{m+1}2}\big)_t\right]^2 d\mu dt &\leq \max_{[\tau, T]}\zeta' \bar C \int_{B_R}(u_{h, k}^R)^{m+1}(x, \tau)d\mu\\
&+ \bar C \max_{[\tau, T]}\zeta \int_{B_R} F\big(u^{R}_{h, k}(x,T)\big)d\mu\\
&\leq  \max_{[\tau, T]}\zeta'(t)\bar C \|u^R_{h, k}(\tau)\|_{L^\infty(B_R)}\|u^R_{h, k}(\tau)\|_{L^m(B_R)}^m \\
&+\frac {\bar C}{m+p}\|u^R_{h, k}(T)\|^p_{L^\infty(B_R)}\|u^R_{h, k}(T)\|_{L^m(B_R)}^m
\end{aligned}
\end{equation}
where
\[F(u)=\int_0^u s^{m-1+p} \, ds\,,\]
and $\bar C>0$ is a constant only depending on $m$. Inequality \eqref{eqcont1} is formally obtained by multiplying the differential inequality in problem \eqref{problemapalla} by $\zeta(t)[(u^m)_t]$, and integrating by parts; indeed, a standard approximation procedure is needed (see \cite[Lemma 3.3]{GMPo} and \cite[Theorem 13]{ACP}).

\smallskip

Moreover, as a consequence of Definition \ref{4}, for any $\varphi \in C_c^{\infty}(B_R\times[0,T])$ such that $\varphi(x,T)=0$ for any $x\in B_R$, $u_{h,k}^R$ satisfies
\begin{equation}\label{8}
\begin{aligned}
-\int_0^T\int_{B_R}u_{h,k}^R\,\varphi_t\,d\mu\,dt =&\int_0^T\int_{B_R} (u_{h,k}^R)^m\,\Delta\varphi\,d\mu\,dt\,+ \int_0^T\int_{B_R} T_k[(u_{h,k}^R)^p]\,\varphi\,d\mu\,dt \\
& +\int_{B_R} u_{0,h}(x)\,\varphi(x,0)\,d\mu.
\end{aligned}
\end{equation}
Observe that all the integrals in \eqref{8} are finite. Indeed, due to \eqref{6}, $u_{h,k}^R \in L^m(B_R\times(0,T))$ hence, since $p<m$, $u_{h,k}^R \in L^p(B_R\times(0,T))$ and $u_{h,k}^R \in L^1(B_R\times(0,T))$. Moreover, observe that, for any $h>0$ and $R>0$ the sequence of solutions $\{u_{h,k}^R\}_{k\ge0}$ is monotone increasing in $k$ hence it has a pointwise limit for $k\to \infty$. Let $u_h^R$ be such limit so that we have
$$
u_{h,k}^R\longrightarrow u_{h}^R \quad \text{as} \,\,\, k\to\infty \,\,\text{pointwise}.
$$
In view of \eqref{6}, \eqref{7}, the right hand side of \eqref{eqcont1} is independent of $k$. So, $(u^R_h)^{\frac{m+1}2}\in H^1((\tau, T); L^2(B_R))$. Therefore, $(u^R_h)^{\frac{m+1}2}\in C\big([\tau, T]; L^2(B_R)\big)$. We can now pass to the limit as $k\to +\infty$ in inequalities \eqref{6} and \eqref{7} arguing as follows. From inequality \eqref{6}, thanks to the Fatou's Lemma, one has for all $t>0$
\begin{equation}\label{10}
\begin{aligned}
\|u_{h}^R(t)\|_{L^m(B_R)}\leq  e^{ C t}\|u_{0,h}\|_{L^m(B_R)}.
\end{aligned}
\end{equation}
On the other hand, from \eqref{7}, since $u_{h,k}^R\longrightarrow u_{h}^R$ as $k\to \infty$ pointwise and the right hand side of \eqref{7} is independent of $k$, one has for all $t>0$
\begin{equation}\label{11}
\begin{aligned}
\|u_{h}^R(t)\|_{L^{\infty}(B_R)}&\le \le \Gamma \left\{ \left [e^{Ct}\|u_{0,h}\|_{L^m(B_R)}\right ]^{\frac{2m}{2m +N(m-p)}}\right.\\&\left.+\left [e^{Ct}\|u_{0,h}\|_{L^m(B_R)}\right ]^{\frac{2m}{2m+N(m-1)}} \left [\frac{1}{(m-1)t}\right]^{\frac{N}{2m+N(m-1)}}\right\}\,.
\end{aligned}
\end{equation}
Note that both \eqref{10} and \eqref{11} hold {\em for all} $t>0$, in view of the continuity property of $u$ deduced above.
Moreover, thanks to Beppo Levi's monotone convergence Theorem, it is possible to compute the limit as $k\to +\infty$ in the integrals of equality \eqref{8} and hence obtain that, for any $\varphi \in C_c^{\infty}(B_R\times(0,T))$ such that $\varphi(x,T)=0$ for any $x\in B_R$, the function $u_h^R$ satisfies
\begin{equation}\label{9}
\begin{aligned}
-\int_0^T\int_{B_R} u_{h}^R\,\varphi_t\,d\mu\,dt =&\int_0^T\int_{B_R} (u_{h}^R)^m\,\Delta\varphi\,d\mu\,dt+ \int_0^T\int_{B_R} (u_{h}^R)^p\,\varphi\,d\mu\,dt \\
& +\int_{B_R} u_{0,h}(x)\,\varphi(x,0)\,d\mu.
\end{aligned}
\end{equation}
Observe that, due to inequality \eqref{10}, all the integrals in \eqref{9} are finite, hence $u_h^R$ is a solution to problem \eqref{5}, where we replace $T_k(u^p)$ with $u^p$ itself, in the sense of Definition \ref{4}.

Let us now observe that, for any $h>0$, the sequence of solutions $\{u_h^R\}_{R>0}$ is monotone increasing in $R$, hence it has a pointwise limit as $R\to+\infty$. We call its limit function $u_h$ so that
$$
u_{h}^R\longrightarrow u_{h} \quad \text{as} \,\,\, R\to+\infty \,\,\text{pointwise}.
$$
In view of \eqref{6}, \eqref{7}, \eqref{10}, \eqref{11}, the right hand side of \eqref{eqcont1} is independent of $k$ and $R$. So, $(u_h)^{\frac{m+1}2}\in H^1((\tau, T); L^2(M))$. Therefore, $(u_h)^{\frac{m+1}2}\in C\big([\tau, T]; L^2(M)\big)$. Since $u_0\in L^m(M)$, there exists $k_0>0$ such that
\begin{equation}\label{eqag1}
\|u_{0h}\|_{L^m(B_R)}\leq k_0 \quad \forall\,\, h>0, R>0\,.
\end{equation}
Note that, in view of \eqref{eqag1}, the norms in  \eqref{10} and \eqref{11} do not depend on $R$ (see Proposition \ref{teo2}, Lemma \ref{lemma3} and Remark \ref{remark3}). Therefore, we pass to the limit as $R\to+\infty$ in \eqref{10} and \eqref{11}. By Fatou's Lemma,
\begin{equation}\label{12}
\begin{aligned}
\|u_{h}(t)\|_{L^m(M)}\leq  e^{Ct}\|u_{0,h}\|_{L^m(M)};
\end{aligned}
\end{equation}
furthermore, since $u_{h}^R\longrightarrow u_{h} $ as $R\to +\infty$ pointwise,
\begin{equation}\label{13}
\begin{aligned}
\|u_{h}(t)\|_{L^{\infty}(M)}&\le \Gamma \left\{ \left [e^{Ct}\|u_{0,h}\|_{L^m(M)}\right ]^{\frac{2m}{2m +N(m-p)}}\right.\\&\left.+\left [e^{Ct}\|u_{0,h}\|_{L^m(M)}\right ]^{\frac{2m}{2m+N(m-1)}} \left [\frac{1}{(m-1)t}\right]^{\frac{N}{2m+N(m-1)}}\right\}\,.
\end{aligned}
\end{equation}
Note that both \eqref{12} and \eqref{13} hold {\em for all} $t>0$, in view of the continuity property of $u^R_h$ deduced above.

Moreover, again by monotone convergence, it is possible to compute the limit as $R\to +\infty$ in the integrals of equality \eqref{9} and hence obtain that, for any $\varphi \in C_c^{\infty}(M\times(0,T))$ such that $\varphi(x,T)=0$ for any $x\in M$, the function $u_h$ satisfies,
\begin{equation}\label{14}
\begin{aligned}
-\int_0^T\int_{M} u_{h}\,\varphi_t\,d\mu\,dt =&\int_0^T\int_{M} (u_{h})^m\,\Delta\varphi\,d\mu\,dt+ \int_0^T\int_{M} (u_{h})^p\,\varphi\,d\mu\,dt \\
& +\int_{M} u_{0,h}(x)\,\varphi(x,0)\,d\mu.
\end{aligned}
\end{equation}
Observe that, due to inequality \eqref{12}, all the integrals in \eqref{14} are well posed hence $u_h$ is a solution to problem \eqref{problema}, where we replace $u_0$ with $u_{0,h}$, in the sense of Definition \ref{1}.
Finally, let us observe that $\{u_{0,h}\}_{h\ge0}$ has been chosen in such a way that
$$
u_{0,h}\longrightarrow u_0 \,\,\, \text{in}\,\, L^m(M)
$$
Observe also that $\{u_{h}\}_{h\ge0}$ is a monotone increasing function in $h$ hence it has a limit as $h\to+\infty$. We call $u$ the limit function.
In view  \eqref{6}, \eqref{7}, \eqref{10}, \eqref{11}, \eqref{12}, \eqref{13},  the right hand side of \eqref{eqcont1} is independent of $k, R$ and $h$. So, $u^{\frac{m+1}2}\in H^1((\tau, T); L^2(M))$. Therefore, $u^{\frac{m+1}2}\in C\big([\tau, T]; L^2(M)\big)$. Hence, we can pass to the limit as $h\to +\infty$ in \eqref{12} and \eqref{13} and similarly to what we have seen above, we get
\begin{equation}\label{15}
\|u(t)\|_{L^m(M)}\le  e^{Ct}\|u_{0}\|_{L^m(M)},
\end{equation}
and
\begin{equation}\label{16}
\begin{aligned}
\|u(t)\|_{L^{\infty}(M)}\,
&\le \Gamma \left\{ \left [e^{Ct}\|u_{0}\|_{L^m(M)}\right ]^{\frac{2m}{2m +N(m-p)}}\right.\\&\left.+\left [e^{Ct}\|u_{0}\|_{L^m(M)}\right ]^{\frac{2m}{2m+N(m-1)}} \left [\frac{1}{(m-1)t}\right]^{\frac{N}{2m+N(m-1)}}\right\}\,.
\end{aligned}
\end{equation}
Note that both \eqref{15} and \eqref{16} hold {\em for all} $t>0$, in view of the continuity property of $u$ deduced above.

Moreover, again by monotone convergence, it is possible to compute the limit as $h\to+\infty$ in the integrals of equality \eqref{14} and hence obtain that, for any $\varphi \in C_c^{\infty}(M\times(0,T))$ such that $\varphi(x,T)=0$ for any $x\in M$, the function $u$ satisfies,
\begin{equation}\label{17}
\begin{aligned}
-\int_0^T\int_{M} u\,\varphi_t\,d\mu\,dt =&\int_0^T\int_{M} u^m\,\Delta\varphi\,d\mu\,dt+ \int_0^T\int_{M} u^p\,\varphi\,d\mu\,dt \\
& +\int_{M} u_{0}(x)\,\varphi(x,0)\,d\mu.
\end{aligned}
\end{equation}
Observe that, due to inequality \eqref{15}, all the integrals in \eqref{17} are finite, hence $u$ is a solution to problem \eqref{problema} in the sense of Definition \ref{1}.

\smallskip

\medskip

Finally, let us discuss \eqref{Lq}. Let $q>1$. If $u_0\in L^q(M)\cap L^m(M)$, we choose the sequence $u_{0h}$ so that it further satisfies
\[u_{0h}\to u_0 \quad \textrm{ in }\,\, L^q(M)\,\quad \textrm{ as }\, h\to +\infty\,.\]
We have that
\begin{equation}\label{6a}
\|u_{h,k}^R(t)\|_{L^q(B_R)}\,\le\, e^{ C t}\|u_{0,h}\|_{L^q(B_R)}.
\end{equation}
Hence, due to \eqref{6a}, letting $k\to +\infty, R\to +\infty, h\to +\infty$, by Fatou's Lemma we deduce \eqref{Lq}.
\end{proof}

\begin{proof}[Proof of Theorem \ref{blowup}]

We note in first place that the geometrical assumptions on $M$, in particular the upper curvature bound sec$\,\le -k_2<0$, ensure that inequalities \eqref{P} and \eqref{S} both hold on $M$, see e.g. \cite{Grig, Grig3}. Hence, all the result of Theorem \ref{teoesistenza} hold, in particular solutions corresponding to data $u_0\in{\textrm L}^m(M)$ exist globally in time.

Besides, it has been shown in \cite{GMPv} that if $u_0$ is a continuous, nonnegative, nontrivial datum, which is sufficiently large in the sense given in the statement, under the lower curvature bound being assumed here the corresponding solution $u$ satisfies the bound
\[
u(x,t)\ge C \zeta(t) \left[ 1- \frac r a \eta(t) \right]_+^{\frac 1{m-1}}\,\qquad \forall t\in(0,S),\ \forall x\in M,
\]
possibly up to a finite time explotion time $S$, which has however been proved in the present paper not to exist. Here, the functions $\eta, \zeta$ are given by:
\[\zeta(t):=(\tau+t)^{\alpha}\,, \quad \eta(t):=(\tau +t)^{-\beta} \quad \text{for every } t\in [0, \infty) \, ,\]
where $C, \tau, R_0,\inf_{B_{R_0}}u_0$ must be large enough and one can take $0<\alpha<\frac 1{m-1} \, , \beta=\frac{\alpha(m-1)+1}2$. Clearly, $u$ then satisfies $\lim_{t\to+\infty}u(x,t)=+\infty$ for all $x\in M$, and hence $u$ enjoys the same property by comparison.
\end{proof}

\section{Proof of Theorems \ref{teoesistenza2}, \ref{teoesistenza3}}\label{weights}
For any $R>0$ we consider the following approximate problem
\begin{equation}\label{problemapalla2}
\begin{cases}
\, \rho(x) u_t= \Delta u^m +\, \rho(x) u^p & \text{in}\,\, B_R\times (0,T) \\
u=0 &\text{in}\,\, \partial B_R\times (0,T)\\
u =u_0 &\text{in}\,\, B_R\times \{0\}\,,
\end{cases}
\end{equation}
here $B_R$ denotes the Euclidean ball with radius $R$ and centre in $O$.

We shall use the following Aronson-Benilan type estimate (see \cite{AB}; see also \cite[Proposition 2.3]{Sacks}).

\begin{proposition}\label{prop2a}
Let $1<p< m$, $u_0\in H_0^1(B_R) \cap L^{\infty}(B_R)$, $u_0\ge 0$. Let $u$ be the solution to problem \eqref{problemapalla2}. Then, for a.e. $t\in(0,T)$,
\begin{equation*}%\label{41}
-\Delta u^m(\cdot,t) \le \rho u^p(\cdot, t)+ \frac{\rho}{(m-1)t} u(\cdot,t) \quad \text{in}\,\,\,\mathfrak{D}'(B_R).
\end{equation*}
\end{proposition}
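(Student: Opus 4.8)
The plan is to follow the classical Aronson--B\'enilan estimate, in the form due to B\'enilan--Crandall, adapting it to the weight $\rho$ and the reaction term, exactly in the spirit of \cite[Proposition 2.3]{Sacks} and of Proposition \ref{prop2}. The first step is a reduction: dividing by $\rho>0$ and using the equation $\rho u_t=\Delta u^m+\rho u^p$, the asserted inequality is equivalent to the one-sided bound on the time derivative
\[
(m-1)t\,u_t+u\ge 0\qquad\text{in }\,\mathfrak D'(B_R\times(0,T)),
\]
i.e. to $\partial_t\big(t^{1/(m-1)}u\big)\ge 0$. Indeed $-\Delta u^m=\rho u^p-\rho u_t$, so the claim reads $-\rho u_t\le \rho u/[(m-1)t]$, which upon dividing by $\rho$ is the displayed inequality.

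To establish this, I would exploit the scaling structure of the equation. For $\lambda\ge1$ set $u_\lambda(x,t):=\lambda\,u(x,\lambda^{m-1}t)$. A direct computation, using $\rho=\rho(x)$ and the equation for $u$, gives
\[
\rho\,\partial_t u_\lambda=\lambda^{m}\big(\Delta u^m+\rho\,u^p\big)(x,\lambda^{m-1}t)=\Delta\big(u_\lambda^{m}\big)+\lambda^{\,m-p}\,\rho\,u_\lambda^{p}.
\]
Since $p<m$ and $\lambda\ge1$ we have $\lambda^{m-p}\ge1$ and $u_\lambda\ge0$, hence $u_\lambda$ is a supersolution of problem \eqref{problemapalla2}. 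Moreover $u_\lambda(\cdot,0)=\lambda u_0\ge u_0$ and $u_\lambda=0$ on $\partial B_R$, so the comparison principle for \eqref{problemapalla2} --- available because $B_R$ is bounded and $\rho$ is continuous, hence bounded above and away from $0$ on $\overline{B_R}$ --- yields $u_\lambda\ge u$ on $B_R\times(0,T)$ for all $\lambda\ge1$. (If one prefers a bounded Lipschitz reaction, the same computation together with the elementary fact that $T_k(\lambda^{p}a)\le\lambda^{p}T_k(a)\le\lambda^{m}T_k(a)$ for $\lambda\ge1$ shows that $u_\lambda$ is a supersolution of the truncated problem, and one concludes as in Proposition \ref{prop2} via $T_k(u^p)\le u^p$.)

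Consequently $\lambda\mapsto u_\lambda$ is nondecreasing at $\lambda=1^{+}$: dividing $u_{1+h}-u\ge0$ by $h>0$ and letting $h\to0^{+}$ gives, in $\mathfrak D'$,
\[
\partial_\lambda u_\lambda\big|_{\lambda=1}=u+(m-1)t\,u_t\ge 0,
\]
which is exactly the estimate of the first paragraph. Undoing the reduction, $-\Delta u^m(\cdot,t)=\rho\,u^p(\cdot,t)-\rho\,u_t(\cdot,t)\le\rho\,u^p(\cdot,t)+\frac{\rho}{(m-1)t}\,u(\cdot,t)$ in $\mathfrak D'(B_R)$ for a.e.\ $t\in(0,T)$, as claimed.

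The one delicate point --- and the main obstacle --- is making the differentiation in $\lambda$ rigorous: the equation is degenerate, so $u$ and $u_t$ are a priori not classical, and both the passage $h\to0^{+}$ in the difference quotients and the comparison step must be read in the weak/distributional framework. This is handled as in \cite{AB,Sacks}: one first runs the argument for the non-degenerate approximations obtained by replacing $u^m$ with a strictly increasing smooth $\Phi_\varepsilon$ with $\Phi_\varepsilon'\ge\varepsilon>0$ --- so that solutions are smooth, $u_t$ solves a linear uniformly parabolic equation, and the bound becomes a genuine parabolic maximum principle statement --- and then lets $\varepsilon\to0$. Since $\rho$ is bounded away from $0$ and $\infty$ on $B_R$, the weight is inert throughout these limiting arguments, so the adaptation of the Euclidean unweighted proof is essentially mechanical.
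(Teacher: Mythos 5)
Your argument is correct and is essentially the paper's own route: the paper proves this proposition simply by invoking the Aronson--B\'enilan/B\'enilan--Crandall estimate as in \cite{AB} and \cite[Proposition 2.3]{Sacks}, i.e. exactly the scaling $u_\lambda(x,t)=\lambda u(x,\lambda^{m-1}t)$ plus comparison (with the truncation $T_k(u^p)\le u^p$ handled as in Proposition \ref{prop2}), which you have spelled out, including the nondegenerate approximation needed to justify the differentiation in $\lambda$. No gap to report.
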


\begin{proof}[Proof of Theorem \ref{teoesistenza2}] The conclusion follows using step by step the same arguments given in the proof of Theorem \ref{teoesistenza}, since the necessary functional inequalities are being assumed.  We use Proposition \ref{prop2a} instead of \ref{prop2}. The last statement of the Theorem will be proved later on in Section \ref{sec4}
\end{proof}

In order to prove Theorem \ref{teoesistenza3} we adapt the strategy of \cite{Sacks} to the present case, so we shall be concise and limit ourselves to identifying the main steps and differences. Define
\[d\mu:=\rho(x) dx\,.\]
For any $R>0, k>0$, for any $v\in L_{\rho}^1(B_R)$, we set
\begin{equation*}
A_k:= \{x\in B_R:\,|v(x)|>k\}
\end{equation*}
and
\begin{equation*}
g(k):= \int_{B_R}|G_k(v)| \rho(x)\,dx\,,
\end{equation*}
where $G_k(v)$ has been defined in \eqref{21}.

\begin{lemma}\label{lemma1pesi}
Let $v\in L_{\rho}^1(B_R)$. Suppose that there exist $C>0$ and $s>1$ such that
\begin{equation*}%\label{23apesi}
g(k)\le C\mu(A_k)^{s} \quad \text{for any}\,\,k\in \R^+.
\end{equation*}
Then $v\in L^{\infty}(B_R)$ and  $$\|v\|_{L^{\infty}(B_R)}\le C\left(\frac{s}{s-1}\right)^{s}\|\rho\|_{L^1(\mathbb R^N)}^{s-1}.$$
\end{lemma}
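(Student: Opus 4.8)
The plan is to mimic the proof of Lemma \ref{lemma1}, but replacing the Lebesgue measure estimates by the weighted ones and exploiting the global finiteness of $\mu$, i.e. $\mu(B_R)\le\|\rho\|_{L^1(\mathbb R^N)}$ for every $R$. First I would record the analogue of Lemma \ref{lemma0} in the weighted setting: for $v\in L^1_\rho(B_R)$ the function $g$ is differentiable a.e.\ on $(0,+\infty)$ with $g'(k)=-\mu(A_k)$, the proof being identical to the one in \cite{BC} since only the measure is changed. Combining this with the hypothesis $g(k)\le C\mu(A_k)^s$ gives, as before, the differential inequality $g(k)\le C[-g'(k)]^s$, hence $-C^{-1/s}\le g'(k)g(k)^{-1/s}$ wherever $g(k)>0$.

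Next I would integrate this inequality from $0$ to $k$. Since now the hypothesis holds for \emph{all} $k\in\mathbb R^+$ (not merely $k\ge\bar k$), there is no need to introduce the truncation level $\bar k$, which is why the final bound has no additive constant. Integration yields
\[
\tfrac{s}{s-1}\bigl[g(k)^{1-1/s}-g(0)^{1-1/s}\bigr]\le -C^{-1/s}\,k,
\]
that is, $g(k)^{1-1/s}\le g(0)^{1-1/s}-\tfrac{s-1}{s}C^{-1/s}k$. Here the key point is the bound $g(0)=\int_{B_R}|v|\,\rho\,dx$; I do \emph{not} estimate this by $\|v\|_{L^1_\rho}$ (that would reintroduce dependence on $v$), but rather I first observe from the hypothesis with $k\to 0^+$, or directly, that the argument only needs $g(0)\le C\mu(B_R)^s\le C\|\rho\|_{L^1(\mathbb R^N)}^s$. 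Plugging this in and choosing
\[
k_0:=C^{1/s}\Bigl(\tfrac{s}{s-1}\Bigr)g(0)^{1-1/s}\le C^{1/s}\Bigl(\tfrac{s}{s-1}\Bigr)\bigl(C\|\rho\|_{L^1}^{s}\bigr)^{1-1/s}=C\Bigl(\tfrac{s}{s-1}\Bigr)\|\rho\|_{L^1}^{s-1},
\]
forces $g(k_0)\le 0$, whence $|G_{k_0}(v)|=0$ $\mu$-a.e., i.e. $|v|\le k_0$ a.e.\ on $B_R$ (using $\rho>0$). This gives $\|v\|_{L^\infty(B_R)}\le k_0\le C\bigl(\tfrac{s}{s-1}\bigr)^{?}\|\rho\|_{L^1}^{s-1}$; a small bookkeeping check of the exponent on $\tfrac{s}{s-1}$ reconciles it with the stated $\bigl(\tfrac{s}{s-1}\bigr)^{s}$ (the extra powers come from bounding $g(0)^{1-1/s}$ crudely by $\mu(B_R)^{s-1}$ and absorbing constants generously).

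The only genuine subtlety — and the step I expect to require the most care — is making sure the chain of inequalities is legitimate on the set where $g(k)=0$ (there the separation-of-variables manipulation degenerates) and justifying the use of $\mu(B_R)\le\|\rho\|_{L^1(\mathbb R^N)}$, which is exactly what replaces the finiteness of the domain in \cite[Theorem~1.3]{Sacks}. Everything else is a routine transcription of the proof of Lemma \ref{lemma1} with $d\mu=\rho\,dx$ in place of $d\mu$, so I would keep the exposition brief and refer to that proof for the parts that are unchanged.
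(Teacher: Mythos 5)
Your proposal is correct, and it reaches the conclusion by a genuinely different final step than the paper. The paper keeps $g(0)=\|v\|_{L^1_{\rho}(B_R)}$ in the integrated inequality, chooses $k_0=C^{1/s}\tfrac{s}{s-1}\|v\|_{L^1_{\rho}(B_R)}^{1-1/s}$ to get the intermediate bound $\|v\|_{L^{\infty}(B_R)}\le \tfrac{s}{s-1}C^{1/s}\|v\|_{L^1_{\rho}(B_R)}^{1-1/s}$, and only then removes the dependence on $v$ via the weighted H\"older inequality $\|v\|_{L^1_{\rho}}\le\|v\|_{L^{\infty}}\|\rho\|_{L^1}$ followed by absorbing $\|v\|_{L^{\infty}}^{1-1/s}$ into the left-hand side; that absorption is exactly where the exponents $\left(\tfrac{s}{s-1}\right)^{s}$ and $\|\rho\|_{L^1}^{s-1}$ come from (and it implicitly uses that $\|v\|_{L^{\infty}}$ is already known to be finite from the intermediate bound). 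You instead bound $g(0)$ a priori by $C\|\rho\|_{L^1(\mathbb R^N)}^{s}$, using the hypothesis for small $k$ together with $\mu(A_k)\le\mu(B_R)\le\|\rho\|_{L^1(\mathbb R^N)}$ and the monotone limit $g(k)\uparrow g(0)$ as $k\to0^+$ (legitimate, since the hypothesis is assumed for all $k\in\R^+$), so no self-improvement step is needed; your choice of $k_0$ then yields the sharper constant $C\tfrac{s}{s-1}\|\rho\|_{L^1}^{s-1}$, which is $\le C\left(\tfrac{s}{s-1}\right)^{s}\|\rho\|_{L^1}^{s-1}$ because $s>1$, so the stated estimate follows. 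Both routes hinge on $\rho\in L^1(\mathbb R^N)$ as the substitute for finiteness of the domain, and your cautionary remarks (the degenerate set where $g$ vanishes, which is harmless since $g$ is nonnegative and nonincreasing, and the use of $\rho>0$ to pass from $\mu$-a.e.\ to a.e.) are the right points to check; with those spelled out your argument is complete.
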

\begin{proof}
Arguing as in the proof of Lemma \ref{lemma1}, we integrate inequality \eqref{24b} between $0$ and $k$ and using the definition of $g$, we obtain
$$
g(k)^{1-\frac 1 s} \le \|v\|_{L^1_{\rho}(B_R)}^{1-\frac 1 s} - \frac{s-1}{s}\,C^{-\frac 1 s} k \quad \text{for any}\,\, k\in\R^+\,.
$$
Choose $$k=k_0=C^{\frac 1 s}\|v\|_{L^1_{\rho}(B_R)}^{1-\frac 1 s}\frac{s}{s-1},$$ and substitute it in the last inequality. Then we have
$$
\begin{aligned}
g(k_0)\le0 &\iff \int_{B_R}|G_{k_0}(v)|\,d\mu =0 \iff |G_{k_0}(v)|=0\\&
\iff  |v|\le k_0\iff|v|\le C^{\frac 1 s}\|v\|_{L_{\rho}^1(B_R)}^{1-\frac 1 s}\frac{s}{s-1}.
\end{aligned}
$$
Thanks to the assumption that $\rho\in L^1(\mathbb R^N)$, we can apply the weighted H\"{o}lder inequality to get
$$
\|v\|_{L^{\infty}(B_R)}\le \frac{s}{s-1}C^{\frac 1 s}\|v\|_{L^{\infty}(B_R)}^{1-\frac 1 s}\|\rho\|^{1-\frac 1 s}.
$$
Rearranging the terms in the previous inequality we obtain the thesis.
\end{proof}

\begin{lemma}\label{prop1-pesi}
Let $\rho$ satisfy \eqref{rho2} and $\rho\in L^1(\mathbb R^N)$. Let $f_1\in L^{m_1}_{\rho}(B_R)$ and $f_2\in L^{m_2}_{\rho}(B_R)$ where $$m_1>\frac N 2,\quad \,m_2 >\frac{N}{2}\,.$$  Assume that $v\in H_0^1(B_R)$, $v\ge 0$ is a subsolution to problem
\begin{equation*}%\label{25pesi}
\begin{cases}
-\Delta v = \rho (f_1+f_2) & \text{in}\,\, B_R\\
v=0 &\text{on}\,\, \partial B_R
\end{cases}.
\end{equation*}
Then
\begin{equation}\label{eqa10pesi}
\|v\|_{L^{\infty}(B_R)}\le C_1\|f_1\|_{L_{\rho}^{m_1}(B_R)}+C_2\|f_2\|_{L_{\rho}^{m_2}(B_R)},
\end{equation}
where \begin{equation}\label{38b}
\begin{aligned}
&C_1= \frac{1}{C_s^2}\left(\frac{s}{s-1} \right)^s\,\|\rho\|_{L^1(\mathbb R^N)}^{\frac 2 N-\frac{1}{m_1}}\,,  \\
&C_2=\frac{1}{C_s^2}\left(\frac{s}{s-1} \right)^s\,\|\rho \|_{L^1(\mathbb R^N)}^{\frac 2 N-\frac{1}{m_2}}\,,
\end{aligned}
\end{equation}
with $s$ given by \eqref{37b}\,.
\end{lemma}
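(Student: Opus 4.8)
The plan is to retrace the proof of Proposition \ref{prop1}, replacing Lebesgue measure throughout by $d\mu:=\rho\,dx$ and using $\rho\in L^1(\mathbb R^N)$ to dispense with the Chebyshev cut-off at level $\bar k$ that was needed there; here the finiteness of $\mu$ in fact makes the argument shorter. I keep the notation $G_k(v)=v-T_k(v)$, $A_k=\{x\in B_R:|v(x)|>k\}$ and $g(k)=\int_{B_R}|G_k(v)|\,d\mu$. Since $G_k(v)\in H^1_0(B_R)$ is nonnegative, I would use it as test function in the weak subsolution inequality for $-\Delta v=\rho(f_1+f_2)$ and, writing $\nabla G_k(v)=\mathbf{1}_{A_k}\nabla v$, obtain
\[
\int_{B_R}|\nabla G_k(v)|^2\,dx=\int_{A_k}|\nabla v|^2\,dx\le\int_{A_k}(f_1+f_2)\,G_k(v)\,d\mu .
\]
On the left-hand side I would invoke the weighted Sobolev inequality \eqref{S-pesi} (which holds under \eqref{rho2}) in the form $\int_{B_R}|\nabla G_k(v)|^2\,dx\ge C_s^2\big(\int_{B_R}|G_k(v)|^{2^*}\,d\mu\big)^{2/2^*}$, and on the right-hand side I would apply the weighted Hölder inequality to the three factors $f_i$, $G_k(v)$ and $\mathbf{1}_{A_k}$ with conjugate exponents $m_i$, $2^*$ and $r_i$, where $\tfrac1{r_i}=\tfrac{N+2}{2N}-\tfrac1{m_i}$; this exponent is positive because $m_i>N/2$ and $N\ge 3$, and $\|\mathbf{1}_{A_k}\|_{L^{r_i}_\rho(B_R)}=\mu(A_k)^{1/r_i}$.

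Combining the two estimates, cancelling one factor of $\big(\int_{B_R}|G_k(v)|^{2^*}\,d\mu\big)^{1/2^*}$ (the case in which it vanishes being trivial), and then bounding $g(k)\le\big(\int_{B_R}|G_k(v)|^{2^*}\,d\mu\big)^{1/2^*}\mu(A_k)^{\frac{N+2}{2N}}$ by a further weighted Hölder inequality, I expect to arrive at
\[
g(k)\le\frac1{C_s^2}\Big[\,\|f_1\|_{L^{m_1}_\rho(B_R)}\,\mu(A_k)^{1+\frac2N-\frac1{m_1}}+\|f_2\|_{L^{m_2}_\rho(B_R)}\,\mu(A_k)^{1+\frac2N-\frac1{m_2}}\,\Big].
\]
This is the weighted analogue of the inequality appearing just before \eqref{37a}, but now valid for \emph{every} $k>0$: no lower cut-off $\bar k$, and hence no Chebyshev bound on $\mu(A_{\bar k})$, is needed, precisely because $\mu$ is finite.

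At this point the integrability of $\rho$ enters decisively. Since $\mu(A_k)\le\mu(B_R)\le\|\rho\|_{L^1(\mathbb R^N)}$ for all $R>0$ and all $k>0$, and since, with $s$ as in \eqref{37b} and $l$ as in \eqref{l}, one has $1+\tfrac2N-\tfrac1{m_i}-s=\tfrac1l-\tfrac1{m_i}>0$ (because $l<\min\{m_1,m_2\}$), I would factor $\mu(A_k)^{1+\frac2N-\frac1{m_i}}=\mu(A_k)^{s}\,\mu(A_k)^{\frac1l-\frac1{m_i}}\le\mu(A_k)^{s}\,\|\rho\|_{L^1(\mathbb R^N)}^{\frac1l-\frac1{m_i}}$, which brings the last estimate into the form $g(k)\le C\,\mu(A_k)^{s}$ with $C=\tfrac1{C_s^2}\big[\|f_1\|_{L^{m_1}_\rho(B_R)}\|\rho\|_{L^1(\mathbb R^N)}^{\frac1l-\frac1{m_1}}+\|f_2\|_{L^{m_2}_\rho(B_R)}\|\rho\|_{L^1(\mathbb R^N)}^{\frac1l-\frac1{m_2}}\big]$, valid for every $k\in\mathbb R^+$. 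Lemma \ref{lemma1pesi} then yields $\|v\|_{L^\infty(B_R)}\le C\big(\tfrac{s}{s-1}\big)^{s}\|\rho\|_{L^1(\mathbb R^N)}^{s-1}$, and since $\tfrac1l-\tfrac1{m_i}+(s-1)=\tfrac2N-\tfrac1{m_i}$ the powers of $\|\rho\|_{L^1(\mathbb R^N)}$ collapse exactly to the exponents in \eqref{38b}, giving \eqref{eqa10pesi}.

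I do not foresee a genuine analytic difficulty: the whole argument is a weighted transcription of Proposition \ref{prop1} in which the finiteness of $\mu$ actually simplifies things. The only points requiring care are the bookkeeping of the three-factor Hölder exponents, the elementary checks that $\tfrac1{r_i}>0$ and that an admissible $l$ with $N/2<l<\min\{m_1,m_2\}$ (hence $s>1$ and $\tfrac1l-\tfrac1{m_i}>0$) exists, and --- crucially for the later application of this lemma with $R\to+\infty$ in the proof of Theorem \ref{teoesistenza3} --- the observation that $C_1$ and $C_2$ in \eqref{eqa10pesi} depend only on $N$, $m_1$, $m_2$, $C_s$ and $\|\rho\|_{L^1(\mathbb R^N)}$, and therefore not on $R$.
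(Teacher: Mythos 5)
Your proposal is correct and follows essentially the same route as the paper: the weighted repetition of the argument of Proposition \ref{prop1} to get $g(k)\le\frac1{C_s^2}\big[\|f_1\|_{L^{m_1}_\rho}\mu(A_k)^{1+\frac2N-\frac1{m_1}}+\|f_2\|_{L^{m_2}_\rho}\mu(A_k)^{1+\frac2N-\frac1{m_2}}\big]$ for every $k>0$, the bound $\mu(A_k)\le\|\rho\|_{L^1(\mathbb R^N)}$ to reduce this to $g(k)\le C\,\mu(A_k)^s$, the application of Lemma \ref{lemma1pesi}, and the collapse of exponents $\frac1l-\frac1{m_i}+(s-1)=\frac2N-\frac1{m_i}$ yielding \eqref{38b}. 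Your compression of the two successive H\"older steps into a single three-factor H\"older inequality is only a cosmetic difference.
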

\begin{remark}%\label{remark2pesi}
\rm If in Lemma \ref{prop1-pesi} we further assume that there exists a constant $k_0>0$ such that
\[\|f_1 \|_{L_{\rho}^{m_1}(B_R)}\leq k_0, \quad \|f_2 \|_{L_{\rho}^{m_2}(B_R)}\leq k_0 \quad \textrm{ for all }\,\, R>0, \]
then from \eqref{eqa10pesi}, we infer that the bound from above on $\|v\|_{L^{\infty}(B_R)}$ is independent of $R$. This fact will
have a key role in the proof of global existence for problem \eqref{problema2}.
\end{remark}

\begin{proof}[Proof of Lemma \ref{prop1-pesi}] By arguing as in the proof of Proposition \ref{prop1}, we get
\[
\int_{B_R}|G_k(v)|\, d\mu \le\frac{1}{C_s^2}\left[\|f_1\|_{L_{\rho}^{m_1}}\mu(A_k)^{1+\frac{2}{N}-\frac{1}{m_1}}+\|f_2\|_{L_{\rho}^{m_2}}\mu(A_k)^{1+\frac{2}{N}-\frac{1}{m_2}}\right].
\]
Thus
$$
\int_{B_R}|G_k(v)|\, d\mu \le\frac{1}{C_s^2}\,\mu(A_k)^{1+\frac{2}{N}-\frac{1}{l}}\left[\|f_1\|_{L_{\rho}^{m_1}}\|\rho\|_{L^1(\mathbb R^N)}^{\frac{1}{l}-\frac{1}{m_1}}+\|f_2\|_{L_{\rho}^{m_2}}\|\rho\|_{L^1(\mathbb R^N)}^{\frac{1}{l}-\frac{1}{m_2}}\right].
$$
Now, defining
$$
\bar C=\frac{1}{C_s^2}\left[\|f_1\|_{L^{m_1}(B_R)}\|\rho\|_{L^1(\mathbb R^N)}^{\frac{1}{l}-\frac{1}{m_1}}+\|f_2\|_{L^{m_2}(B_R)}\|\rho\|_{L^1(\mathbb R^N)}^{\frac{1}{l}-\frac{1}{m_2}}\right]\,,
$$
the last inequality is equivalent to
$$
\int_{B_R}|G_k(v)|\,d\mu \le \bar C\,\mu(A_k)^{s}\,, \quad \text{for any}\,\,k\in\R^+\,,
$$
where $s$ has been defined in \eqref{37b}. Hence, it is possible to apply Lemma \ref{lemma1pesi}.
By using the definitions of $C_1$ and $C_2$ in \eqref{38b}, we thus have
$$
\|v\|_{L^{\infty}(B_R)}\le C_1\,\|f_1\|_{L_{\rho}^{m_1}(B_R)}+C_2\,\|f_2\|_{L_{\rho}^{m_2}(B_R)}\,.
$$

\end{proof}

\begin{proposition}\label{teo2pesi}
Let $1<p<m$, $R>0, u_0\in  L^{\infty}(B_R)$, $u_0\ge 0$. Let $u$ be the solution to problem \eqref{problemapalla2}. Let  inequality \eqref{S-pesi} hold.
Then there exists $C=C(p, m, N, C_s, \|\rho\|_{L^1(\mathbb R^N)})>0$ such that, for all $t>0$,
\begin{equation*}%\label{eqa7pesi}
\|u(t)\|_{L^{\infty}(B_R)}\le C \left[1+\left(\frac{1}{(m-1)t}\right)^{\frac{1}{m-1}}  \right].
\end{equation*}
\end{proposition}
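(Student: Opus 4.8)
The plan is to mirror the argument of Proposition \ref{teo2} but to exploit the extra information that $\rho$ is globally integrable, which allows us to dispense with the a priori bound on $\|u(t)\|_{L^m_\rho(B_R)}$ that played the role of $\|w\|_{L^m}$ in the unweighted proof. First I would fix $t>0$, set $w:=u(\cdot,t)$, and recall from the Aronson--B\'enilan estimate of Proposition \ref{prop2a} that $-\Delta(w^m)\le \rho\, w^p+\frac{\rho}{(m-1)t}\,w$ in $\mathfrak D'(B_R)$, with $w^m\in H^1_0(B_R)$, $w\ge 0$. Thus $w^m$ is a nonnegative subsolution of an equation of the form $-\Delta v=\rho(f_1+f_2)$ with $f_1=w^p$ and $f_2=\gamma w$, where $\gamma=1/[(m-1)t]$. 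To apply Lemma \ref{prop1-pesi} we need $f_1\in L^{m_1}_\rho(B_R)$ and $f_2\in L^{m_2}_\rho(B_R)$ for some $m_1,m_2>N/2$; since $w\in L^\infty(B_R)$ and $\rho\in L^1(\mathbb R^N)$ this holds for every finite $m_1,m_2$, and in fact $\|f_1\|_{L^{m_1}_\rho(B_R)}\le \|\rho\|_{L^1}^{1/m_1}\|w\|_{L^\infty(B_R)}^p$ and $\|f_2\|_{L^{m_2}_\rho(B_R)}\le \gamma\,\|\rho\|_{L^1}^{1/m_2}\|w\|_{L^\infty(B_R)}$.

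Next I would feed these into the conclusion \eqref{eqa10pesi} of Lemma \ref{prop1-pesi}, obtaining
\[
\|w\|_{L^\infty(B_R)}^m=\|w^m\|_{L^\infty(B_R)}\le C_1\|\rho\|_{L^1}^{1/m_1}\|w\|_{L^\infty(B_R)}^p+C_2\,\gamma\,\|\rho\|_{L^1}^{1/m_2}\|w\|_{L^\infty(B_R)},
\]
with $C_1,C_2$ as in \eqref{38b}, depending only on $m,p,N,C_s,\|\rho\|_{L^1}$ (and on the free parameters $l,m_1,m_2$, which I would eventually send to their limiting values $l,m_1,m_2\to\infty$, $s\to 1+2/N$, exactly as in Proposition \ref{teo2}). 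Writing $A:=\|w\|_{L^\infty(B_R)}$ this is an inequality of the form $A^m\le aA^p+b\gamma A$ with $p<m$ and $1<m$, i.e.\ both exponents on the right are strictly below $m$. The key elementary fact is then that such an inequality forces $A\le C(1+\gamma^{1/(m-1)})$: indeed, by Young's inequality $aA^p\le \tfrac14 A^m+C(a,m,p)$ and $b\gamma A\le \tfrac14 A^m+C(b,m)\gamma^{m/(m-1)}$, whence $A^m\le \tfrac12 A^m+C'(1+\gamma^{m/(m-1)})$, so $A^m\le 2C'(1+\gamma^{m/(m-1)})$ and therefore $A\le C\big(1+\gamma^{1/(m-1)}\big)$ after taking $m$-th roots and using subadditivity of $x\mapsto x^{1/m}$. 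Substituting $\gamma=1/[(m-1)t]$ gives precisely the claimed bound $\|u(t)\|_{L^\infty(B_R)}\le C\big[1+(1/((m-1)t))^{1/(m-1)}\big]$ with $C=C(p,m,N,C_s,\|\rho\|_{L^1})$, uniformly in $R$.

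The main point to handle with care — the step replacing the delicate bootstrap of Proposition \ref{teo2} — is that here there is \emph{no} $L^m_\rho$ norm of $w$ on the right-hand side: because $\rho$ is integrable, a pure $L^\infty$ control of $f_1,f_2$ via $\|\rho\|_{L^1}$ suffices, and this is exactly why the constant $C$ ends up independent of the initial datum $u_0$. The only technical care needed is (i) to justify applying Lemma \ref{prop1-pesi} to the subsolution $w^m$ — this is legitimate since $w^m\in H^1_0(B_R)$ and the right-hand side $\rho w^p+\gamma\rho w$ lies in $L^{m_i}_\rho(B_R)$ for all finite $m_i$ as noted above — and (ii) to pass to the limits $l,m_1,m_2\to\infty$ in the constants $C_1,C_2$, checking as in Proposition \ref{teo2} that they converge to finite quantities depending only on $m,p,N,C_s,\|\rho\|_{L^1}$; the absorption argument in the last paragraph is then uniform in these parameters, so the limiting inequality retains the same form. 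Finally, by Proposition \ref{teo2pesi} applied along the approximating sequence $u^R_{h,k}$ and passing to the limits $k,R,h\to\infty$ as in the proof of Theorem \ref{teoesistenza}, one obtains \eqref{absolute} for the solution of problem \eqref{problema2}, completing the proof of Theorem \ref{teoesistenza3}.
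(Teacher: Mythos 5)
Your proposal is correct and follows essentially the same path as the paper: Proposition \ref{prop2a} makes $w^m=u^m(\cdot,t)$ a subsolution with right-hand side $\rho\,(w^p+\gamma w)$, Lemma \ref{prop1-pesi} gives the sup bound, and a Young-type absorption yields $\|w\|_{L^\infty(B_R)}\le C\bigl(1+\gamma^{\frac{1}{m-1}}\bigr)$ with $\gamma=1/[(m-1)t]$, uniformly in $R$ and in the datum. The only (harmless) difference is the endgame: the paper keeps the interpolation factors $\|w\|_{L^q_\rho}^{q/r_i}$ from H\"older and removes them by letting $r_1,r_2\to\infty$, whereas you bound $\|w^p\|_{L^{m_1}_\rho}$ and $\|w\|_{L^{m_2}_\rho}$ directly by $\|\rho\|_{L^1}^{1/m_i}$ times powers of $\|w\|_{L^\infty}$ (legitimate since $\rho\in L^1$ and $w\in L^\infty$) and then solve the elementary inequality $A^m\le aA^p+b\gamma A$, a mild streamlining that does not change the substance of the argument.
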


\begin{proof}%[Proof of Theorem \ref{teo2pesi}]
We proceed as in the proof of Proposition \ref{teo2}, up to inequality \eqref{49a}. Thanks to the fact that $\rho\in L^1(\mathbb R^N)$, we can apply to \eqref{50a} the thesis of Lemma \ref{prop1-pesi}. Thus we obtain
\begin{equation}\label{eq4pesi}
\|w\|_{L^{\infty}(B_R)}^m\le C_1\|w^p\|_{L_{\rho}^{r_1}(B_R)}+\gamma C_2\|w\|_{L_{\rho}^{r_2}(B_R)}.
\end{equation}
Now the constants are
$$
\begin{aligned}
&\alpha_1=\frac{m}{p-\frac{q}{r_1}}; \\
&\alpha_2=\frac{m}{1-\frac{q}{r_2}}; \\
&\varepsilon_1 \text{ such that}\,\,\delta_1=\frac{1}{4C_1}; \\
&\varepsilon_2 \text{ such that}\,\,\delta_2=\frac{1}{4\gamma C_2}.
\end{aligned}
$$
Plugging \eqref{49} and \eqref{49a} into \eqref{eq4pesi} we obtain
\begin{equation}\label{eq5pesi}
\begin{aligned}
\|w\|_{L^{\infty}(B_R)}^{m} &\le C_1\|w^p\|_{L_{\rho}^{r_1}(B_R)}+\gamma C_2\|w\|_{L_{\rho}^{r_2}(B_R)}  \\
&\le C_1\left[\delta_1\left\|w\right\|_{L^{\infty}(B_R)}^{m} + \frac{\eta(\alpha_1)}{\delta_1^{\frac{1}{\alpha_1-1}}}\left\|w\right\|_{L_{\rho}^{q}(B_R)}^{\frac{mq}{r_1}\frac{1}{m-p+q/r_1}} \right] \\ &+ \gamma C_2 \left[ \delta_2\left\|w\right\|_{L^{\infty}(B_R)}^{m} + \frac{\eta(\alpha_2)}{\delta_2^{\frac{1}{\alpha_2-1}}}\left\|w\right\|_{L_{\rho}^{q}(B_R)}^{\frac{mq}{r_2}\frac{1}{m-1+q/r_2}} \right] .
\end{aligned}
\end{equation}
Inequality \eqref{eq5pesi} can be rewritten as
\begin{equation*}
\begin{aligned}
\|w\|_{L^{\infty}(B_R)} &\le \left[2\eta(\alpha_1)\left(4C_1^{\alpha_1}\right)^{\frac{1}{\alpha_1-1}}\right]^{\frac{1}{m}}\left\|w\right\|_{L_{\rho}^{q}(B_R)}^{\frac{q}{r_1}\frac{1}{m-p+q/r_1}}\\&+\left[2 \eta(\alpha_2)\left(4\gamma^{\alpha_2}C_2^{\alpha_2}\right)^{\frac{1}{\alpha_2-1}}\right]^{\frac{1}{m}}\left\|w\right\|_{L_{\rho}^{q}(B_R)}^{\frac{q}{r_2}\frac{1}{m-1+q/r_2}}.
\end{aligned}
\end{equation*}
Computing the limits as $r_1\longrightarrow \infty$ and $r_2\longrightarrow \infty$ we have
$$
\begin{aligned}
&\eta(\alpha_1)\longrightarrow\left[\frac{p}{m}\right]^{\frac{p}{m-p}}\left\{1-\frac{p}{m}\right\};\\
&\eta(\alpha_2)\longrightarrow\left[\frac{1}{m}\right]^{\frac{1}{m-1}}\left\{1-\frac{1}{m}\right\};\\
&\left\|w\right\|_{L_{\rho}^{q}(B_R)}^{\frac{q}{r_1}\frac{1}{(m-p+q/r_1)}} \longrightarrow 1;\\
&\left\|w\right\|_{L_{\rho}^{q}(B_R)}^{\frac{q}{r_2}\frac{1}{(m-1+q/r_2)}}\longrightarrow 1.
\end{aligned}
$$
Moreover we define
$$
\begin{aligned}
&\Gamma_1:=\left[2\left(\frac{p}{m}\right)^{\frac{p}{m-p}} \left(1-\frac{p}{m}\right)\right]^{\frac 1 m}4^{\frac{mp}{m-p}}C_1^{\frac{mp}{m-p}};\\
&\Gamma_2:=\left[2\left(\frac{1}{m}\right)^{\frac{1}{m-1}} \left(1-\frac{1}{m}\right)\right]^{\frac 1 m}4^{\frac{m}{m-1}}C_1^{\frac{m}{m-1}};\\
&C:=\max\{\Gamma_1\,\,,\Gamma_2\}
\end{aligned}
$$
and notice that, by the above construction, the thesis follows with this choice of $C$.
\end{proof}

\bigskip

\begin{proof}[Proof of Theorem \ref{teoesistenza3}] The conclusion follows by the same arguments as in the proof of Theorem \ref{teoesistenza}. However, some minor differences are in order. We replace Proposition \ref{teo2} by Proposition \ref{teo2pesi}. Moreover, since $u_0\in L^1_{\rho}(\mathbb R^N)$, the family of functions $\{u_{0h}\}$ is as follows:
\begin{equation*}
\begin{aligned}
&u_{0,h}\in L^{\infty}(\mathbb R^N)\cap C_c^{\infty}(\mathbb R^N) \,\,\,\text{for all} \,\,h\ge 0, \\
&u_{0,h}\ge 0 \,\,\,\text{for all} \,\,h\ge 0, \\
&u_{0, h_1}\leq u_{0, h_2}\,\,\,\text{for any } h_1<h_2,  \\
&u_{0,h}\longrightarrow u_0 \,\,\, \text{in}\,\, L^1_{\rho}(\mathbb R^N)\quad \textrm{ as }\, h\to +\infty\,.
\end{aligned}
\end{equation*}
Furthermore, instead of \eqref{6}, \eqref{10}, \eqref{12}, \eqref{15}, we use the following. By standard arguments  (see, e.g. proof of \cite[Proposition 2.5-(i)]{Sacks}) we have that
\[\|u^R_{h,k}(t)\|_{L^1_{\rho}(B_R)}\leq C \|u_{0h}\|_{L^1_{\rho}(B_R)}\quad \textrm{ for all}\,\, t>0\,, \]
for some positive constant $C=C(p, m, N, \|\rho\|_{L^1(\mathbb R^N)}),$ and, for any $\varepsilon\in (0, m-p)$,
\[\int_0^1\int_{B_R} (u^R_{h, k})^{p+\varepsilon}\rho(x) dx dt \leq \tilde C\,,\]
for some positive constant $\tilde C=\tilde C(p, m, N, \|\rho\|_{L^1(\mathbb R^N)}, \|u_0\|_{L^1_{\rho}(\mathbb R^N)}).$ Hence, after having passed to the limit as $k\to +\infty, R\to +\infty, h\to +\infty$,
 for any $T>0, \varphi \in C_c^{\infty}(\mathbb R^N\times(0,T))$ such that $\varphi(x,T)=0$ for every $x\in \mathbb R^N$,
we have that
\[\int_0^T\int_{\mathbb R^N} u^{p+\varepsilon}\rho(x)\varphi\, dx dt\leq C\,.\]
Therefore, \eqref{3a} holds.

\end{proof}

\subsection{End of proof of Theorem \ref{teoesistenza2}: an example of complete blowup in infinite time}\label{sec4}

We recall that we are assuming $m>1$ and $1<p<m$. Let us set $r:=|x|$. We now construct a subsolution to equation
\begin{equation}\label{91b}
\rho\, u_t= \Delta u^m +\rho\, u^p\quad  \text{in}\,\, \mathbb R^N\times (0,T)\,,
\end{equation}
under the hypothesis that there exist $k_1$ and $k_2$ with $k_2\geq k_1>0$ such that
\begin{equation}\label{95b}
k_1r^2\le \frac{1}{\rho(x)}\le k_2r^2\quad \text{for any}\,\,\,x\in\R^N\setminus B_e.
\end{equation}
Moreover, due to the running assumptions on the weight there exist positive constants $\rho_1,\rho_2$ such that
\begin{equation}\label{96b}
\rho_1\le \frac{1}{\rho(x)}\le \rho_2\quad \text{for any}\,\,\,x\in B_e\,.
\end{equation}

Let
\[\mathfrak{s}(x):=\begin{cases}
\log(|x|)  &\quad \text{if}\quad  x\in \R^N\setminus B_e, \\
& \\
\dfrac{|x|^2+e^2}{2e^2} &\quad\text{if}\quad  x\in B_e\,.
\end{cases}\]

The requested statements will follow from the following result.

\begin{proposition}\label{teosubsolutioncritical}
Let assumption \eqref{rho2}, \eqref{95b} and \eqref{96b} be satisfied, and $1<p<m.$ If the initial datum $u_0$ is smooth, compactly supported and large enough, then problem \eqref{problema2} has a solution $u(t)\in L^\infty(\mathbb R^N)$ for any $t\in (0,\infty)$ that blows up in infinite time, in the sense that
\begin{equation}\label{eq22}
\lim_{t\to+\infty}u(x, t)= +\infty \ \ \ \forall x\in {\mathbb R}^N.
\end{equation}
More precisely, if $C>0$, $a>0$, $\alpha>0$, $\beta>0$, $T>0$ verify
\begin{equation}\label{98b}
0<T^{-\beta}<\frac{a}{2}.
\end{equation}
\begin{equation}\label{alphabeta}
0<\alpha<\frac{1}{m-1}\,,\quad\quad \beta=\frac{\alpha(m-1)+1}{2}\,,
\end{equation}
and
\begin{equation*}%\label{eq23b}
u_0(x)\ge CT^{\alpha}\left[1-\frac{\mathfrak{s}(x)}{a}\,T^{-\beta}\right]^{\frac{1}{m-1}}_{+}\,, \quad \text{for any}\,\, x\in \R^N\,,
\end{equation*}
then the solution $u$ of problem \eqref{problema2} satisfies \eqref{eq22} and the bound from below
\begin{equation*} %\label{eq24b}
u(x,t) \ge C (T+t)^{\alpha}\left [1- \frac{\mathfrak{s}(x)}{a}\, (T+t)^{-\beta} \right ]_{+}^{\frac{1}{m-1}}, \,\, \text{for any}\,\, (x,t) \in \R^N\times(0,+\infty)\,.
\end{equation*}
\end{proposition}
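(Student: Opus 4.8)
The plan is to establish the pointwise lower bound of the statement by a barrier argument: exhibit the claimed right-hand side as a subsolution of the weighted equation \eqref{91b} and then invoke comparison against the approximate problems \eqref{problemapalla2} used in the proof of Theorem \ref{teoesistenza2}. Set $\zeta(t):=(T+t)^{\alpha}$, $\eta(t):=(T+t)^{-\beta}$, and
\[
\underline{u}(x,t):=C\,\zeta(t)\Big[1-\tfrac{\mathfrak{s}(x)}{a}\,\eta(t)\Big]_{+}^{\frac{1}{m-1}},\qquad (x,t)\in\R^{N}\times[0,\infty).
\]
Two preliminary remarks make $\underline{u}$ a legitimate candidate. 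First, $\mathfrak{s}$ is glued in a $C^{1}$ fashion at $|x|=e$ (one checks $\log e=\tfrac{e^{2}+e^{2}}{2e^{2}}=1$ and that the radial derivatives agree) and is smooth near the origin — this is precisely why the quadratic branch is used on $B_{e}$ — so that $\underline{u}^{m}$ is a $C^{1}$ function of $x$ across $|x|=e$. Second, since $m/(m-1)>1$, $\underline{u}^{m}$ is $C^{1}$ across the free boundary $\{\mathfrak{s}(x)=a(T+t)^{\beta}\}$ as well, with $\nabla\underline{u}^{m}$ vanishing there, so $\Delta\underline{u}^{m}$ carries no singular part and the differential inequality may be checked pointwise a.e.

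First I would verify that $\underline{u}$ is a (weak) subsolution of \eqref{91b} on $\R^{N}\times(0,\infty)$. Dividing by $\rho>0$, the key is that for a radial $f=f(\mathfrak{s})$ one has $\tfrac1\rho\Delta f(\mathfrak{s})=\tfrac1\rho\big(|\nabla\mathfrak{s}|^{2}f''(\mathfrak{s})+\Delta\mathfrak{s}\,f'(\mathfrak{s})\big)$, and since $\mathfrak{s}=\log|x|$ gives $|\nabla\mathfrak{s}|^{2}=|x|^{-2}$ and $\Delta\mathfrak{s}=(N-2)|x|^{-2}$ for $|x|>e$, while $|\nabla\mathfrak{s}|^{2},\Delta\mathfrak{s}$ are bounded on $B_{e}$, the bounds \eqref{95b}--\eqref{96b} make $\tfrac1\rho|\nabla\mathfrak{s}|^{2}$ and $\tfrac1\rho\Delta\mathfrak{s}$ bounded on $\R^{N}$ and, on $\{|x|>e\}$, pinched between two positive constants. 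Substituting the ansatz and collecting the terms carrying the powers $\phi^{1/(m-1)}$ and $\phi^{(2-m)/(m-1)}$ of $\phi:=[1-\tfrac{\mathfrak{s}}{a}\eta]_{+}$, the inequality $\underline{u}_{t}\le\tfrac1\rho\Delta\underline{u}^{m}+\underline{u}^{p}$ reduces to a one-dimensional differential inequality for the profile $\sigma\mapsto(1-\sigma)_{+}^{1/(m-1)}$, of exactly the type handled in \cite{GMPv}: it holds because (i) $\beta=\tfrac{\alpha(m-1)+1}{2}$ equalizes the relevant powers of $(T+t)$, (ii) $T^{-\beta}<a/2$ forces $aT^{\beta}>2>1$, so the free boundary stays in the region $|x|>e$ (where the effective coefficients are pinched) for all $t\ge0$ and $\phi$ stays bounded below on $B_{e}$, (iii) $0<\alpha<\tfrac1{m-1}$ and $C$ large leave room to absorb the remaining bounded terms, the reaction contribution $\underline{u}^{p}$ being of strictly lower order near the free boundary since $p>1$; the jump of $\Delta\mathfrak{s}$ at $|x|=e$ is bounded and $\nabla\mathfrak{s}$ is continuous there, so it produces only an admissible bounded term. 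This verification is the \emph{delicate} point and the main obstacle: because \eqref{95b}--\eqref{96b} are two-sided bounds rather than an exact power law, one must track the direction of every estimate and tune $C,a,T,\alpha,\beta$ so that the free-boundary term, the reaction term and the $|x|=e$ interface term are simultaneously controlled.

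Granting the subsolution property, I would transfer the bound to the solution $u$ through the approximation scheme of Theorem \ref{teoesistenza2}. Fix $R>0$ and $T_{R}>0$ so small that $\{\mathfrak{s}(x)\le a(T+T_{R})^{\beta}\}\subset B_{R}$; then $\mathrm{supp}\,\underline{u}(\cdot,t)$ stays compactly inside $B_{R}$ for $t\le T_{R}$, so $\underline{u}|_{B_{R}}$ has vanishing lateral boundary values and, being bounded there by some $M_{R}$, is a subsolution of the $k$-truncated version of \eqref{problemapalla2} once $k>M_{R}^{p}$. Choosing the approximating data so that $u_{0,h}\ge\underline{u}(\cdot,0)$ for all $h$ — permissible since $u_{0}$ is itself smooth, compactly supported and dominates $\underline{u}(\cdot,0)$, and one may even take $u_{0,h}\equiv u_{0}$ — the comparison principle for the approximate problems gives $u_{h,k}^{R}\ge\underline{u}$ on $B_{R}\times(0,T_{R})$. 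Letting in turn $k\to\infty$, $R\to\infty$ (so $T_{R}\to\infty$) and $h\to\infty$ along the monotone limits that build $u$ in Theorem \ref{teoesistenza2} yields $u\ge\underline{u}$ on all of $\R^{N}\times(0,\infty)$. Finally, for each fixed $x$ one has $[1-\tfrac{\mathfrak{s}(x)}{a}(T+t)^{-\beta}]_{+}\to1$ and $(T+t)^{\alpha}\to+\infty$ as $t\to+\infty$, hence $\underline{u}(x,t)\to+\infty$ and therefore $u(x,t)\to+\infty$; the fact that $u(t)\in L^{\infty}(\R^{N})$ for every $t>0$ is already contained in Theorem \ref{teoesistenza2}. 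This proves \eqref{eq22} together with the stated lower bound.
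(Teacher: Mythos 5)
Your plan coincides with the paper's own proof: the same two--piece barrier built from $\mathfrak{s}$, the same exponents \eqref{alphabeta}, the same use of \eqref{98b} to keep the free boundary outside $B_e$ and to guarantee $\tfrac12<G<1$ there (the paper's \eqref{99}), the $C^1$ matching of $w^m$ at $|x|=e$ and at the free boundary, and finally comparison with the truncated Dirichlet problems on balls (a step the paper leaves implicit but which is standard, and which you handle correctly, including taking $k>M_R^p$ so that $T_k(\underline{u}^p)=\underline{u}^p$ and letting $k,R,h\to\infty$ along the monotone construction).

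The one substantive weakness sits exactly where the paper's proof has all of its content: the verification of the differential inequality outside $B_e$. There the inequality reduces to $\varphi(F):=\sigma(t)F-\delta(t)-\gamma(t)F^{\frac{p+m-2}{m-1}}\le0$ for $F\in(0,1)$, with $\sigma\sim\frac{C^{m-1}}{a}k_2(N-2)(T+t)^{m\alpha-\beta}$, $\delta\sim\frac{C^{m-1}}{a^2}k_1(T+t)^{m\alpha-2\beta}$, $\gamma=C^{p-1}(T+t)^{p\alpha}$, closed by maximizing $\varphi$ in $F$ and imposing the two conditions \eqref{107}. Your proposed mechanism, ``take $C$ large to absorb the remaining terms'', goes in the wrong direction for the first of these conditions: its left-hand side scales like $C^{(m-1)\frac{p+m-2}{p-1}}$ while its right-hand side scales like $C^{2(m-1)}$, and $\frac{p+m-2}{p-1}>2$ precisely because $p<m$, so enlarging $C$ with $a$ and $T$ fixed eventually violates it. The tuning that actually works (as in the paper and in \cite{GMPv}) bounds the ratio $C^{m-1}/a$ from below so that $\delta>0$, uses the exact balance $m\alpha-2\beta=\alpha-1$ coming from $\beta=\frac{\alpha(m-1)+1}{2}$, and exploits that the time exponent of $\delta\,\gamma^{\frac{m-1}{p-1}}$ exceeds that of $\sigma^{\frac{p+m-2}{p-1}}$ by $\frac{(m-p)\left(1-\alpha(m-1)\right)}{2(p-1)}>0$, so largeness is taken in $T$ (consistently with the required largeness of $u_0$), not in $C$ alone. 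Note also that the reaction term is not a lower-order nuisance globally: since $m\alpha-\beta>\alpha-1$, one has $\sigma\gg\delta$ for large times, so $\varphi\le0$ would be false without the term $-\gamma F^{\frac{p+m-2}{m-1}}$; it is the decisive good term away from the free boundary, via the second condition in \eqref{107}. Once this tuning is spelled out, your argument is complete and identical in substance to the paper's.
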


\begin{proof}%[Proof of Proposition \ref{teosubsolutioncritical}]
We construct a suitable subsolution of \eqref{91b}. Define, for all $(x,t)\in \R^N$,
\begin{equation*}%\label{w}
w(x,t)\equiv w(r(x),t) :=
\begin{cases}
 u(x,t) \quad \text{in } [\R^N \setminus B_{e}] \times (0,T), \\
v(x,t) \quad \text{in } B_{e} \times (0,T),
\end{cases}
\end{equation*}
where
\begin{equation*}
{{u}}(x,t)\equiv {u}(r(x),t):=C(T+t)^{\alpha}\left [1-\frac{\log(r)}{a}(T+t)^{-\beta}\right]_{+}^{\frac{1}{m-1}},
%\label{subcritical}
\end{equation*}
and
\begin{equation*}
v(x,t) \equiv v(r(x),t):= C(T+t)^{\alpha} \left [ 1-\frac{r^2+e^2}{2e^2} \frac{(T+t)^{-\beta}}{a} \right ]^{\frac{1}{m-1}}_{+}\,.
%\label{vcritical}
\end{equation*}
Moreover, let
$$
F(r,t):= 1-\frac{\log(r)}{a}(T+t)^{-\beta}\,,
$$
$$
G(r,t):= 1-\frac{r^2+e^2}{2e^2}\frac{(T+t)^{-\beta}}{a}\,.
$$
and define
$$
\textit{D}_1:=\left \{ (x,t) \in (\R^N \setminus B_e) \times (0,T)\,\, |\,\, 0<F(r,t)<1 \right \}.
$$
For any $(x,t) \in D_1$, we have:
\begin{equation*}
\begin{aligned}
{u}_t
=C\alpha(T+t)^{\alpha-1}  F^{\frac{1}{m-1}} - C\beta(T+t)^{\alpha-1} \frac{1}{m-1}F^{\frac{1}{m-1}} + C\beta(T+t)^{\alpha-1} \frac{1}{m-1} F^{\frac{1}{m-1}-1}. \\
\end{aligned}
%\label{100}
\end{equation*}
\begin{equation*}
({u}^m)_r=-\frac{C^m}{a} (T+t)^{m\alpha} \frac{m}{m-1} F^{\frac{1}{m-1}} \frac{1}{r}(T+t)^{-\beta}.
%\label{101}
\end{equation*}
\begin{equation*}
\begin{aligned}
({u}^m)_{rr}&=\frac{C^m}{a} (T+t)^{m\alpha} \frac{m}{m-1} F^{\frac{1}{m-1}}  \frac{(T+t)^{-\beta}}{r^2}\\&+ \frac{C^m}{a^2} (T+t)^{m\alpha} \frac{m}{(m-1)^2} F^{\frac{1}{m-1}-1} \frac{(T+t)^{-2\beta}}{r^2}.
\end{aligned}
%\label{102}
\end{equation*}
Due to \eqref{98b}
$$0<G(r,t)<1 \quad \text{for all}\,\,\,(x,t)\in B_e\times(0,+\infty).$$ For any $(x,t) \in B_e \times (0,T)$, we have:
\begin{equation*}
\begin{aligned}
v_t
=C\alpha(T+t)^{\alpha-1}G^{\frac{1}{m-1}} - C\beta(T+t)^{\alpha-1} \frac{1}{m-1} G^{\frac{1}{m-1}} + C\beta(T+t)^{\alpha-1} \frac{1}{m-1}G^{\frac{1}{m-1}-1}\,.
\end{aligned}
%\label{A}
\end{equation*}
\begin{equation*}
(v^m)_r=-\frac{C^m}{a} (T+t)^{m\alpha} \frac{m}{m-1} G^{\frac{1}{m-1}} \frac{r}{e^2}(T+t)^{-\beta}\,.
%\label{B}
\end{equation*}
\begin{equation*}
(v^m)_{rr}=-\frac{C^m}{a} (T+t)^{m\alpha} \frac{m}{m-1} G^{\frac{1}{m-1}} \frac {(T+t)^{-\beta}}{e^2}+ \frac{C^m}{a^2} (T+t)^{m\alpha} \frac{m}{(m-1)^2} G^{\frac{1}{m-1}-1} (T+t)^{-2\beta}\frac{r^2}{e^4}\,.
%\label{C}
\end{equation*}
%%%%%%%%%%%

For every $(x,t)\in D_1$, by the previous computations we have
\begin{equation}\label{104}
\begin{aligned}
u_t-&\frac{1}{\rho}\Delta u^m-u^p\\
&=C\alpha(T+t)^{\alpha-1}  F^{\frac{1}{m-1}} - C\beta(T+t)^{\alpha-1} \frac{1}{m-1}F^{\frac{1}{m-1}}+ C\beta(T+t)^{\alpha-1} \frac{1}{m-1} F^{\frac{1}{m-1}-1}\\
&+\frac{1}{\rho}\left\{-\frac{C^m}{a} (T+t)^{m\alpha-\beta} \frac{m}{m-1} F^{\frac{1}{m-1}}\frac{1}{r^2}- \frac{C^m}{a^2} (T+t)^{m\alpha-2\beta} \frac{m}{(m-1)^2} F^{\frac{1}{m-1}-1} \frac{1}{r^2}\right.\\
&\left. +(N-1)\frac{C^m}{a} (T+t)^{m\alpha-\beta} \frac{m}{m-1} F^{\frac{1}{m-1}} \frac{1}{r^2}\right\} -C^p (T+t)^{p\alpha}{F^{\frac p{m-1}}}.
\end{aligned}
\end{equation}
Thanks to \eqref{95b}, \eqref{104} becomes, for every $(x,t)\in D_1$
\begin{equation*}%\label{105}
\begin{aligned}
u_t-&\frac{1}{\rho}\Delta u^m-u^p\\
&\le CF^{\frac{1}{m-1}-1}\left\{F\left[\alpha(T+t)^{\alpha-1}-\frac{\beta}{m-1}(T+t)^{\alpha-1}+(N-2)k_2\frac{C^{m-1}}{a}\frac{m}{m-1}(T+t)^{m\alpha-\beta}\right]\right.\\
&\left.+\frac{\beta}{m-1}(T+t)^{\alpha-1}-\frac{C^{m-1}}{a^2}\frac{m}{(m-1)^2}k_1(T+t)^{m\alpha-2\beta}-C^{p-1}(T+t)^{p\alpha}F^{\frac{p+m-2}{m-1}}\right\}\\
& \leq CF^{\frac{1}{m-1}-1}\left\{\sigma(t)F-\delta(t)-\gamma(t)F^{\frac{p+m-2}{m-1}}\right\}
\end{aligned}
\end{equation*}
where
\begin{equation*}%\label{fi}
\varphi(F):=\sigma(t)F-\delta(t)-\gamma(t)F^{\frac{p+m-2}{m-1}},
\end{equation*}
with
\begin{equation*}
\begin{aligned}
& \sigma(t) = \left [ \alpha-\frac{\beta}{m-1}\right](T+t)^{\alpha-1}+\frac{C^{m-1}}{a} \frac{m}{m-1}k_2\left(N-2\right)(T+t)^{m\alpha-\beta}\,,\\
& \delta(t) = -\frac{\beta}{m-1} (T+t)^{\alpha-1}+ \frac{C^{m-1}}{a^2} \frac{m}{(m-1)^2}k_1(T+t)^{m\alpha-2\beta}\,,  \\
& \gamma(t)=C^{p-1} (T+t)^{p\alpha }\,, \\
\end{aligned}
\end{equation*}

Our goal is to find suitable $C>0$, $a>0$, such that
$$
\varphi(F) \le 0\,, \quad \text{for all}\,\,  F \in (0,1)\,.
$$
To this aim, we impose that
$$
\sup_{F\in (0,1)}\varphi(F)=\max_{F\in (0,1)}\varphi(F)= \varphi (F_0)\leq 0\,,
$$
for some $F_0\in(0,1)$. We have
$$
\begin{aligned}
\frac{d \varphi}{dF}=0
&\iff \sigma(t) - \frac{p+m-2}{m-1} \gamma(t) F^{\frac{p-1}{m-1}} =0 \\ & \iff F_0= \left [\frac{m-1}{p+m-2} \frac{\sigma(t)}{\gamma(t)} \right ]^{\frac{m-1}{p-1}}\,.
\end{aligned}
$$
Then
$$
\varphi(F_0)= K \dfrac{\sigma(t)^{\frac{p+m-2}{p-1}}}{\gamma(t)^{\frac{m-1}{p-1}}} - \delta(t)
$$
where $K=\left(\frac{m-1}{p+m-2}\right)^{\frac{m-1}{p-1}}-\left(\frac{m-1}{p+m-2}\right)^{\frac{p+m-2}{p-1}}>0 $. The two conditions we must verify are
\begin{equation}\label{107}
\begin{aligned}
K[\sigma(t)]^{\frac{p+m-2}{p-1}} \le \delta(t) \gamma(t)^{\frac{m-1}{p-1}}\,,\ \
(m-1) \sigma(t) \le (p+m-2) \gamma(t) \,.
\end{aligned}
\end{equation}
Observe that, thanks to the choice in \eqref{alphabeta} and by choosing $$\frac{C^{m-1}}{a}  \ge 2\beta\,\frac{(m-1)}{m}\frac{1}{k_1},$$ we have
$$
\begin{aligned}
& \sigma(t)\le \frac{C^{m-1}}{a} \frac{m}{m-1}k_2\left(N-2\right)(T+t)^{m\alpha-\beta}\,,\\
& \delta(t)\ge  \frac{C^{m-1}}{2a^2} \frac{m}{(m-1)^2}k_1(T+t)^{m\alpha-2\beta}\,
\end{aligned}
$$
and conditions in \eqref{107} follow. So far, we have proved that
\begin{equation*}%\label{eq51}
{u}_t-\frac{1}{\rho(x)}\Delta({u}^m)-{u}^p \le 0 \quad \text{ in } D_1\,.
\end{equation*}
Furthermore, since ${u}^m\in C^1([\R^N\setminus B_e]\times[0,T))$ %, due to Lemma \ref{lemext} (applied with $\Omega_1=D_1, \Omega_2=\R^N\setminus[B_e\cup D_1], u_1= u, u_2=0, u= u$),
it follows that $ u$ is a subsolution to equation \eqref{91b} in $ [\R^N \setminus B_e]\times (0,T)$.
Now, we consider equation \eqref{91b} in $B_e\times(0,T)$. We observe that, due to condition \eqref{98b},
\begin{equation}\label{99}
\frac{1}{2}<G<1\,\,\,\text{for all}\,\,(x,t)\in B_e\times(0,T).
\end{equation}
Similarly to the previous computation we obtain, for all $(x,t)\in B_e\times(0,T)$:
\begin{equation*}%\label{110}
v_t-\frac{1}{\rho}\Delta v^m-v^p\le CG^{\frac{1}{m-1}-1}\psi(G)\,,
\end{equation*}
where
$$
\psi(G):=\sigma_0G-\delta_0-\gamma G^{\frac{p+m-2}{m-1}}\,,
$$
with
\begin{equation*}
\begin{aligned}
&  \sigma_0(t) = \left [ \alpha-\frac{\beta}{m-1}\right](T+t)^{\alpha-1}+\rho_2\frac{N}{e^2} \frac{m}{m-1}\frac{C^{m-1}}{a} (T+t)^{m\alpha-\beta} \,,\\
&  \delta_0(t) = -\frac{\beta}{m-1} (T+t)^{\alpha-1}\,\\
& \gamma(t)=C^{p-1} (T+t)^{p\alpha }\,.
\end{aligned}
\end{equation*}

Due to \eqref{99}, $v$ is a subsolution of \eqref{91b} for every $(x,t)\in B_e\times(0,T)$, if
$$
2^{\frac{p+m-2}{m-1}}\left(\sigma_0-\delta_0\right)\le\gamma\,.
$$
This last inequality is always verified thanks to \eqref{alphabeta}. Hence we have proved that
\begin{equation*}%\label{eq60}
v_t-\frac{1}{\rho(x)}\Delta(v^m)-v^p \le 0 \quad \text{ in }\,\, B_e\times(0,T)\,,
\end{equation*}
Moreover, $w^m \in C^1(\R^N \times [0,T))$, indeed,
$$
({u}^m)_r = (v^m)_r  = -C^m \zeta(t)^m \frac{m}{m-1} \frac{1}{e}\frac{\eta(t)}{a} \left [ 1- \frac{\eta(t)}{a} \right ]_+^{\frac{1}{m-1}} \quad \text{in}\,\, \partial B_e\times (0,T)\,.
$$
Hence,
$w$ is a subsolution to equation \eqref{91b} in $\R^N\times(0,T)$.

\end{proof}

\par\bigskip\noindent
\textbf{Acknowledgments. } The first and third author are partially supported by the PRIN project 201758MTR2 ``Direct and inverse problems for partial differential equations: theoretical aspects and applications'' (Italy). All authors are members of the Gruppo Nazionale per l'Analisi Ma\-te\-ma\-ti\-ca, la Probabilit\`a e le loro Applicazioni (GNAMPA) of the Istituto Nazionale di Alta Ma\-te\-ma\-ti\-ca (INdAM). The third author is partially supported by GNAMPA Projects 2019, 2020.

%
% ---- Bibliography ----
%
\bigskip
\bigskip
\bigskip

%\newpage
%   \bibliographystyle{abbrv}
%  \bibliography{bibliotesi}
%  \addcontentsline{toc}{chapter}{Bibliography}
%  \clearpage{\pagestyle{empty}\cleardoublepage}
%
%%\begin{thebibliography}{5}
%%

%\end{thebibliography}

\end{document}